\documentclass[reqno,11pt]{amsart}
\usepackage{amssymb, amsmath,amsmath,latexsym,amssymb,amsfonts,amsbsy, amsthm,mathtools,graphicx,CJKutf8,CJKnumb,CJKulem,color,cases,empheq,extpfeil,bm}

\renewcommand{\theequation}{\thesection.\arabic{equation}}
\numberwithin{equation}{section}

\allowdisplaybreaks

\newtheorem{remark}{Remark}[section]
\newtheorem{lemma}{Lemma}[section]
\newtheorem{definition}{Definition}[section]
\newtheorem{corollary}{Corollary}[section]
\newtheorem{proposition}{Proposition}[section]
\newtheorem{theorem}{Theorem}[section]

\begin{document}


\title[Strong solutions to a full non-Newtonian fluid with far field vacuum]
{Global strong solutions to a one-dimensional full non-Newtonian fluid with far field vacuum}
\author[Fang]{Li Fang}
\address{Li Fang $\newline$ School of Mathematics and CNS, Northwest University, Xi'an, P. R. China}
\email{fangli@nwu.edu.cn}

\author[Wang]{Yu Wang}
\address{Yu Wang $\newline$ School of Mathematics and CNS, Northwest University, Xi'an, P. R. China}
\email{WYbbdwy@163.com}

\author[Zang]{Aibin Zang}
\address{Aibin Zang $\newline$ School of Artificial Intelligence and Information Engineering \& The Center of Applied Mathematics, Yichun university, Yichun, Jiangxi, P. R. China}
\email{abzang@jxycu.edu.cn}


\begin{abstract}
In this paper, the Cauchy problem for a one-dimensional heat conducting compressible non-Newtonian fluid is considered. The constitute equation of the non-Newtonian fluid is determined by two nonlinear terms $(|u_x|^{q-2}u_x)_x$ and $(|\theta_x|^{p-2}\theta_x)_x$ with $1<p,q<2.$ When the vacuum occurs at the far field, the local and global existence of strong solutions are established for the Cauchy problem.
The results indicate that the non-Newtonian fluid possesses time-dependence boundedness of the energy if the vacuum occurs at the far field and the density decays slowly at the far field as time goes to infinity. This is the key difference from the well-known result developed by Li and Xin (Advances in Mathematics 361(2020), 106923) for the one-dimensional heat conductive compressible Navier-Stokes system.
\end{abstract}


\maketitle

\noindent {\sl Keywords\/}:  Compressible full non-Newtonian fluid; Strong solution; Global existence; Far field vacuum
\vskip 0.2cm

\noindent {\sl AMS Subject Classification} (2020): 35Q35; 35A01; 76A05 \\

\renewcommand{\theequation}{\thesection.\arabic{equation}}
\setcounter{equation}{0}

\section{Introduction and Main results}\label{S1}

\subsection{Introduction}

The non-Newtonian fluid has been studied either for applications in drag redacting or theoretical analysis see\cite{Figueredo-2003,Mowla-2006} (and therein references). Mathematical theory on non-Newtonian fluids goes back to Ladyzhenskaya (see \cite{Ladyzhenskaya-1967, Ladyzhenskaya-1969,Ladyzhenskaya-1970}). It is well-known that the mathematical model of a compressible nonhomogeneous non-Newtonian heat-conducting fluid governed by the following system of equations (see \cite{Bartlomiej-Aneta-2018})
 \begin{align}
&\partial_t\rho+\mbox{div}_{\bf x}(\rho {\bf u})=0\mbox{ in }{\mathbb R}^d,\label{E0.1}\\
&\partial_t(\rho {\bf u})+\mbox{div}_{\bf x}(\rho {\bf u}\otimes {\bf u})-\mbox{div}_{\bf x}({\mathbb S}(x,\rho,\theta,{\mathbb D}{\bf u})+\nabla_{x}p=\rho{\bf f}\mbox{ in }{\mathbb R}^d,\label{E0.2}\\
&\partial_t(\rho\theta)+\mbox{div}_{\bf x}(\rho{\bf u}\theta)-\mbox{div}_{\bf x}{\bf q}(\rho,\theta,\nabla_{ x}\theta)={\mathbb S}(x,\rho,\theta,{\mathbb D}{\bf u}):{\mathbb D}{\bf u}\mbox{ in }{\mathbb R}^d,\label{E0.3}\\
&{\bf u}(0,x)={\bf u}_0\mbox{ in }{\mathbb R}^d,\label{E0.4}\\
&\rho(0,x)=\rho_0\mbox{ in }{\mathbb R}^d,\label{E0.5}
\end{align}
where the unknown function $\rho:{\mathbb R}^d\rightarrow[0,+\infty)$ denotes the fluid density, ${\bf u}:{\mathbb R}^d\rightarrow{\mathbb R}^3$ stands for the fluid velocity, $p:{\mathbb R}^d\rightarrow[0,+\infty)$ is a pressure function, 
${\bf f}:{\mathbb R}^d\rightarrow{\mathbb R}^3$ is a given outer force. $\theta\geqslant 0$ denotes the temperature. Here, 
${\mathbb S}{\bf u}-p(\rho){\mathbb I}$ is the total stress tensor. The viscous stress tensor ${\mathbb S}{\bf u}$ characterizes the measure of resistance of the fluid to flow. In accordance with the principle of material frame indifference, the viscous stress tensor ${\mathbb S}{\bf u}$ depends on the strain tensor ${\mathbb D}{\bf u}~(=\frac12(\nabla_x {\bf u}+\nabla_x{\bf u}^t)),$ which is a symmetric part of the velocity gradient.

In order to formulate the growth conditions of the stress tensor, we assume that stress tensor ${\mathbb S}{\bf u}=|{\mathbb D}{\bf u}|^{q-2}{\mathbb D}{\bf u}$ with $q>1.$ The heat flux ${\bf q},$ as usually, is set to be less general. Similarly as in \cite{Yuan-Si-Feng-2019}, we expect ${\bf q}$ to be the form
$${\bf q}(\rho,\theta,\nabla_x\theta)=|\nabla_x \theta|^{p-2}\nabla_x \theta$$
for some $p>1.$ In order to close the system (\ref{E0.1})-(\ref{E0.3}), some constitutive equations are complemented. We assume that the pressure $p$ takes the form
$$p=R\rho \theta$$
for a positive constant $R.$

The main reason to investigate such form of stress tensor ${\mathbb S}{\bf u}$, is the phenomena of rapidly increasing fluid viscosity under various stimuli as shear rate. Our assumptions is power-law models which are quite popular among rheologists, chemical engineering and colloidal mechanics. 


The initial model studied by Ladyzhenskaya \cite{Ladyzhenskaya-1967}  is incompressible, which the system (\ref{E0.1})-(\ref{E0.3}) without the temperature fluctuations  becomes
 \begin{align}
&\partial_t{\bf u}+\mbox{div}_{\bf x}({\bf u}\otimes {\bf u})+\nabla_{\bf x}\pi=\mbox{div}_{\bf x}({\mathbb S}{\bf u}),\label{1dlg-E1.6}\\
&\mbox{div}_{\bf x}{\bf u}=0,\label{1dlg-E1.7}
\end{align}
where the stress tensor ${\mathbb S}{\bf u}$ depends on the strain tensor ${\mathbb D}{\bf u}$ in a non-linear way. Ladyzhenskaya selected the following monotone vector field
\begin{align}
{\mathbb S}{\bf u}=\nu_1 {\mathbb D}{\bf u}+\nu_2|\mathbb{D}{\bf u}|^{q-2}{\mathbb D}{\bf u}\label{1dlg-E1.8}
\end{align}
as constitutive relation, where constants $q,\nu_1,\nu_2$ satisfy $q>1,$ $\nu_1\geqslant 0$ and $\nu_2>0.$ The basic analysis of the system (\ref{1dlg-E1.6})-(\ref{1dlg-E1.8}) was given by Ladyzhenskaya \cite{Ladyzhenskaya-1967} and Lions (see also \cite{Lions-1969}). In recent  years, considerable attention was paid to the mathematical model of incompressible non-Newtonian fluid, such as behavior and rapidly grew-up of solutions.
Bellout, Bloom and Ne\v{c}as \cite{Bellout-1994} proved that there exist Young-measure-valued solutions to the incompressible non-Newtonian fluids for space periodic problems under some conditions. Zang obtained the global existence of the models in the half space with slip boundary conditions in \cite{Zang-2018}, under the suitable assumption on the initial velocity. Ne\v{c}asov\'{a} and Penel \cite{Necasova-2001} proved $L^2$-decay for weak solution to an incompressible non-Newtonian fluid in whole space under some assumptions. Guo and Zhu \cite{Guo-Zhu-2002} investigated the partial regularity of the weak solutions to the generalized Navier-Stokes equations which describe the dynamics of the incompressible monopolar non-Newtonian fluids. For more results on the incompressible non-Newton fluids, we can refer the  monographs \cite{Chhabra-2008, Malek-1997}, papers \cite{Bellout-1994,Bohme-1987,Diening-2010,Guo-Zhu-2002,Guo-Shang-2006,Zhikov-2009,Zang-2018} and therein references.

When the temperature fluctuations can be neglected, the system (\ref{E0.1})-(\ref{E0.3}) becomes the isentropic compressible non-Newtonian fluid
\begin{align}\label{1dlg-E1.9}
\begin{cases}
\partial_t\rho+\mbox{div}_{\bf x}(\rho {\bf u})=0,\\
\partial_t(\rho {\bf u})+\mbox{div}_{\bf x}(\rho {\bf u}\otimes {\bf u})+\nabla_{\bf x}p(\rho)=\mbox{div}_{\bf x}({\mathbb S}{\bf u}),
\end{cases}
\end{align}
where the relation between ${\mathbb S}{\bf u}$  and  ${\mathbb D}{\bf u}$ is nonlinear.
For the weak solution, the measure-valued solutions to the equations (\ref{1dlg-E1.9}) was obtained in \cite{Necasova-1993}. Later, Mamontov \cite{Mamontov-1999,Mamontov-1999-1} illustrated the global existence of  weak solution to the equations (\ref{1dlg-E1.9}) under the assumptions of an exponentially growing viscosity and isothermal pressure. Recently, Abbatiello, Feireisl and Novot\'{n}y proved the existence of so-called dissipative solution in \cite{Abbatiello-2020}. For the existence of the strong solution, the local-in-time existence were established for the absence of vacuum in \cite{Kalousek-Macha-Necasova-2020}. Fang and Guo discussed the analytical solution to a class of non-Newtonian fluids with free boundary in \cite{Fang-Guo-2012}. Xu and Yuan \cite{Yuan-Xu-2008} proved the local-in-time existence in one space dimension with singularity and vacuum. When the initial energy is small, Yuan, Si and Feng \cite{Yuan-Si-Feng-2019} established the global well-posedness of strong solutions for the initial-boundary-value problems of the one-dimensional model (\ref{1dlg-E1.9}).

\subsection{Methodology and main Results}
To our knowledge, there are few results about the global well-posedness for the Cauchy problem of the heat conducting compressible non-Newtonian fluid. The current theme of this article is to investigate the mathematical model of a compressible nonhomogeneous non-Newtonian heat-conducting fluid, which is governed by the following system of equations
\begin{eqnarray}
&& \rho_t+(\rho u)_x=0,\label{1dlg-E1.1} \\
&&(\rho u)_t+(\rho u^2)_x-(|u_x|^{q-2}u_x)_x+(R\rho\theta )_x=0, \label{1dlg-E1.2}\\
&&(\rho \theta )_t+(\rho u \theta)_x-(|\theta_x|^{p-2}\theta_x)_x+R\rho\theta u_x=|u_x|^{q}.\label{1dlg-E1.3}
    \end{eqnarray}
Recently, the entropy bounded solutions to a one-dimensional compressible Navier-Stokes equations with zero heat conduction and far field vacuum was established by Li and Xin \cite{Li-Xin-2020}. Inspired by the ideas in \cite{Li-Xin-2020,Fang-Zang-2023}, we intend to find new phenomena of full compressible non-Newtonian fluid and investigate the existence of strong solutions to the system (\ref{1dlg-E1.1})-(\ref{1dlg-E1.3}) with $p,q\in (1,2)$ in the presence of vacuum at far field.

Recall the flow map $\eta(y, t)$ governed by $u$ as follows
\begin{equation*}
    \left\{
    \begin{array}{ll}
    \eta_t(y,t)=u(\eta(y,t),t), \\
    \eta(y,0)=y.
    \end{array}
    \right.
\end{equation*}
Denote $\varrho(y,t):= \rho(\eta(y,t),t),~v(y,t):= u(\eta(y,t),t),~\Theta(y,t):= \theta(\eta(y,t),t).$
Setting
\begin{equation*}
 J=J(y,t)=\eta_y(y,t),
    \end{equation*}
one finds that
\begin{equation*}
 J_t=v_y.
    \end{equation*}
Let $\varrho|_{t=0}=\varrho_0$ and $J|_{t=0}=J_0.$  It is easy to get that
\begin{equation}\label{1dlg-E16}
 J\varrho=J_0\varrho_0.
    \end{equation}
Now, the system (\ref{1dlg-E1.1})-(\ref{1dlg-E1.3}) is transformed to the following in the Lagrangian coordinate
\begin{eqnarray}
&& J_t=v_y,\label{1dlg-E13} \\
&&\varrho_0 v_t-\frac{1}{J_0}\left(|\frac{v_y}J|^{q-2}\frac{v_y}J\right)_y+R\frac{1}{J_0}(\varrho\Theta)_y=0,\label{1dlg-E14} \\
&&\varrho_0\Theta_t-\frac{1}{J_0}\left(|\frac{\Theta_y}J|^{p-2}\frac{\Theta_y}J\right)_y+\frac{1}{J_0}R\varrho\Theta v_y=\frac{J}{J_0}|\frac{v_y}J|^{q},\label{1dlg-E15}
    \end{eqnarray}
which is completed with the initial condition
\begin{equation}
( J,\varrho,v,\Theta)=(J_0,\varrho_0,v_0,\Theta_0)\label{1dlg-E17}.
\end{equation}
Here, $J_0$ has uniform positive lower and upper bounds. According to the definition of $J,$ the initial $J_0$ could be assumed as the identical one.
However, in order to extend a local solution $(J,\varrho, v,\Theta)$ to be a global one, we find the local existence of solutions to the problem (\ref{1dlg-E16})-(\ref{1dlg-E17}) with initial $J_0$ not being identical one (the details can be found in Proposition \ref{5dlg-P5.2} and the proof of Theorem \ref{1dlg-thm2}).

Before stating our main results, we explain the notations and conventions used throughout this paper. Set
$$\int fdx\triangleq \int_{\mathbb R} fdx.$$
Moreover, for $q\in[1,\infty]$ and a positive integer $k\geqslant 1,$ the standard homogeneous Sobolev spaces are defined as follows
\begin{eqnarray*}
&L^q=L^2({\mathbb R}),~W^{k,q}=W^{k,q}({\mathbb R}),~H^{k}=H^{k,2}({\mathbb R}).
\end{eqnarray*}

Now, we give the definitions of local strong solution and global strong solution to the problem (\ref{1dlg-E16})-(\ref{1dlg-E17}) as follows.

\begin{definition}({Local strong solution})\label{1dlg-D1}
Let $1<p, q<2 $ and $\alpha<\min\{-\frac{q}{2(q-1)},-\frac{4-p}{2-p}\}.$ For a given positive time $T \in (0, \infty)$, a quaternion $(J,\varrho,v,\Theta)$ is called a strong solution to the problem (\ref{1dlg-E16})-(\ref{1dlg-E17}) on $\mathbb R \times [0,T]$, if it has the properties that
\begin{eqnarray}
&& \inf\limits_{y\in\mathbb R, t\in[0,T]}J(y,t)>0,\quad \varrho\geqslant0\  on\ \mathbb R \times [0,T],  \notag\\
&&J-J_0\in C([0,T];L^2),\quad J_t\in L^\infty(0,T;L^q)\cap L^\infty(0,T;L^2), \notag\\
&&\varrho_0^{\frac\alpha 2}J_y\in L^\infty(0,T;L^2),\quad\varrho_0^{\frac{\alpha+1} 2}v\in C([0,T];L^2),\quad\varrho_0^{\frac{\alpha+2} 2}\Theta\in C([0,T];L^2)\notag\\
&&\varrho_0^{\frac{\alpha+1} 2}v_y\in L^\infty(0,T;L^2),\quad\varrho_0^{\frac{\alpha+1} 2}\Theta_y\in L^\infty(0,T;L^2), \notag\\
&&\varrho_0^{\frac{\alpha+1} 2}v_t\in L^2(0,T;L^2),\quad\varrho_0^{\frac{\alpha+3} 2}\Theta_t\in L^2(0,T;L^2), \notag\\
&&\varrho_0^{\frac\alpha q}|\frac {v_y} J|J^\frac 1 q\in L^\infty(0,T;L^q)\cap L^q(0,T;L^q),\quad\varrho_0^{\frac\alpha p}|\frac {\Theta_y} J|J^\frac 1 p\in L^\infty(0,T;L^p)\cap L^p(0,T;L^p),\notag\\
&&\varrho_0^{\frac\alpha 2}|\frac {v_y} J|^{\frac{q-2}2}\frac {v_{yy}}{\sqrt J}\in L^2(0,T;L^2),\quad\varrho_0^{\frac\alpha 2}|\frac {\Theta_y} J|^{\frac{p-2}2}\frac {\Theta_{yy}}{\sqrt J}\in L^2(0,T;L^2);\notag
    \end{eqnarray}
the quaternion $(J,\varrho,v,\Theta)$ satisfies the system (\ref{1dlg-E16})-(\ref{1dlg-E15}) a.e. in $\mathbb R \times [0,T] $ and fulfills the initial condition (\ref{1dlg-E17}).
\end{definition}

\begin{definition}({Global strong solution})\label{1dlg-D1.2}
A quaternion $(J,\varrho,v,\Theta)$ is called a global strong solution to the problem (\ref{1dlg-E16})-(\ref{1dlg-E17}), if it is a strong solution to the problem (\ref{1dlg-E16})-(\ref{1dlg-E17}) on $\mathbb R \times [0,T] $ for any positive time $T\in(0,\ \infty).$
\end{definition}

Our first main result focuses on the existence of a local strong solution to the problem (\ref{1dlg-E16})-(\ref{1dlg-E17}), which is stated as follows.

\begin{theorem}(Local well-posedness)\label{1dlg-thm1}
Let $p,q\in(1,2)$ and $\alpha<\min\{-\frac{q}{2(q-1)},-\frac{4-p}{2-p}\}.$ Assume that the initial data $(J_0,\varrho_0,v_0,\Theta_0)$ satisfies
\begin{equation}
 \inf\limits_{y\in(-r, r)}\varrho_0(y)>0\ (\forall r\in (0, \infty)),\quad \varrho_0\leqslant \overline\varrho \ on \ {\mathbb R}\mbox{ with } \overline{\varrho}\geqslant1 ,\quad(\varrho_0^\alpha)_y\in L^p\cap L^\infty,\tag{H1}
    \end{equation}
 \begin{equation}
\underline J\leqslant J_0\leqslant\overline J\ on \ \mathbb R,\quad \varrho_0^{\frac{\alpha}{ 2}}J_0^{\prime}\in L^2,\quad \varrho_0^{\alpha}J_0^{\prime}\in L^1,\tag{H2}
    \end{equation}
\begin{equation}
\varrho_0^{\frac{\alpha+1} 2}v_0\in L^2,\quad\varrho_0^{\frac{\alpha+1} 2}v_0^{\prime}\in L^2,\quad\varrho_0^{\frac\alpha q}|\frac{v_0^{\prime}}{J_0}|\in L^q,\tag{H3}
    \end{equation}
\begin{equation}
\varrho_0^{\frac{\alpha+2} 2}\Theta_0\in L^2,\quad\varrho_0^{\frac{\alpha+1} 2}\Theta_0^{\prime}\in L^2,\quad\varrho_0^{\alpha+1}\Theta_0\in L^2,\quad\varrho_0^{\frac{\alpha+2} p}|\frac{\Theta_0^{\prime}}{J_0}|\in L^p,\tag{H4}
    \end{equation}
for positive constants $\overline{\varrho},$ $\overline {J}$ and $\underline{ J}.$ Then there exists a positive time $T$, depending only on $p,q,\underline J,$ $\overline J,$ $\overline\varrho,$ $\|\varrho_0^{\frac\alpha 2}J_0^{\prime}\|_{L^2},\|(\varrho_o^\alpha)_y\|_{L^p},\|(\varrho_o^\alpha)_y\|_{L^\infty}$, such that the problem (\ref{1dlg-E16})-(\ref{1dlg-E17}) admits a unique strong solution $(J,\varrho,v,\Theta)$ on  $\mathbb R \times [0,T] $.
\end{theorem}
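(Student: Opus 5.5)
The plan follows Li and Xin \cite{Li-Xin-2020} and \cite{Fang-Zang-2023}: regularize, derive weighted a priori estimates uniform in the regularization, and pass to the limit. The degeneracy comes from $\varrho_0\to0$ at infinity and from the singular/degenerate $p,q$-Laplacian structure with $1<p,q<2$.

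\textbf{Regularization.} First I would solve the problem on bounded intervals $(-r,r)$. By (H1), $\varrho_0\geqslant c_r>0$ there, so equations (\ref{1dlg-E13})--(\ref{1dlg-E15}) are uniformly parabolic for $v,\Theta$ once the fluxes are regularized. I would use $(\varepsilon+|v_y/J|^2)^{\frac{q-2}2}v_y/J$ and the analogous expression for $\Theta$, with homogeneous Dirichlet data for $v$ and Neumann (or Dirichlet) data for $\Theta$. Standard Galerkin or fixed-point theory for quasilinear parabolic systems coupled with the ODE $J_t=v_y$ then gives a local solution $(J^{\varepsilon,r},v^{\varepsilon,r},\Theta^{\varepsilon,r})$ with $\varrho=J_0\varrho_0/J$. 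Smoothing the data is done so that (H1)--(H4) hold uniformly.

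\textbf{Uniform weighted estimates.} This is the core step. The target is a bound of the quantity in Definition \ref{1dlg-D1}, with constants independent of $\varepsilon,r$, on a time $T$ depending only on the listed parameters. The order I would follow is:

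(i) Multiply (\ref{1dlg-E14}) by $\varrho_0^{\alpha}J_0 v$ and (\ref{1dlg-E15}) by $\varrho_0^{\alpha+1}J_0\Theta$. The weight derivative produces terms like $(\varrho_0^\alpha)_y |v_y/J|^{q-1}v$. These are controlled using $(\varrho_0^\alpha)_y\in L^p\cap L^\infty$ together with Young's inequality, splitting into the dissipation $\int\varrho_0^\alpha |v_y/J|^qJ$ and lower-order weighted norms.

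(ii) Multiply (\ref{1dlg-E14}) by $\varrho_0^\alpha J_0 v_t$ and (\ref{1dlg-E15}) by $\varrho_0^{\alpha+2}J_0\Theta_t$. This yields the $L^\infty_tL^q$ bound of $\varrho_0^{\alpha/q}|v_y/J|J^{1/q}$, the corresponding $p$-bound for $\Theta$, and $L^2_tL^2$ bounds for $v_t,\Theta_t$.

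(iii) Estimate $J$. Bound $J$ from above and below by integrating $J_t=v_y$ and using $\|v_y\|_{L^\infty}\lesssim$ weighted norms, where $\varrho_0\leqslant\overline\varrho$ makes the negative powers of the weight harmless. Then differentiate $J_t=v_y$ in $y$ and multiply by $\varrho_0^\alpha J_y$ to control $\varrho_0^{\alpha/2}J_y$. This requires $\varrho_0^{\alpha/2}v_{yy}$ in $L^1_tL^2$.

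(iv) Expand the elliptic parts of (\ref{1dlg-E14})--(\ref{1dlg-E15}). For instance,
$$(q-1)\Big|\frac{v_y}J\Big|^{q-2}\Big(\frac{v_{yy}}J-\frac{v_yJ_y}{J^2}\Big)=J_0\varrho_0v_t+R(\varrho\Theta)_y,$$
which gives $\varrho_0^{\alpha/2}|v_y/J|^{\frac{q-2}2}v_{yy}/\sqrt J\in L^2_tL^2$. Since $q<2$, the factor $|v_y|^{q-2}$ is bounded below wherever $v_y$ is bounded, so $v_{yy}$ itself is controlled by $\|v_y\|_{L^\infty}^{(2-q)/2}$ times this quantity.

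The thresholds $\alpha<-\frac q{2(q-1)}$ and $\alpha<-\frac{4-p}{2-p}$ should be exactly what is needed to absorb the weight-derivative terms in the Young/Hölder exponents of steps (i) and (iv). I would track these exponents carefully.

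\textbf{Closing and passing to the limit.} Collecting (i)--(iv) gives an inequality $\mathcal E'(t)\leqslant C(1+\mathcal E)^{N}$. Gronwall's inequality then yields a uniform bound on a short time $T$. Compactness (Aubin--Lions, locally in $y$) lets $\varepsilon\to0$, $r\to\infty$, and monotonicity of $s\mapsto|s|^{q-2}s$ (Minty's trick) identifies the nonlinear fluxes. Uniqueness follows from a weighted $L^2$ energy estimate on differences, again using the monotonicity inequality $(|a|^{q-2}a-|b|^{q-2}b)(a-b)\geqslant c|a-b|^2(|a|+|b|)^{q-2}$ and the $L^\infty$ bound on $v_y$.

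\textbf{Main obstacle.} I expect the main difficulty to be step (iv) together with the $J_y$ estimate. The $p$-Laplacian with $p<2$ degenerates the control of $\Theta_{yy}$ exactly where $|\Theta_y|$ is large. Meanwhile, the source term $J|v_y/J|^q$ and the pressure term $(\varrho\Theta)_y$ involve $J_y$ and the weight $\varrho_0^\alpha$, which grows at infinity. My first attempt would be to bound $\|v_y\|_{L^\infty}$ and $\|\Theta_y\|_{L^\infty}$ by Gagliardo--Nirenberg interpolation between the weighted $L^q$/$L^p$ norms and the weighted second-derivative norms. I would then check whether the resulting powers close under the stated restriction on $\alpha$.
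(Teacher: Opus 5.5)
There is a genuine gap in step (iv), which is where all your second-order information is supposed to come from. Write $\sigma=|v_y/J|^{q-2}v_y/J$. The identity you display controls $\sigma_y$, i.e.\ essentially $|v_y/J|^{q-2}v_{yy}$, in $\varrho_0^{\alpha/2}$-weighted $L^2_tL^2$. Definition \ref{1dlg-D1} instead asks for $|v_y/J|^{\frac{q-2}{2}}v_{yy}$. Since $|v_y|^{q-2}v_{yy}^2=|v_y|^{2-q}\big(|v_y|^{q-2}v_{yy}\big)^2$, passing from one to the other costs a factor $\|v_y\|_{L^\infty}^{2-q}$, and you have no bound on it that is uniform in time. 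You could interpolate it back through Lemma \ref{2dlg-L2.1}, $\|v_y\|_{L^\infty}\lesssim E_q^{\frac1{2q}}D^{\frac1{2q}}$, where $E_q=\int\varrho_0^\alpha|v_y/J|^qJ$ and $D=\int\varrho_0^\alpha|v_y/J|^{q-2}v_{yy}^2/J$. Then the identity only gives $D\lesssim E_q^{\frac{2-q}{3q-2}}\big(\int\varrho_0^{\alpha+1}v_t^2+\cdots\big)^{\frac{2q}{3q-2}}$, with $\frac{2q}{3q-2}>1$ for $q<2$. Step (ii) only places $\int\varrho_0^{\alpha+1}v_t^2$ in $L^1(0,T)$, so $D\in L^1(0,T)$ does not follow. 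For the temperature the route fails already at the level of weights. Squaring $\big(|\Theta_y/J|^{p-2}\Theta_y/J\big)_y=J_0\varrho_0\Theta_t+R\varrho\Theta v_y-J|v_y/J|^q$ against $\varrho_0^\alpha$ requires $\int\varrho_0^{\alpha+2}\Theta_t^2$. Testing (\ref{1dlg-E15}) with $\varrho_0^{\alpha+2}\Theta_t$, as in your step (ii) and as (H4) dictates through $\varrho_0^{\frac{\alpha+2}{p}}|\Theta_0'/J_0|\in L^p$, only yields $\int\varrho_0^{\alpha+3}\Theta_t^2$. Since $\varrho_0^{-1}$ is unbounded at the far field, you are one power of $\varrho_0$ short.

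The missing idea is the $H^1$-level estimate of Steps 3--4 in Proposition \ref{3dlg-P3.2}. Differentiate (\ref{1dlg-E14}) and (\ref{1dlg-E15}) in $y$ and test with $\varrho_0^\alpha v_y$ and $\varrho_0^\alpha\Theta_y$. After one integration by parts, the viscous term $\int\sigma_y(\varrho_0^\alpha v_y)_y$ contains exactly $(q-1)\int\varrho_0^\alpha|v_y/J|^{q-2}v_{yy}^2/J$, with the right weight and no $\|v_y\|_{L^\infty}$ prefactor. The inertial term gives $\frac12\frac{d}{dt}\int\varrho_0^{\alpha+1}v_y^2$, and the same holds for $\Theta$. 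This is where the assumptions $\varrho_0^{\frac{\alpha+1}{2}}v_0'\in L^2$ and $\varrho_0^{\frac{\alpha+1}{2}}\Theta_0'\in L^2$ in (H3)--(H4) are used. It also yields $\varrho_0^{\frac{\alpha+1}{2}}v_y,\ \varrho_0^{\frac{\alpha+1}{2}}\Theta_y\in L^\infty(0,T;L^2)$, which Definition \ref{1dlg-D1} requires and which none of your steps (i)--(iv) produces. The paper then uses these quantities to bound $\|v_y\|_{L^\infty}\lesssim\big(\int\varrho_0^{\alpha+1}v_y^2\big)^{\frac1{q+2}}D^{\frac1{q+2}}$, and $\|\Theta\|_{L^\infty}$ via $\int\varrho_0^{\alpha+1}\Theta_y^2$ and $\int\varrho_0^{\alpha+2}\Theta^2$. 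With these, all the pieces close together in one inequality $\mathcal E'+\mathcal D\le\mathcal E^{\frac{2q}{q-1}}+G_0$, and Lemma \ref{2dlg-L2.2} gives the short-time bound. With Steps 3--4 added your scheme is essentially the paper's. The remaining difference is secondary: you truncate the domain and regularize the flux, whereas the paper lifts the density to $\varrho_0+\varepsilon$ on all of $\mathbb R$. That lifting keeps every weighted norm of the data uniformly bounded, because every weight exponent in (H1)--(H4) is negative.
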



\begin{remark}\label{1dlg-R1.1}
For constants $p,q\in(1,2)$ and $\alpha<\min\{-\frac{q}{2(q-1)},-\frac{4-p}{2-p}\},$ it can be verified that the condition $(\varrho_0^\alpha)_y\in L^p\cap L^\infty$ implies that
\begin{equation}
\int |\varrho_0^{\alpha-1}\varrho_0^{\prime}(y)|^p\ dy<\infty. \notag \\
\end{equation}
If $ \varrho_0=\frac{ K }{(1+|y|)^l}\ for\ some\ 0 < K <\infty\ and\ 0 < l<\infty,$
then
\begin{equation*}
\int |\varrho_0^{\alpha-1}\varrho_0^{\prime}(y)|^p\ dy<\infty\ implies\ 0<l<\frac {p-1} {-\alpha p}.
\end{equation*}
\end{remark}

Our second main result devotes to the global existence of strong solution to the problem (\ref{1dlg-E16})-(\ref{1dlg-E17}). To this end, we make some additional assumptions on
the initial data $\varrho_0(y).$



\begin{theorem}(Global well-posedness).\label{1dlg-thm2}
Let $p,q\in(1,2),$ $\alpha<\min\{-\frac{q}{2(q-1)},-\frac{4-p}{2-p}\}$ and $(\mathrm H1)$-$(\mathrm H4)$ hold. Assume that there exists a positive constant
$$l\in(0,\min\{1,-\frac{3p-2}{(2-p)\alpha+3},-\frac{p-1}{\alpha p}\})$$ such that
\begin{equation}
\varrho_0(y)\geqslant \frac {A_0} {(1+|y|)^{l}}\quad (\forall y \in \mathbb R)\tag{H5}
    \end{equation}
holds for some positive constant $A_0$. Then the problem (\ref{1dlg-E16})-(\ref{1dlg-E17}) admits a unique global strong solution $(J,\varrho,v,\Theta).$
\end{theorem}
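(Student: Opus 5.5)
The plan is the usual continuation argument built on Theorem \ref{1dlg-thm1}. The local existence time there depends only on $p,q,\underline J,\overline J,\overline\varrho$, $\|\varrho_0^{\frac\alpha2}J_0'\|_{L^2}$ and $\|(\varrho_0^\alpha)_y\|_{L^p\cap L^\infty}$; it does not depend on the higher norms of $v_0,\Theta_0$. So it suffices to show the following: for every $T<\infty$ and any strong solution on $[0,T_*)$ with $T_*\le T$, we have
$$0<\underline J(T)\le J\le \overline J(T),\qquad \|\varrho_0^{\frac\alpha2}J_y\|_{L^\infty(0,T_*;L^2)}\le C(T),$$
together with bounds on the remaining quantities in Definition \ref{1dlg-D1} by $C(T)$. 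We then restart at a time $t_0$ close to $T_*$. The new datum is $(J(t_0),\varrho(t_0),v(t_0),\Theta(t_0))$ with reference density still $\varrho_0$ and $J_0$ replaced by $J(t_0)$. This is why the theorem allows $J_0\not\equiv1$, and the data again satisfy (H1)--(H4) by the a priori bounds. Uniqueness follows from the local theorem.

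The a priori estimates come in the following order.

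\textbf{(i) Basic energy.} Multiply \eqref{1dlg-E14} by $J_0v$, integrate \eqref{1dlg-E15} against $J_0$, and add. This gives control of $\int J_0\varrho_0(\frac{v^2}{2}+\Theta)$, which is conserved.

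\textbf{(ii) Weighted energy.} Test \eqref{1dlg-E14} by $\varrho_0^{\alpha}J_0v$ and \eqref{1dlg-E15} by $\varrho_0^{\alpha+1}J_0\Theta$. The dissipative terms produce
$$\int\varrho_0^\alpha J|v_y/J|^q\quad\text{and}\quad\int\varrho_0^{\alpha+1}J|\Theta_y/J|^p,$$
plus commutator terms containing $(\varrho_0^\alpha)_y$. We handle these with $(\varrho_0^\alpha)_y\in L^p\cap L^\infty$ and Young's inequality with exponents $q,q'$ and $p,p'$. The restriction on $\alpha$ is what makes weights like $\varrho_0^{\alpha-\alpha q'/q}$ bounded.

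\textbf{(iii) Bounds on $J$.} Integrate \eqref{1dlg-E14} in time to get an effective-flux identity
$$|v_y/J|^{q-2}v_y/J - R\varrho\Theta = (\text{time antiderivative of } J_0\varrho_0 v_t)_y\text{-type terms}.$$
Since $q<2$, the map $s\mapsto|s|^{q-2}s$ has unbounded inverse near zero. Instead I would use $J_t=v_y$ directly: write $|J_t/J|^{q-1}\operatorname{sgn}$ in terms of the flux and integrate. Boundedness of the flux then gives $|\partial_t J|\lesssim J\,|\cdot|^{1/(q-1)}$, which yields the two-sided bounds of $J$ on $[0,T]$ by Gronwall. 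The flux bound requires $\sup_y\varrho\Theta$ and $\int_0^t\varrho_0v\,dy$-type integrals. The first is obtained from $\varrho=J_0\varrho_0/J$ together with a weighted $L^\infty$ bound on $\Theta$. That bound comes from $\varrho_0^{\alpha+1}\Theta\in L^2$ and the $\Theta_y$ dissipation via $\|f\|_{L^\infty}^2\le\|f\|_{L^2}\|f_y\|_{L^2}$, after which the weight is removed using $\varrho_0\le\overline\varrho$.

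\textbf{(iv) Higher-order estimates.} Test by $\varrho_0^{\alpha}J_0v_t$ and $\varrho_0^{\alpha+2}J_0\Theta_t$. This gives the $L^\infty_t$ bounds on $\varrho_0^{\frac\alpha q}|v_y/J|J^{1/q}$ and $\varrho_0^{\frac\alpha p}|\Theta_y/J|J^{1/p}$, and the $L^2_t$ bounds on the weighted $v_t,\Theta_t$. Differentiating the equations and testing against the degenerate weight $|v_y/J|^{q-2}$ then gives the $v_{yy},\Theta_{yy}$ terms. Finally, $J_y$ is estimated by differentiating the flux identity in $y$ and using $\varrho_0^{\frac\alpha2}J_0'\in L^2$ and $\varrho_0^\alpha J_0'\in L^1$.

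\textbf{Main obstacle.} I expect the hardest part to be closing the $J$ bounds and the $\varrho\Theta$ bound in step (iii). Because of the far-field vacuum, the weighted estimates only control $\varrho_0^{\alpha+1}\Theta$. Converting this into control of $\varrho\Theta$ and of the heat-flux commutator $\int\varrho_0^{\alpha}\varrho_0'\,|\Theta_y/J|^{p-2}\Theta_y\Theta/J$ needs a lower bound on $\varrho_0$ at infinity. This is exactly where (H5) enters. I would estimate $(1+|y|)^{l}$-growth factors against the $L^p$ integrability of $\varrho_0^{\alpha-1}\varrho_0'$ from Remark \ref{1dlg-R1.1}, and use Hölder with $p<2$. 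The constraints $l<1$, $l<-\frac{3p-2}{(2-p)\alpha+3}$ and $l<-\frac{p-1}{\alpha p}$ should be precisely what makes these weighted integrals finite. The resulting bounds grow in $T$, which is consistent with the ``time-dependent boundedness'' of the energy announced in the abstract.
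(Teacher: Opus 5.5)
\textbf{Gap: the global a priori bounds are asserted, not obtained.} Reducing to a continuation argument is fine in principle. But the whole content of the theorem is your claim that on every $[0,T]$ one has $0<\underline J(T)\le J\le\overline J(T)$ together with $C(T)$-bounds on the weighted norms, and none of your steps actually closes.

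The weighted estimates in (ii) and (iv) contain genuinely superlinear terms:
\begin{itemize}
\item The pressure work $\int\varrho_0^{\alpha}\varrho\Theta v_y\,dy$ can only be absorbed into the $q$-dissipation by Young's inequality at the cost of $\Theta^{q/(q-1)}$ with $q/(q-1)>2$.
\item Testing the temperature equation by $\varrho_0^{\alpha+1}\Theta$ produces $\int\varrho_0^{\alpha+1}\Theta\,J|v_y/J|^q\,dy$, which needs $\|\Theta\|_{L^\infty}$.
\item The $J_y$--$v_{yy}$ couplings carry a factor $\|v_y\|_{L^\infty}^{(2-q)/2}$.
\end{itemize}
This is exactly why Proposition \ref{5dlg-P5.2} only reaches the Riccati-type inequality $E'\le E^{2q/(q-1)}+C\delta(t)^{-24/(2-q)}(\cdots)$. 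Through Lemma \ref{2dlg-L2.2}, that inequality controls $E$ only up to a finite time. Your conserved energy in (i) has no dissipation term, so it cannot break this. Everything therefore rests on step (iii), and (iii) as sketched does not work.

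\textbf{Why step (iii) fails.} There are two problems.

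First, the anchoring. You write $J_0\varrho_0v_t=\sigma_y$ with $\sigma=|v_y/J|^{q-2}v_y/J-R\varrho\Theta$. A pointwise bound on $\sigma$ then needs half-line integrals $\int_{-\infty}^{y}J_0\varrho_0v_t$, or after time integration $\int_{-\infty}^{y}J_0\varrho_0(v-v_0)$. But (H5) with $l<1$ forces $\int\varrho_0\,dy=\infty$, so the conserved energy does not control these integrals. Using the weighted energy instead requires $\int\varrho_0^{1-\alpha}\,dy<\infty$. This fails for admissible profiles $\varrho_0=K(1+|y|)^{-l}$ with $l$ small.

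Second, the inversion and the $L^\infty$ bound on $\Theta$. The inverse of $s\mapsto|s|^{q-2}s$ is harmless near $0$ but grows like $|\sigma|^{1/(q-1)}$ with $1/(q-1)>1$. So Gronwall for $\log J$ needs $\int_0^T\|\varrho\Theta\|_{L^\infty}^{1/(q-1)}\,dt<\infty$. Your route to $\|\Theta\|_{L^\infty}$ through $\|f\|_{L^\infty}^2\le\|f\|_{L^2}\|f_y\|_{L^2}$ requires $\varrho_0^{\frac{\alpha+1}{2}}\Theta_y\in L^2$. Step (ii) only provides the $L^p$ dissipation with $p<2$. The $L^2$ bound comes from the differentiated equation in (iv), whose closure uses the very bounds on $J^{\pm1}$ you are trying to prove, so the argument is circular. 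In the Kazhikhov--Shelukhin theory this circle is broken by an entropy inequality, and nothing of that kind appears in your plan.

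\textbf{How the paper differs.} In the paper, (H5) is not used for $\varrho\Theta$ or for $J$ at all. It only serves to kill the cutoff terms through $|\xi_r'(y)|\lesssim\varrho_0(y)^{1/l}$ when letting $r\to\infty$ in Proposition \ref{5dlg-P5.2}. The paper also never claims bounds on arbitrary $[0,T]$. It accepts the finite lifespan given by Lemma \ref{2dlg-L2.2} and restarts at $T_1,T_2,\dots$ with new reference data $(J_{l-1},\varrho_{l-1})=(J,\varrho)(\cdot,T_{l-1})$. It then argues, via the auxiliary functions $g$ and $h$, that $\sum_l(T_l-T_{l-1})=\infty$. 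To keep your continuation strategy, you would need a genuinely new global estimate for $J^{\pm1}$ and $\Theta$ on finite intervals, and your proposal does not supply one.
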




\begin{remark}\label{1dlg-R1.4}
It should be mentioned here that the index $\alpha$ of weighted function is controlled by the values of $p$ and $q$ in  Theorem \ref{1dlg-thm1} and Theorem \ref{1dlg-thm2}.
Moreover, the requirement of initial density in  Theorem \ref{1dlg-thm2} depends on the values of $\alpha,$ $p$ and $q.$ These facts not only explain the differences between isentropic compressible non-Newtonian fluid and full compressible non-Newtonian fluid, but also show the differences between full compressible Navier-Stokes equations and full compressible non-Newtonian fluid.
\end{remark}


We explain main ideas to prove Theorem \ref{1dlg-thm1}. Indeed, the local existence of the strong solution $(J,\varrho,v,\Theta)$ to the problem (\ref{1dlg-E16})-(\ref{1dlg-E17}) can be proved by the similar arguments in \cite{Li-Xin-2020,Fang-Zang-2023}. However, some new difficulties appear 
in comparison with Navier-Stokes equations in \cite{Li-Xin-2020} and isentropic non-Newtonian fluid in \cite{Fang-Zang-2023}. Main difficulty comes from the nonlinear term $\left(|\frac{v_y}J|^{q-2}\frac{v_y}J\right)_y$ with $1<q<2$ and the term $\left(|\frac{\Theta_y}J|^{p-2}\frac{\Theta_y}J\right)_y$ with $1<p<2,$ due to certain singularity. To overcome this obstacle, we explore the iterative method to derive the estimate of uniform bound, which is independent of the lower bound of density (for the details to see Proposition \ref{3dlg-P3.2} in Section \ref{3dlg-S3}). The higher order regularities of the solution is obtained by the iterative method.

In order to extend the solution globally in time,  we need to control $\inf\limits_{y\in {\mathbb R}}J(y,t)$ for the problem (\ref{1dlg-E16})-(\ref{1dlg-E17}). We find that $\inf\limits_{y\in {\mathbb R}}J(y,t)$ goes to zero as the time goes to infinity, while the term $\inf\limits_{y\in {\mathbb R}}J(y,t)$ in \cite{Li-Xin-2020} for Navier-Stokes equations is controlled by initial data. 
Fortunately, $\inf\limits_{y\in {\mathbb R}}J(y,t)$ is strictly positive on the existence-time interval of local strong solution to the problem (\ref{1dlg-E16})-(\ref{1dlg-E17}). This observation helps us to find a special function about time to control $\inf\limits_{y\in {\mathbb R}}J(y,t)$ (the details can be found Proposition \ref{5dlg-P5.2} in Section \ref{5dlg-S5}).  With aid of these auxiliary function, we construct a divergent series of the time 
to get the global existence of strong solution (the detail can be found in the proof of Theorem \ref{1dlg-thm2}).

\subsection{Preliminaries}
In this subsection, we introduce some basic lemmas to be used later. The first one is the well-known integration inequality

\begin{lemma}(\cite{Fang-Zang-2023})\label{2dlg-L2.1}
Let $p$ and $l$ be given real constants satisfying $l + 2 > 0$ and $p < l +\frac 3 2$. Then there exists a positive constant $C$ such that
\begin{equation}\label{2dlg-E1}
\sup\limits_{x\in\mathbb R}|\varphi^{\prime}(x)|\leqslant C\left(\int |\varphi^{\prime}(x)|^{2(l-p+2)}dx\right)^{\frac1 {2(l+2)}}\left(\int |\varphi^{\prime}(x)|^{2(p-1)}|\varphi^{\prime\prime}(x)|^2dx\right)^{\frac1 {2(l+2)}}
    \end{equation}
holds for any $\varphi\in C^{2+\nu}(\mathbb R)$ with $0 < \nu < 1,$ if each term in the right-hand side is finite.
\end{lemma}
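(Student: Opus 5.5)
The plan is to reduce the inequality to a one-dimensional fundamental-theorem-of-calculus estimate for a suitable power of $w:=\varphi'$, followed by Cauchy--Schwarz.

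\textbf{Step 1: the auxiliary function.} Set $w=\varphi'\in C^{1+\nu}(\mathbb R)$ and $g=|w|^{l+2}$. Since $l+2>0$, $g$ is continuous. Where $w\neq0$ it is $C^1$ with
$$|g'|=(l+2)|w|^{l+1}|w'|.$$

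\textbf{Step 2: a point where $g$ is small.} Because $p<l+\frac32$, the exponent $2(l-p+2)$ is larger than $1$, in particular positive. Since $\int|w|^{2(l-p+2)}dx<\infty$, we get $\liminf_{x\to-\infty}|w(x)|=0$. Choose $x_n\to-\infty$ with $g(x_n)\to0$.

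\textbf{Step 3: the main estimate.} For any $x$ we have $g(x)\le g(x_n)+\int_{x_n}^x|g'|\,dy$. To justify this when $l+1<0$, where $|w|^{l+1}$ is singular at zeros of $w$, I would work with $g_\varepsilon=(w^2+\varepsilon)^{(l+2)/2}-\varepsilon^{(l+2)/2}$. This function is $C^1$ and satisfies $|g_\varepsilon'|\le(l+2)(w^2+\varepsilon)^{l/2}|w||w'|$. I would then let $\varepsilon\to0$ by monotone/dominated convergence, or alternatively apply the fundamental theorem on each component of $\{w\neq0\}$ together with absolute continuity of $g$. Either way we obtain
$$g(x)\le g(x_n)+(l+2)\int_{\mathbb R}|w|^{l+1}|w'|\,dy .$$

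\textbf{Step 4: Cauchy--Schwarz.} Split $|w|^{l+1}|w'|=\big(|w|^{p-1}|w'|\big)\cdot|w|^{l-p+2}$ and apply Cauchy--Schwarz:
$$\int|w|^{l+1}|w'|\,dy\le\Big(\int|w|^{2(p-1)}|w'|^2\Big)^{1/2}\Big(\int|w|^{2(l-p+2)}\Big)^{1/2}.$$
Letting $n\to\infty$, taking the supremum in $x$, and raising to the power $\frac1{l+2}$ gives the claimed inequality with $C=(l+2)^{1/(l+2)}$.

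The main obstacle is the regularization in Step 3 when $-2<l<-1$, where $|w|^{l+2}$ is not $C^1$ at zeros of $w$. The $\varepsilon$-regularization is what I would try first, since both right-hand integrals are finite by assumption.
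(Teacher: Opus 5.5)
Your proof is correct. The paper gives no proof of this lemma; it is quoted from \cite{Fang-Zang-2023}, so there is no in-paper argument to compare against. Your route is the standard one: apply the fundamental theorem of calculus to $|\varphi'|^{l+2}$ starting from points $x_n\to-\infty$ where $\varphi'$ is small, then use Cauchy--Schwarz with the splitting $|w|^{l+1}|w'|=\big(|w|^{p-1}|w'|\big)|w|^{l-p+2}$. This gives exactly the stated exponents, with $C=(l+2)^{1/(l+2)}$.

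Some minor remarks:

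\begin{itemize}
\item You only use $l-p+2>0$, to get $\liminf_{x\to-\infty}|w(x)|=0$. So your argument works under the weaker hypothesis $p<l+2$; the assumption $p<l+\frac32$ is not needed on this route.
\item In Step 3 no convergence theorem is required. For $l<0$ one has $(w^2+\varepsilon)^{l/2}\le|w|^{l}$ on $\{w\neq0\}$, and $g_\varepsilon'=0$ on $\{w=0\}$. Hence $|g_\varepsilon'|\le(l+2)|w|^{l+1}|w'|\chi_{\{w\neq0\}}$ uniformly in $\varepsilon$, and you only need to pass to the limit in the point values $g_\varepsilon(x)$ and $g_\varepsilon(x_n)$.
\item This also makes explicit that the integral $\int|w|^{l+1}|w'|$ and the Cauchy--Schwarz step live on $\{w\neq0\}$. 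Nothing is lost, since $w'=0$ a.e.\ on $\{w=0\}$.
\item Your component-wise alternative does not need a priori absolute continuity of $g$, which is not automatic when $0<l+2<1$. If $w(x)\neq0$, the component of $\{w\neq0\}$ containing $x$ either has a finite left endpoint, where $g=0$, or is unbounded to the left and contains $x_n$ for all large $n$. In either case the fundamental theorem of calculus on that component gives the bound.
\end{itemize}
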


The second lemma gives a local version of Gronwall's lemma, which is a generalization of Lemma 24 in \cite{Diening-2010} and proved in \cite{Fang-Zang-2023}.

\begin{lemma}(\cite{Fang-Zang-2023})\label{2dlg-L2.2}
(Local version of Gronwall's inequality). Let $T, T_0, \sigma, c_0$ be given positive constants, $h\in L^1(T_0,T)$ with $h\geqslant0$ a.e. on $[T_0,T]$, $f \in C^1[T_0,T]$ satisfying $f \geqslant0$ and
\begin{equation}
f^{\prime}(t)\leqslant h(t) + c_0f^{1+\sigma}(t).
    \end{equation}
Suppose that there exists $t_0\in[T_0,T]$ such that $\sigma c_0H^{\sigma}(t_0)(t_0-T_0)<1$, where
\begin{equation*}
H(t):=f(T_0)+\int_{T_0}^t h(s)ds.
    \end{equation*}
Then
\begin{equation*}
f(t)\leqslant H(t)(1-\sigma c_0H^{\sigma}(t)(t-T_0))^{-\frac1 \sigma}
    \end{equation*}
holds for all $t\in[T_0,t_0]$.
\end{lemma}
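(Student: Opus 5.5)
This is a proof of Lemma \ref{2dlg-L2.2}, the local Gronwall inequality; the method is a comparison argument for the Bernoulli-type differential inequality. Since $h\geqslant0$, $H$ is nondecreasing and $H(t)\geqslant f(T_0)\geqslant 0$. Set
$$\Phi(t):=1-\sigma c_0H^{\sigma}(t)(t-T_0).$$
This function is nonincreasing on $[T_0,T]$ and satisfies $\Phi(t_0)>0$ by hypothesis. Hence $\Phi>0$ on $[T_0,t_0]$, so the claimed bound is well defined there.

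The key step freezes $H$ at a fixed endpoint. Fix $t_1\in(T_0,t_0]$ and set $M:=H(t_1)$; we may assume $M>0$, and otherwise perturb $f(T_0)$ by $\varepsilon>0$ and let $\varepsilon\to0$. For $t\in[T_0,t_1]$ integrate the inequality to get
$$f(t)\leqslant f(T_0)+\int_{T_0}^t h\,ds+c_0\int_{T_0}^t f^{1+\sigma}ds\leqslant M+c_0\int_{T_0}^t f^{1+\sigma}ds=:g(t).$$
The function $g$ is absolutely continuous with $g(T_0)=M>0$ and
$$g'=c_0f^{1+\sigma}\leqslant c_0g^{1+\sigma}.$$
Since $g\geqslant M>0$, we may differentiate $g^{-\sigma}$:
$$\frac{d}{dt}g^{-\sigma}=-\sigma g^{-\sigma-1}g'\geqslant-\sigma c_0.$$
Integrating gives
$$g^{-\sigma}(t)\geqslant M^{-\sigma}-\sigma c_0(t-T_0)=M^{-\sigma}\bigl(1-\sigma c_0M^{\sigma}(t-T_0)\bigr).$$
At $t=t_1$ the right-hand side equals $M^{-\sigma}\Phi(t_1)>0$. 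For $t\leqslant t_1$ it is even larger, because $t\mapsto 1-\sigma c_0M^\sigma(t-T_0)$ is decreasing. Taking $-1/\sigma$ powers then yields
$$f(t_1)\leqslant g(t_1)\leqslant H(t_1)\bigl(1-\sigma c_0H^{\sigma}(t_1)(t_1-T_0)\bigr)^{-1/\sigma}.$$
As $t_1$ was arbitrary, the estimate holds on $(T_0,t_0]$. At $t=T_0$ it is trivial, since the right-hand side equals $f(T_0)$.

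The main point is this freezing step. The naive comparison with $\frac{d}{dt}H=h$ does not give the stated form directly, because $H$ varies in time. Replacing $h$ by the constant bound $M=H(t_1)$ on the whole interval $[T_0,t_1]$ reduces the problem to a pure Bernoulli inequality with explicit blow-up time. Positivity of $\Phi(t_1)$ is exactly what guarantees this blow-up time lies beyond $t_1$, so that division by $g^{\sigma}$ and the power manipulations are legitimate. The hypothesis $f\in C^1$ makes the integration steps routine.
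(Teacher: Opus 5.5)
Your proof is correct. This paper gives no proof of the lemma: it quotes it from \cite{Fang-Zang-2023} as a generalization of Lemma 24 in \cite{Diening-2010}. So there is no in-paper argument to compare against, and your argument (freezing $M=H(t_1)$, then a Bihari/Bernoulli comparison for $g$) is a valid self-contained proof.

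Each step uses the hypotheses correctly:
\begin{itemize}
\item The freezing needs only that $H$ is nondecreasing, which follows from $h\geqslant0$.
\item The comparison $g'=c_0f^{1+\sigma}\leqslant c_0g^{1+\sigma}$ uses $0\leqslant f\leqslant g$.
\item The hypothesis on $t_0$ enters only through the monotonicity of $\Phi$, which gives $\Phi(t_1)\geqslant\Phi(t_0)>0$.
\end{itemize}

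Two small precisions.

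First, dividing by $g^{\sigma}$ is legitimate simply because $g\geqslant M>0$. No blow-up can occur, since $g$ is built from the given continuous $f$. So positivity of $\Phi(t_1)$ is needed only for the final inversion $g^{-\sigma}(t_1)\geqslant M^{-\sigma}\Phi(t_1)>0\Rightarrow g(t_1)\leqslant M\Phi(t_1)^{-1/\sigma}$, not for the differentiation step.

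Second, in the degenerate case $M=0$, ``perturb $f(T_0)$'' should be read as replacing the constant $M$ by $M+\varepsilon$ in the definition of $g$, not as perturbing $f$ itself. Take $\varepsilon$ small enough that $1-\sigma c_0(M+\varepsilon)^{\sigma}(t_1-T_0)>0$, which is possible because $\Phi(t_1)>0$. Then let $\varepsilon\to0$; this yields $f(t_1)\leqslant0$, as required.
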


Next, we need the following inequalities to deal with the nonlinear terms in (\ref{1dlg-E14}) and (\ref{1dlg-E15}).

\begin{lemma}\label{2dlg-L2.3}(\cite{Lindqvist-2006})
For any fixed constant $r\in(1, 2)$, there exist positive constants $C_1, C_2$, depending on $r$ such that
\begin{equation*}
\begin{aligned}
&\left(|\eta|^{r-2}\eta-|\eta^{\prime}|^{r-2}\eta^{\prime}\right)(\eta-\eta^{\prime})\geqslant C_1\left(1+|\eta|^2+|\eta^{\prime}|^2\right)^{\frac{r-2} 2}|\eta-\eta^{\prime}|^2,\notag\\
&\ ||\eta|^{r-2}\eta-|\eta^{\prime}|^{r-2}\eta^{\prime}|\leqslant C_2|\eta-\eta^{\prime}|^{r-1}
\end{aligned}
    \end{equation*}
hold for any $\eta, \eta^{\prime}\in{\mathbb R}^d$ with $|\eta|+|\eta^{\prime}|>0.$
\end{lemma}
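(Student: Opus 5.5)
The plan is to prove both inequalities by the fundamental theorem of calculus along the segment joining $\eta'$ to $\eta$. Set $\xi_s:=\eta'+s(\eta-\eta')$ for $s\in[0,1]$ and $a(\xi):=|\xi|^{r-2}\xi$. Away from $\xi=0$ the map $a$ is smooth, with
\begin{equation*}
Da(\xi)=|\xi|^{r-2}\Bigl(I+(r-2)\frac{\xi\otimes\xi}{|\xi|^2}\Bigr).
\end{equation*}
Since $r\in(1,2)$, the eigenvalues of the bracket are $1$ (on $\xi^\perp$) and $r-1$ (along $\xi$). This gives the two facts we need:
\begin{equation*}
(r-1)|\xi|^{r-2}|w|^2\leqslant Da(\xi)w\cdot w,\qquad |Da(\xi)w|\leqslant|\xi|^{r-2}|w|.
\end{equation*}
The segment $\xi_s$ meets the origin for at most one value of $s$. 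Near such a point $|\xi_s|^{r-2}\sim|s-s_0|^{r-2}$, which is integrable because $r-2>-1$. Also $a$ is absolutely continuous along the segment, since it is continuous and $C^1$ off a single point with integrable derivative. Hence
\begin{equation*}
a(\eta)-a(\eta')=\int_0^1 Da(\xi_s)(\eta-\eta')\,ds .
\end{equation*}
The hypothesis $|\eta|+|\eta'|>0$ only guarantees that the segment is not the single point $0$.

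For the \emph{lower bound}, take the inner product of the integral identity with $\eta-\eta'$. This gives $(a(\eta)-a(\eta'))\cdot(\eta-\eta')\geqslant(r-1)|\eta-\eta'|^2\int_0^1|\xi_s|^{r-2}ds$. Next, $|\xi_s|\leqslant|\eta|+|\eta'|\leqslant\sqrt2\,(1+|\eta|^2+|\eta'|^2)^{1/2}$, and the exponent $r-2$ is negative. So $|\xi_s|^{r-2}\geqslant 2^{\frac{r-2}2}(1+|\eta|^2+|\eta'|^2)^{\frac{r-2}2}$ pointwise, and the first inequality follows with $C_1=(r-1)2^{\frac{r-2}2}$.

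For the \emph{upper bound}, I will split into cases; this is the only step needing care, because the crude estimate $\int_0^1|\xi_s|^{r-2}ds$ is not directly comparable to $|\eta-\eta'|^{r-2}$. By symmetry, assume $|\eta|\geqslant|\eta'|$, so $|\eta|>0$.

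\emph{Case 1:} $|\eta-\eta'|\geqslant|\eta|/2$. We do not use the integral. Directly, $|a(\eta)-a(\eta')|\leqslant|\eta|^{r-1}+|\eta'|^{r-1}\leqslant2|\eta|^{r-1}\leqslant2^{r}|\eta-\eta'|^{r-1}$.

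\emph{Case 2:} $|\eta-\eta'|<|\eta|/2$. The segment stays away from the origin, since $|\xi_s|\geqslant|\eta|-(1-s)|\eta-\eta'|\geqslant|\eta|/2$. The integral identity and the operator bound then give $|a(\eta)-a(\eta')|\leqslant(|\eta|/2)^{r-2}|\eta-\eta'|$. Because $r-2<0$ and $|\eta|/2>|\eta-\eta'|$, we have $(|\eta|/2)^{r-2}\leqslant|\eta-\eta'|^{r-2}$, so the bound is at most $|\eta-\eta'|^{r-1}$.

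Combining the two cases gives the second inequality with $C_2=2^{r}$.
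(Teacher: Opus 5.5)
Your proof is correct: the segment representation $a(\eta)-a(\eta')=\int_0^1 Da(\xi_s)(\eta-\eta')\,ds$ (valid because the singularity at a crossing of the origin is integrable), the eigenvalue bounds $r-1$ and $1$ for $Da$, and the two-case split for the H\"older bound all check out; the only point to tidy is that the trivial case $\eta=\eta'$ should be set aside in Case 2 before you write $|\eta-\eta'|^{r-2}$. The paper gives no proof of its own and only cites Lindqvist's notes, where, as far as I recall, the monotonicity inequality is obtained by the same segment-integral argument; also, bounding $|\xi_s|\leqslant\max\{|\eta|,|\eta'|\}$ instead of $|\eta|+|\eta'|$ improves your constant to $C_1=r-1$.
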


The rest of the paper is organized as follows. 
The existence and uniqueness of solution to the problem (\ref{1dlg-E16})-(\ref{1dlg-E17}) in the absence of vacuum is established in Section \ref{3dlg-S3}.
The proof of the local-in-time existence and uniqueness is given in Section \ref{4dlg-S4}, by adopting a similar framework in \cite{Li-Xin-2020,Fang-Zang-2023}. In Section \ref{5dlg-S5} we prove the global existence of strong solutions.



\section{Local existence in the absence of vacuum}\label{3dlg-S3}

In this section, we will derive some necessary {\sl a priori} estimates for strong solutions to the Cauchy problem (\ref{1dlg-E16})-(\ref{1dlg-E17}) in the absence of vacuum. For convince, it is always assumed that $J_0\equiv1$ throughout this section. Since the arguments are similar in the framework of Li-Xin \cite{Li-Xin-2020} and Fang-Zang \cite{Fang-Zang-2023}, we state the result and just give the outline of the proof as follows.

\begin{proposition}\label{3dlg-P3.1}
Let $p,q\in(1,2)$ and $\alpha<\min\{-\frac{q}{2(q-1)},-\frac{4-p}{2-p}\}.$ Assume that
\begin{equation}\label{3dlg-E1}
\left\{\begin{array}{l}
0<\underline{\varrho}\leqslant\varrho_0(y)\leqslant\overline\varrho<\infty,\quad0<\underline J\leqslant J_0(y)\leqslant\overline J<\infty,\\
\varrho_0^{\prime}\in L^{\infty},\quad J_0^{\prime}\in L^2\cap L^1,\quad v_0\in W^{1,q},\quad \Theta_0\in W^{1,p}
\end{array}
  \right.
    \end{equation}
for positive constants $\underline\varrho,$ $\overline\varrho,$ $\underline J$ and $\overline J.$

Then there exists a positive time $T$ such that the problem (\ref{1dlg-E16})-(\ref{1dlg-E17}) admits a unique local strong solution $(J,\varrho, v,\Theta)$ on ${\mathbb R}\times [0, T]$ satisfying
\begin{eqnarray*}
&&\frac3 4\underline J\leqslant J\leqslant\frac3 4\overline J,~\varrho\geqslant C\underline{\varrho}>0\ on\ \mathbb R \times [0,T],~J-J_0\in C([0,T];W^{1,q}),~J_t\in L^q(0,T;L^q), \\
&&v\in C([0,T];W^{1,q}),|v_y|^{\frac{q-2} 2}v_{yy}\in L^2(0,T;L^2),\Theta\in C([0,T];W^{1,p}),|\Theta_y|^{\frac{p-2} 2}\Theta_{yy}\in L^2(0,T;L^2),
    \end{eqnarray*}
where the time $T$ depends only on $\underline\varrho, \overline\varrho$ and $\mathcal{N}_0$ with
\begin{equation*}
\mathcal{N}_0=\frac1 {\underline J}+\overline J+\| J_0^{\prime}\|_{L^2}+\| v_0^{\prime}\|_{W^{1,q}}+\|\Theta_0^{\prime}\|_{W^{1,p}}.
    \end{equation*}
\end{proposition}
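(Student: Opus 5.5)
I will prove Proposition \ref{3dlg-P3.1} by a linearization plus fixed point argument in the Lagrangian formulation (\ref{1dlg-E13})--(\ref{1dlg-E15}), with $\varrho=\varrho_0J_0/J$ eliminated via (\ref{1dlg-E16}). The heart of the argument is an a priori estimate, uniform in the time of existence, for the quantity
\begin{equation*}
\mathcal E(t)=\int |v_y|^q\,dy+\int |\Theta_y|^p\,dy+\|J_y\|_{L^2}^2+\|v\|_{L^2}^2+\|\Theta\|_{L^2}^2,
\end{equation*}
together with the dissipation $\int_0^t\int(|v_t|^2+|\Theta_t|^2+|v_y|^{q-2}v_{yy}^2+|\Theta_y|^{p-2}\Theta_{yy}^2)$, valid as long as $\frac34\underline J\leqslant J\leqslant\frac34\overline J$ (read with the constants adjusted so that this is a genuine two-sided bound near $J_0$). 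Since $\varrho_0\geqslant\underline\varrho>0$, all weights are harmless and the problem is a uniformly parabolic, but degenerate/singular $p$-Laplacian type, system for $(v,\Theta)$ coupled to the ODE $J_t=v_y$.

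The steps are as follows. First, I regularize the nonlinear fluxes, replacing $|\xi|^{r-2}\xi$ by $(\varepsilon+|\xi|^2)^{\frac{r-2}2}\xi$ for $r=p,q$. For fixed $\varepsilon$, the system with frozen $J$ is a quasilinear nondegenerate parabolic system, and standard theory gives a local solution. Second, I derive $\varepsilon$-independent estimates.
\begin{itemize}
\item Multiply (\ref{1dlg-E14}) by $v_t$ and (\ref{1dlg-E15}) by $\Theta_t$ to control $\frac{d}{dt}\int\frac1r|v_y/J|^q J$, and similarly for $\Theta$, plus $\int\varrho_0(v_t^2+\Theta_t^2)$. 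The pressure and source terms $\varrho\Theta v_y$ and $J|v_y/J|^q$ are bounded by $\sup|v_y|$ and $\sup|\Theta|$ times energy.
\item Obtain the second-derivative dissipation by rewriting the equations as $(|v_y/J|^{q-2}v_y/J)_y=J_0(\varrho_0v_t+\dots)$, which yields $|v_y|^{q-2}v_{yy}$ in $L^2$ up to terms involving $J_y$.
\item Control $\sup|v_y|$ by Lemma \ref{2dlg-L2.1} with $2(p-1)$ replaced by $q-2$, i.e. exponent choice $p=q/2$ in the lemma, so that $\sup|v_y|\lesssim(\int|v_y|^{q})^{a}(\int|v_y|^{q-2}v_{yy}^2)^{a}$. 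Do the same for $\Theta_y$.
\item Estimate $J_y$ from differentiating $J_t=v_y$, giving $\frac{d}{dt}\|J_y\|_{L^2}\leqslant\|v_{yy}\|_{L^2}$. Here $\|v_{yy}\|_{L^2}\leqslant\sup|v_y|^{\frac{2-q}2}\||v_y|^{\frac{q-2}2}v_{yy}\|_{L^2}$, which is fine because $q<2$.
\end{itemize}
Combining these gives $f'\leqslant h+c_0f^{1+\sigma}$, and Lemma \ref{2dlg-L2.2} yields a time $T$ depending only on $\underline\varrho,\overline\varrho,\mathcal N_0$. On that interval, $|J-J_0|\leqslant\int_0^t\sup|v_y|$ is small, which closes the bound on $J$. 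Third, I pass to the limit $\varepsilon\to0$ by compactness (Aubin--Lions on bounded intervals plus a diagonal argument on $\mathbb R$). I identify the nonlinear fluxes by the monotonicity in Lemma \ref{2dlg-L2.3} (Minty's trick), and obtain the continuity in time from the equations. Finally, I prove uniqueness by taking the difference of two solutions, testing with $v_1-v_2$ and $\Theta_1-\Theta_2$, using the first inequality of Lemma \ref{2dlg-L2.3} for coercivity and the second for the $J$-differences, and then applying Gronwall.

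I expect the main obstacle to be the singular character of the fluxes for $r<2$. The coercivity from Lemma \ref{2dlg-L2.3} degenerates where $|v_y|$ is large, and the second-derivative control only comes with the weight $|v_y|^{q-2}$. Closing the estimates therefore requires matching exponents in Lemma \ref{2dlg-L2.1} so that every term is bounded by a power of $f$ plus integrable dissipation; this is where the superlinear $f^{1+\sigma}$ and the local Gronwall lemma enter. The temperature source $|v_y|^q$ and the coupling terms carrying $J_y$ need to be handled by the same interpolation.
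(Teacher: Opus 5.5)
Your overall route (regularize the fluxes, derive uniform estimates, pass to the limit by compactness, prove uniqueness by monotonicity) differs from the paper's contraction on $X_{M,T}$, and that difference alone would be acceptable. The problem is that your central a priori estimate cannot close as set up.

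\textbf{The dissipation estimate fails.} Your functional $\mathcal E$ controls $v_y$ only in $L^q$ and $\Theta_y$ only in $L^p$. You want the dissipation $D=\int|v_y|^{q-2}v_{yy}^2\,dy$ to come from reading the equation as an elliptic identity. You need $D$ for three things: for Lemma \ref{2dlg-L2.1}, hence for $\sup|v_y|$, $\|v_{yy}\|_{L^2}$, $J-J_0$ and all coupling terms; and for the conclusion itself.

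The elliptic identity only gives $|v_y|^{q-2}v_{yy}\in L^2$, i.e.\ weight $|v_y|^{2(q-2)}$ instead of $|v_y|^{q-2}$. For $q<2$ the conversion costs $\sup|v_y|^{2-q}$. In the model case $v_t=(|v_y|^{q-2}v_y)_y$, combining this with Lemma \ref{2dlg-L2.1} gives only $D\lesssim\|v_t\|_{L^2}^2+\big(\int|v_y|^q\big)^{\frac{2-q}{3q-2}}\|v_t\|_{L^2}^{\frac{4q}{3q-2}}$. Since $\frac{4q}{3q-2}>2$ and testing with $v_t$ gives only $v_t\in L^2(0,T;L^2)$, this is not integrable in time.

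This is not a technicality. For that model equation one has exactly
\[
\frac12\frac{d}{dt}\int v_y^2\,dy+(q-1)\int|v_y|^{q-2}v_{yy}^2\,dy=0,
\]
so $\int_0^TD\,dt<\infty$ forces $v_0'\in L^2$. The bound $\int|v_0'|^q\,dy<\infty$ does not give this when $q<2$. Hence no inequality $f'\leqslant h+c_0f^{1+\sigma}$ with $f=\mathcal E$ and $D$ in the dissipation can hold.

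\textbf{The missing step.} It is the one in Steps 3--4 of Proposition \ref{3dlg-P3.2}. Put $\int\varrho_0v_y^2\,dy$ and $\int\varrho_0\Theta_y^2\,dy$ into $f$. Then differentiate (\ref{1dlg-E14}) in $y$ and test with $v_y$, and likewise (\ref{1dlg-E15}) with $\Theta_y$. This produces $(q-1)\int J^{1-q}|v_y|^{q-2}v_{yy}^2\,dy$, and its $\Theta$-analogue, directly on the left-hand side. It requires $v_0',\Theta_0'\in L^2$. That holds under the regularity $v_0'\in W^{1,q}$, $\Theta_0'\in W^{1,p}$ encoded in $\mathcal N_0$, but not under the literal hypothesis $v_0\in W^{1,q}$, $\Theta_0\in W^{1,p}$; you should make this explicit.

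\textbf{A smaller issue in uniqueness.} Applying the second inequality of Lemma \ref{2dlg-L2.3} to the $J$-differences gives only $|v_{2y}|^{q-1}|J_1^{-1}-J_2^{-1}|^{q-1}$. That is H\"older dependence on $J_1-J_2$ with exponent $q-1<1$. The resulting inequality $Y'\leqslant a(t)Y+b(t)Y^{q-1}$, $Y(0)=0$, has nontrivial solutions, so Gronwall does not yield uniqueness.

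Use the homogeneity $|v_y/J|^{q-2}v_y/J=J^{1-q}|v_y|^{q-2}v_y$ instead. The dependence on $J$ is then Lipschitz, since $J$ is bounded below. The first inequality of Lemma \ref{2dlg-L2.3} is applied only to $|v_{1y}|^{q-2}v_{1y}-|v_{2y}|^{q-2}v_{2y}$. The weight $(1+|v_{1y}|^2+|v_{2y}|^2)^{\frac{q-2}{2}}$ is then controlled because $\sup|v_{iy}|\in L^{2q}(0,T)$ by Lemma \ref{2dlg-L2.1}.
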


\begin{proof}
The proof is given by standard iterative method and the fixed-point theory. Let $M$ and $T$ be two positive constants with $T$ being suitably small, to be determined later by the quantities $\underline\varrho, \overline\varrho$ and $\mathcal{N}_0.$ Denote
\begin{equation*}
\begin{aligned}
X_{M,T}=\left\{(v,\Theta):\|v\|_{{L^q}(0,T;{W^{1,q}})}\leqslant M,\ \Big|\!\Big||v_y|^{\frac{q-2} 2}v_{yy}\Big|\!\Big|_{{L^2}(0,T;{L^2})}\leqslant M,\right.\\
\phantom{=\;\;}
\left.\|\Theta\|_{{L^p}(0,T;{W^{1,p}})}\leqslant M\ and\ \Big|\!\Big||\Theta_y|^{\frac{p-2} 2}\Theta_{yy}\Big|\!\Big|_{{L^2}(0,T;{L^2})}\leqslant M\right\}.\notag
\end{aligned}
\end{equation*}
Given $(v, \Theta)\in X_{M,T}$, it is easy to get that
\begin{equation*}
\begin{aligned}
\int_0^T\int|\Theta_{yy}|^pdyds
&\leqslant\left(\int_0^T\int\left(|\Theta_y|^{\frac{p-2} 2}|\Theta_{yy}|\right)^2dyds\right)^{\frac p 2}\left(\int_0^T\int|\Theta_y|^pdyds\right)^\frac{2-p}2
\end{aligned}
    \end{equation*}
and
\begin{equation*}
\begin{aligned}
\int_0^T\int|v_{yy}|^qdyds
&\leqslant\left(\int_0^T\int\left(|v_y|^{\frac{q-2} 2}|v_{yy}|\right)^2dyds\right)^{\frac q 2}\left(\int_0^T\int|v_y|^qdyds\right)^\frac{2-q}2.
\end{aligned}
    \end{equation*}
Then, $(v, \Theta)\in X_{M,T}$ implies that $v\in{L^q}(0,T;{W^{2,q}})$ and $\Theta\in{L^p}(0,T;{W^{2,p}}).$ It follows from (\ref{1dlg-E13}) that
\begin{equation*}
J-J_0=\int_0^t v_y ds\in C([0,T];W^{1,q})\mbox{ and }J_t=v_y\in L^q(0,T;W^{1,q}).
    \end{equation*}
According to  Lemma \ref{2dlg-L2.1} and H\"{o}lder inequality, one obtains that
\begin{equation*}
\begin{aligned}
\|J-J_0\|_{L^\infty}
&\leqslant C\left(\int_0^t\|v_y\|_{L^q}^qds\right)^\frac1 {2q}\left(\int_0^t\Big|\!\Big||v_y|^{\frac{q-2} 2}v_{yy}\Big|\!\Big|_{L^2}^2ds\right)^\frac1 {2q}\left(\int_0^t1ds\right)^\frac{q-1} q\\
&\leqslant CM^{\frac{2+q}{2q}}t^{\frac{q-1} q}\leqslant\frac1 4\underline J,
\end{aligned}
    \end{equation*}
as long as $0<t\leqslant T$ with
\begin{equation*}
T=\left(\frac{\underline J} {4CM^{\frac{2+q} {2q}}}\right)^{\frac q {q-1}}.
    \end{equation*}
So, $\frac{3}{ 4}\underline J\leqslant J\leqslant\frac{5}{ 4}\overline J$ on\ $\mathbb R \times [0,T].$

For given $(v, \Theta)\in X_{M,T},$ let $J$ be the unique solution to the initial-value problem
\begin{equation}\label{3dlg-E2}
\begin{cases}
J_t=v_y,\\
J|_{t=0}=J_0\\
\end{cases}
\end{equation}
and $\varrho$ be the unique solution to the initial-value problem
\begin{equation}\label{3dlg-E3}
\begin{cases}
 J\varrho=J_0\varrho_0,\\
\varrho|_{t=0}=\varrho_0.\\
\end{cases}
    \end{equation}
For $J$ and $\varrho$ determined by (\ref{3dlg-E2}) and (\ref{3dlg-E3}), it is deduced from the classical theory for quasi-linear parabolic equation that
\begin{eqnarray*}
\begin{cases}
V_t-\frac1 {J_0\varrho_0}\left(|\frac{V_y}J|^{q-2}\frac{V_y}J\right)_y+\frac1 {J_0\varrho_0}R(\varrho\vartheta)_y=0,\\
V|_{t=0}=v_0
\end{cases}
\end{eqnarray*}
and
\begin{eqnarray*}
\begin{cases}
\vartheta_t-\frac1 {J_0\varrho_0}\left(|\frac{\vartheta_y}J|^{p-2}\frac{\vartheta_y}J\right)_y+\frac1 {J_0\varrho_0}R\varrho\vartheta V_y=\frac1 {J_0\varrho_0}|\frac{V_y}J|^{q}J,\\
\vartheta|_{t=0}=\Theta_0
\end{cases}
\end{eqnarray*}
admit the unique solution $V$ and $\vartheta$ respectively, which possess the following properties
\begin{eqnarray*}
&&V\in L^q(0,T;W^{1,q}),\  |V_y|^{\frac{q-2} 2}V_{yy}\in L^2(0,T;L^2)\mbox{ and }V_t\in L^2(0,T;L^2);\\
&&\vartheta\in L^p(0,T;W^{1,p}),\  |\vartheta_y|^{\frac{p-2} 2}\vartheta_{yy}\in L^2(0,T;L^2)\mbox{ and }\vartheta_t\in L^2(0,T;L^2).
\end{eqnarray*}

As stated above, we do define a mapping ${\mathcal T}:~X_{M,T}\rightarrow X_{M,T}\mbox{ as }$
$${\mathcal T}(v,\Theta)=(V,\vartheta).$$
A standard argument shows that ${\mathcal  T}$ is a contractive mapping on $X_{M,T}$ for some $M$ and $T$ depending only on $\underline{\varrho},~\bar{\varrho}$ and ${\mathcal  N}_0.$ So, there is a unique fixed point to ${\mathcal  T}$ in $X_{M,T},$ which is denoted by $(v,\Theta).$ Thus, the problem (\ref{1dlg-E16})-(\ref{1dlg-E17}) admits a unique local strong solution $(J,\varrho, v,\Theta)$ on ${\mathbb R}\times [0, T].$ The details is completed by standard arguments, and we omit this process here.

In fact, the local strong solution $(J,\varrho, v,\Theta)$ to the problem (\ref{1dlg-E16})-(\ref{1dlg-E17}) on ${\mathbb R}\times[0, T]$ satisfies that
\begin{equation}\tag{[\bf H]}
\left\{\begin{array}{l}
\frac{3}{ 4}\underline J\leqslant J\leqslant\frac{5} {4}\overline J,\quad\varrho\geqslant C\underline{\varrho}>0\ on\ \mathbb R \times [0,T],\\
J-J_0\in C([0,T];W^{1,q}),\quad J_t\in L^q(0,T;L^q),\\
v\in C([0,T];W^{1,q}),\quad |v_y|^{\frac{q-2} 2}v_{yy}\in L^2(0,T;L^2),\quad v_t\in L^2(0,T;L^2),\\
\Theta\in C([0,T];W^{1,p}),\quad |\Theta_y|^{\frac{p-2} 2}\Theta_{yy}\in L^2(0,T;L^2),\quad \Theta_t\in L^2(0,T;L^2).\\
\end{array}
  \right.
    \end{equation}
\end{proof}

Next, we expect to derive the uniform bounds, which do not rely on the lower bound of the initial density $\varrho_0$. 

\begin{proposition}\label{3dlg-P3.2}
Let $p,q\in(1,2)$ and $\alpha<\min\{-\frac{q}{2(q-1)},-\frac{4-p}{2-p}\}.$ Suppose that a quaternion $(J,\varrho, v,\Theta)$ is a strong solution to the problem (\ref{1dlg-E16})-(\ref{1dlg-E17}) on ${\mathbb R}\times [0, T].$ Then it holds that
\begin{eqnarray}
&&\int\varrho_0^{\alpha}\Big(\varrho_0v^2+\varrho_0\Theta+\varrho_0^2\Theta^2+\varrho_0v_y^2+\varrho_0\Theta_y^2+|\frac{v_y}{J}|^qJ+\varrho_0|\frac{\Theta_y} J|^pJ+ J_y^2\Big)dy\nonumber\\
&&\leqslant CH_0(t)\bigg(1-\frac{q+1}{q-1}H_0^{\frac{q+1}{q-1}}(t)t\bigg)^{-\frac{q-1}{q+1}},\label{3dlg-E4}\\
&&\bigg(\int\varrho_0^{\alpha}\Big(\varrho_0v^2+\varrho_0\Theta+\varrho_0^2\Theta^2+\varrho_0v_y^2+\varrho_0\Theta_y^2+|\frac{v_y}{J}|^qJ+\varrho_0|\frac{\Theta_y} J|^pJ+ J_y^2\Big)dy\bigg)(t)\nonumber\\
&&+\int_0^t\int\varrho_0^\alpha\Big(|\frac{v_y}{J}|^qJ+\varrho_0|\frac{\Theta_y} J|^pJ+|\frac{v_y}{J}|^{q-2}\frac { v_{yy}^2}J+|\frac{\Theta_y} J|^{p-2}\frac {\Theta_{yy}^2}J+\varrho_0^{3}\Theta_t^2+\varrho_0v_t^2\Big)dyds\nonumber\\
&&\leqslant CH_0(t)+C\int_0^tH_0^{\frac{2q} {q-1}}(s)\bigg(1-\frac{q+1}{q-1}H_0^{\frac{q+1}{q-1}}(s)s\bigg)^{-\frac{2q}{q+1}}ds\label{3dlg-E5}
\end{eqnarray}
for all $t\in[0,\widetilde{t_0}]$ with $\widetilde{t_0}<T,$ where 
\begin{equation}\label{3dlg-E6}
\begin{aligned}
H_0&=\Big(\|\varrho_0^{\frac {\alpha+2} 2}v_0\|_{L^2}^2+\|\varrho_0^{\alpha+1}\Theta_0\|_{L^1}+\|\varrho_0^{\frac {\alpha+2} 2}\Theta_0\|_{L^2}^2+\|\varrho_0^{\frac {\alpha+1} 2}v_0^{\prime}\|_{L^2}^2+\|\varrho_0^{\frac {\alpha+1} 2}\Theta_0^{\prime}\|_{L^2}^2\\
&+\|\varrho_0^{\frac \alpha q}|\frac {v_0^{\prime}} {J_0}|{J_0}^{\frac 1 q}\|_{L^q}^q+\|\varrho_0^{\frac {\alpha+2} p}|\frac {\Theta_0^{\prime}} {J_0}|{J_0}^{\frac 1 p}\|_{L^p}^p+\|\varrho_0^{\frac \alpha 2}J_0^{\prime}\|_{L^2}^2\Big)+G_0t\notag
\end{aligned}
 \end{equation}
on $[0,\widetilde{t_0}]$ with
\begin{equation*}
G_0=C{\overline J}^{12}\Big(\frac1{\underline J}\Big)^{\frac{12q}{2-q}}{\overline \varrho}^{-\min\{\frac{(3q-2)\alpha+q}{2-q},\frac{8\alpha}{q-1}\}}
\left(\|(\varrho_0^\alpha)_y\|_{L^p}\|(\varrho_0^\alpha)_y\|_{L^\infty}\right)^{\frac{4q}{2-q}}+C\overline{\varrho}^{-\frac{2\alpha+1}{2(q+1)}}
\end{equation*}
and
\begin{equation}\label{3dlg-E7}
\frac{q+1}{q-1}H_0^{\frac{q+1}{q-1}}(\widetilde{t_0})\widetilde{t_0}<1.
    \end{equation}
\end{proposition}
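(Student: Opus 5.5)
Throughout, write $E(t)$ for the weighted functional on the left of (\ref{3dlg-E4}) and $D(t)$ for the dissipation integrand in (\ref{3dlg-E5}). The plan is to derive a differential inequality of the form
\begin{equation*}
\frac{d}{dt}E(t)+D(t)\leqslant G_0+C\,E(t)^{1+\sigma},\qquad \sigma=\frac{q+1}{q-1},
\end{equation*}
and then apply the local Gronwall inequality Lemma \ref{2dlg-L2.2} with $f=E$, $h\equiv G_0$, $c_0=C$. Then $H(t)=E(0)+G_0t$, which is $H_0(t)$ up to the constant $C$ absorbed from $E(0)\leqslant CH_0(0)$. The smallness condition (\ref{3dlg-E7}) is exactly the hypothesis $\sigma c_0H^\sigma(t_0)t_0<1$. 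This gives (\ref{3dlg-E4}). Integrating the differential inequality in time and inserting (\ref{3dlg-E4}) into the term $E^{1+\sigma}$ then gives (\ref{3dlg-E5}); the power $E^{2q/(q-1)}$ explains the exponents $H_0^{2q/(q-1)}$ and $-\frac{2q}{q+1}$ there.

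\textbf{Weighted estimates for $v$, $\Theta$ and $J_y$.} Since the solution is a strong solution with positive density (Proposition \ref{3dlg-P3.1}), all computations are justified.
\begin{itemize}
\item[(i)] Multiply (\ref{1dlg-E14}) by $J_0\varrho_0^{\alpha+1}v$ and integrate by parts. The dissipative term produces $\int\varrho_0^{\alpha}|v_y/J|^qJ$ plus a commutator $\int(\varrho_0^{\alpha+1})_y|v_y/J|^{q-2}(v_y/J)v$. This commutator is controlled using $(\varrho_0^\alpha)_y\in L^p\cap L^\infty$, $\varrho_0\leqslant\overline\varrho$, and Young's inequality.
\item[(ii)] Multiply (\ref{1dlg-E15}) by the weights $J_0\varrho_0^{\alpha+1}$ and $J_0\varrho_0^{\alpha+2}\Theta$ to get the $L^1$ and $L^2$ bounds for $\Theta$.
\item[(iii)] For the gradients, test (\ref{1dlg-E14}) with $J_0\varrho_0^{\alpha}v_t$ and (\ref{1dlg-E15}) with $J_0\varrho_0^{\alpha+2}\Theta_t$. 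This yields $\frac{d}{dt}\int\varrho_0^\alpha|v_y/J|^qJ$, $\frac{d}{dt}\int\varrho_0^{\alpha+2}|\Theta_y/J|^pJ$ and the terms $\int\varrho_0^{\alpha+1}v_t^2$, $\int\varrho_0^{\alpha+3}\Theta_t^2$.
\item[(iv)] Test the equations, rewritten in non-divergence form, with $\varrho_0^{\alpha}v_{yy}$ and $\varrho_0^{\alpha}\Theta_{yy}$. This gives the second-order dissipation $|v_y/J|^{q-2}v_{yy}^2/J$, $|\Theta_y/J|^{p-2}\Theta_{yy}^2/J$ and the $\varrho_0v_y^2$, $\varrho_0\Theta_y^2$ parts of $E$.
\item[(v)] Differentiate (\ref{1dlg-E13}) in $y$ and multiply by $\varrho_0^\alpha J_y$; this gives $\frac{d}{dt}\int\varrho_0^\alpha J_y^2\leqslant C\|\varrho_0^{\alpha/2}J_y\|\,\|\varrho_0^{\alpha/2}v_{yy}\|$.
\end{itemize}

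The term $v_{yy}$ in (v) is written through the equation in terms of $\varrho_0v_t$, the pressure gradient, and the factor $(q-1)|v_y/J|^{q-2}$. Because $q<2$, $|v_y|^{q-2}$ is singular where $v_y$ is small and large where it is small, so one cannot invert it pointwise naively.

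\textbf{Main obstacle: the singular nonlinear terms.} The hardest step is closing the estimates coming from $|v_y/J|^{q-2}$ and $|\Theta_y/J|^{p-2}$. Here I would use Lemma \ref{2dlg-L2.1} (with $\varphi'=v_y$ and parameters tuned to $q$) to bound $\|v_y\|_{L^\infty}$ by the product of $\int|v_y|^q$ and $\int|v_y|^{q-2}v_{yy}^2$. The weight $\varrho_0^\alpha$ is inserted using $\varrho_0\leqslant\overline\varrho$ with $\alpha<0$, so $\varrho_0^\alpha\geqslant\overline\varrho^\alpha$, which explains the negative powers of $\overline\varrho$ in $G_0$. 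I would then use Young's inequality to absorb half of the dissipation, leaving powers of $E$; the worst power should be $1+\frac{q+1}{q-1}$. Similarly, Lemma \ref{2dlg-L2.3} and H\"older's inequality in the form used in Proposition \ref{3dlg-P3.1}, $\int|v_{yy}|^q\leqslant(\int|v_y|^{q-2}v_{yy}^2)^{q/2}(\int|v_y|^q)^{(2-q)/2}$, convert weighted $v_{yy}$ norms into controlled quantities.

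The constraints $\alpha<-\frac{q}{2(q-1)}$ and $\alpha<-\frac{4-p}{2-p}$ should be exactly what makes the weight commutators $(\varrho_0^\alpha)_y/\varrho_0^\alpha$, with various exponents, integrable against the available norms. Those contributions independent of $E$ collect into $G_0$, with the factors $\overline J$ and $1/\underline J$ coming from the bounds on $J$. The bookkeeping of these exponents is where I expect the real difficulty. If a term fails to close, I would first try splitting into the regions $|v_y|\leqslant1$ and $|v_y|>1$, in the spirit of the first inequality of Lemma \ref{2dlg-L2.3}.
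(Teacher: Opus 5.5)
Your architecture is the paper's: the same weighted multipliers as Steps 1--7, Lemma \ref{2dlg-L2.1} for $\|v_y\|_{L^\infty}$ and $\|\Theta_y\|_{L^\infty}$, Young's inequality down to $\frac{d}{dt}E+D\leqslant E^{\frac{2q}{q-1}}+G_0$, and Lemma \ref{2dlg-L2.2} with $\sigma=\frac{q+1}{q-1}$. However, the step that makes the degenerate terms close is missing, and the substitute you offer does not work.

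\textbf{The gap.} Second derivatives appear without the weight $|v_y/J|^{q-2}$ in several places:
\begin{itemize}
\item $\int\varrho_0^\alpha J_yv_{yy}\,dy$ in your step (v);
\item the pressure term $\int\varrho_0^\alpha(\varrho\Theta)_yv_{yy}\,dy$ from (iv);
\item the temperature analogues $\int\varrho\Theta v_y\varrho_0^\alpha\Theta_{yy}\,dy$ and $\int\varrho_0^\alpha\Theta_{yy}|v_y/J|^qJ\,dy$.
\end{itemize}
You claim $|v_y|^{q-2}$ ``cannot be inverted pointwise'' and fall back on the H\"older bound of Proposition \ref{3dlg-P3.1}. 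That only gives $v_{yy}\in L^q$. This cannot be paired with $J_y$, which is controlled only in weighted $L^2$, because $\frac{q}{q-1}>2$. So step (v), as proposed, does not close.

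\textbf{The missing observation.} It is the reverse of your worry: for $q<2$ the degenerate weight is bounded \emph{below}, $|v_y/J|^{q-2}\geqslant\|v_y/J\|_{L^\infty}^{q-2}$. Hence
\[
\int\varrho_0^\alpha v_{yy}^2\,dy\leqslant C(\underline J,\overline J)\,\|v_y\|_{L^\infty}^{2-q}\int\varrho_0^\alpha\Big|\frac{v_y}{J}\Big|^{q-2}\frac{v_{yy}^2}{J}\,dy .
\]
Combine this with Lemma \ref{2dlg-L2.1} in the form $\|v_y\|_{L^\infty}\leqslant C(\int v_y^2dy)^{\frac1{q+2}}(\int|v_y|^{q-2}v_{yy}^2dy)^{\frac1{q+2}}$, inserting weights via $\varrho_0\leqslant\overline\varrho$ and $\alpha+1<0$. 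This gives
\[
\frac{d}{dt}\int\varrho_0^\alpha J_y^2dy\leqslant CE^{\frac2{q+2}}D^{\frac2{q+2}}\leqslant\eta D+C(\eta)E^{\frac2q},
\]
with $E^{\frac2q}\leqslant1+E^{\frac{2q}{q-1}}$. This is exactly (\ref{3dlg-E19}). The same device, using $\|\Theta_y\|_{L^\infty}^{\frac{2-p}{2}}$, handles the $\Theta_{yy}$ terms. Solving the momentum equation for $v_{yy}$ would also work, precisely because the inverse factor $|v_y/J|^{2-q}$ is bounded by $\|v_y/J\|_{L^\infty}^{2-q}$. No splitting into $|v_y|\leqslant1$ and $|v_y|>1$ is needed.

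\textbf{Bookkeeping error in (i)--(ii).} Equations (\ref{1dlg-E14}) and (\ref{1dlg-E15}) already carry $\varrho_0v_t$ and $\varrho_0\Theta_t$. So your multipliers $J_0\varrho_0^{\alpha+1}v$, $J_0\varrho_0^{\alpha+1}$ and $J_0\varrho_0^{\alpha+2}\Theta$ produce
\begin{itemize}
\item $\frac{d}{dt}\int J_0\varrho_0^{\alpha+2}v^2$, with dissipation $\int\varrho_0^{\alpha+1}|v_y/J|^qJ$ rather than the $\varrho_0^{\alpha}$-weighted one you state;
\item $\varrho_0^{\alpha+2}\Theta$ and $\varrho_0^{\alpha+3}\Theta^2$.
\end{itemize}
Each is one power of $\varrho_0$ off from $E$. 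The correct multipliers, as in Steps 1--2 of the paper, are $J_0\varrho_0^\alpha v$, $\frac12J_0\varrho_0^\alpha$ and $J_0\varrho_0^{\alpha+1}\Theta$.

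\textbf{Constant in (\ref{3dlg-E7}).} To obtain (\ref{3dlg-E7}) with the constant $\frac{q+1}{q-1}$, arrange the Young splittings so that $E^{\frac{2q}{q-1}}$ enters with coefficient $1$ and all other constants go into $G_0$, as the paper does. With $c_0=C$, Lemma \ref{2dlg-L2.2} only yields the condition $\frac{q+1}{q-1}C H^{\frac{q+1}{q-1}}t<1$.
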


\begin{proof}
The proof is divided into several steps.

{\sl\textit{Step 1.}} Multiplying equation (\ref{1dlg-E14}) and  (\ref{1dlg-E15}) by $\varrho_0^\alpha v$ and $\frac{1}{2}\varrho_0^\alpha$ respectively, adding the resultant and integrating over $\mathbb R$, one obtains that
\begin{eqnarray}
&&\frac1 2\frac d {dt}\int\Big(\varrho_0^{\alpha+1}v^2+\varrho_0^{\alpha+1}\Theta\Big)dy+\frac1 2\int\varrho_0^\alpha|\frac{v_y}{J}|^qJdy\nonumber\\
&&=-\int|\frac{v_y}{J}|^{q-2}\frac {v_y}J(\varrho_0^\alpha)_yvdy+\int R\varrho\Theta(\varrho_0^\alpha)_yvdy-\frac{1}{2}\int|\frac{\Theta_y} J|^{p-2}\frac {\Theta_y}J(\varrho_0^\alpha)_ydy\nonumber\\
&&+\frac{1}{2}\int R\varrho\Theta v_y \varrho_0^\alpha dy:=\sum_{i=1}^4I_i,\label{3dlg-E8}
    \end{eqnarray}
where
\begin{eqnarray*}
&|I_1|&=\Big|-\int|\frac{v_y}{J}|^{q-2}\frac {v_y}J(\varrho_0^\alpha)_yvdy\Big|\\
&&\leqslant C\overline\varrho^{-\frac{(3q-2)\alpha+q}{2q}}\Big(\frac1 {\underline J}\Big)^{\frac{q-1}{q}}\|(\varrho_0^\alpha)_y\|_{L^\frac{2q}{2-q}}\Big(\int\varrho_0^{\alpha+1}v^2dy\Big)^{\frac1 2}\Big(\int\varrho_0^\alpha|\frac{v_y}{J}|^qJdy\Big)^{\frac{q-1}{q}}\\
&&\leqslant \Big(\int\varrho_0^{\alpha+1}v^2dy\Big)^{\frac{2q}{q-1}}+\epsilon\int\varrho_0^\alpha|\frac{v_y}{J}|^qJdy+C(\epsilon)\left(\overline\varrho^{-\frac{(3q-2)\alpha+q}{2q}}\Big(\frac1 {\underline J}\Big)^{\frac{q-1}{q}}\|(\varrho_0^\alpha)_y\|_{L^\frac{2q}{2-q}}\right)^{\frac{4q}{5-q}},\\
&|I_2|&=\Big|\int R\varrho\Theta(\varrho_0^\alpha)_yvdy\Big|\\
&&\leqslant C\overline\varrho^{-\frac{2\alpha+1} 2}\Big(\frac1 {\underline J}\Big)\|(\varrho_0^\alpha)_y\|_{L^\infty}\Big(\int\varrho_0^{\alpha+2}\Theta^2dy\Big)^{\frac1 2}\Big(\int\varrho_0^{\alpha+1}v^2dy\Big)^{\frac1 2}\\
&&\leqslant \Big(\int\varrho_0^{\alpha+2}\Theta^2dy+\int\varrho_0^{\alpha+1}v^2dy\Big)^{\frac{2q} {q-1}}+C\overline\varrho^{-\frac{(2\alpha+1)q} {q+1}}\Big(\frac1 {\underline J}\Big)^{\frac {2q} {q+1}}\|(\varrho_0^\alpha)_y\|_{L^\infty}^{\frac{2q} {q+1}},\\
&|I_3|&=\Big|-\int|\frac{\Theta_y} J|^{p-2}\frac {\Theta_y}J(\varrho_0^\alpha)_ydy\Big|\\
&&\leqslant C\overline\varrho^{-(\alpha+1)\frac{p-1} p}\Big(\frac1 {\underline J}\Big)^{\frac{p-1} p}\|(\varrho_0^\alpha)_y\|_{L^p}\Big(\int\varrho_0^{\alpha+1}|\frac{\Theta_y} J|^pJdy\Big)^{\frac{p-1} p}\\
&&\leqslant C(\eta)\overline\varrho^{-(\alpha+1)(p-1)}\Big(\frac1 {\underline J}\Big)^{p-1}\|(\varrho_0^\alpha)_y\|_{L^p}^p+\eta\int\varrho_0^{\alpha+1}|\frac{\Theta_y} J|^pJdy,\\
&|I_4|&=\Big|\frac{1}{2}\int R\varrho\Theta v_y \varrho_0^\alpha dy\Big|\\
&&\leqslant C\Big(\frac1 {\underline J}\Big)\overline{J}\left(\int\varrho_0^{\alpha+1}\Theta dy\right)
\left(\int\varrho_0^{\alpha+1}v_y^2 dy\right)^{\frac{1}{q+2}}\left(\int\varrho_0^{\alpha}\left|\frac{v_y}{J}\right|^{q-2}\frac{v_{yy}^2}{J} dy\right)^{\frac{1}{q+2}}\\
&&\leqslant\Big(\int\varrho_0^{\alpha+1}\Theta dy+\int\varrho_0^{\alpha+1}v_y^2dy\Big)^{\frac{2q} {q-1}}+\eta\int\varrho_0^{\alpha}\left|\frac{v_y}{J}\right|^{q-2}\frac{v_{yy}^2}{J} dy+\
C(\eta)\left(\Big(\frac1 {\underline J}\Big)\overline{J}\right)^{\frac{2q(q+2)}{q^2-2q+7}}
\end{eqnarray*}
hold for any fixed $\epsilon\in(0,1)$ and $\eta\in(0,1).$ Thus, one obtains that
\begin{eqnarray}\label{3dlg-E9}
&&\frac d {dt}\int\Big(\varrho_0^{\alpha+1}v^2+\varrho_0^{\alpha+2}\Theta\Big)dy+\int\varrho_0^\alpha|\frac{v_y}{J}|^qJdy\\
&&\leqslant \Big(\int\varrho_0^{\alpha+1}v^2dy+\int\varrho_0^{\alpha+1}\Theta dy+\int\varrho_0^{\alpha+1}v_y^2dy+\int\varrho_0^{\alpha+2}\Theta^2dy\Big)^{\frac{2q}{q-1}}\nonumber\\
&&+\eta\int\varrho_0^{\alpha+1}|\frac{\Theta_y} J|^pJdy+\eta\int\varrho_0^{\alpha}\left|\frac{v_y}{J}\right|^{q-2}\frac{v_{yy}^2}{J} dy+C(\eta)\overline\varrho^{-\frac{(3q-2)\alpha+q}{2}}\overline{J}^3\Big(\frac1 {\underline J}\Big)^3\|(\varrho_0^\alpha)_y\|_{L^\infty}^{2}\|(\varrho_0^\alpha)_y\|_{L^p}^2\nonumber
    \end{eqnarray}
holds for any fixed $\eta\in(0,1).$

{\sl\textit{Step 2.}} Multiplying equation (\ref{1dlg-E15}) by $\varrho_0^{\alpha+1}\Theta$ and integrating the resultant over $\mathbb R,$ one gets that
\begin{eqnarray}\label{3dlg-E10}
&&\frac1 2\frac d {dt}\int\varrho_0^{\alpha+2}\Theta^2dy+\int\varrho_0^{\alpha+1}|\frac{\Theta_y} J|^pJdy\nonumber\\
&&=-\int|\frac{\Theta_y} J|^{p-2}\frac {\Theta_y}J(\varrho_0^{\alpha+1})_y\Theta dy-\int\varrho_0^{\alpha+1}R\varrho\Theta^2{v_y}dy-\int\varrho_0^{\alpha+1}\Theta|\frac{v_y}J|^{q}Jdy\nonumber\\
&&:=\sum_{i=1}^3{\mathbb A}_i.
\end{eqnarray}
Using Lemma \ref{2dlg-L2.1}, one estimates each term on the right hand of (\ref{3dlg-E10}) as follows
\begin{eqnarray*}
&|{\mathbb A}_1|&=\Big|-\int|\frac{\Theta_y} J|^{p-2}\frac {\Theta_y}J(\varrho_0^{\alpha+1})_y\Theta dy\Big|\nonumber\\
&&\leqslant C\overline\varrho^{{\frac{-\alpha} {2}}-{\frac{(1+\alpha)(p-1)}{p}}}\Big(\frac1 {\underline J}\Big)^{\frac{p-1}p}\|(\varrho_0^{\alpha})_y\|_{L^{\frac{2p}{2-p}}}\Big(\int\varrho_0^{\alpha+1}|\frac{\Theta_y} J|^pJdy\Big)^{\frac{p-1}{p}}\Big(\int\varrho_0^{\alpha+2}\Theta^2dy\Big)^{\frac1 2}\nonumber\\
&&\leqslant \Big(\int\varrho_0^{\alpha+2}\Theta^2dy\Big)^{\frac{2q} {q-1}}+\epsilon\int\varrho_0^{\alpha+1}|\frac{\Theta_y} J|^pJdy+ C(\epsilon)\overline\varrho^{-\frac{(\alpha-1)4pq}{p-pq+4q}}\Big(\frac1 {\underline J}\Big)^{\frac{4q(p-1)}{p-pq+4q}}\|(\varrho_0^\alpha)_y\|_{L^{\frac{2p}{2-p}}}^{\frac{4pq} {p-pq+4q}},\\
&|{\mathbb A}_2|&=\Big|-\int\varrho_0^{\alpha+1}R\varrho\Theta^2{v_y}dy\Big|\nonumber\\
&&\leqslant \Big(\int\varrho_0^{\alpha+2}\Theta^2dy\Big)^{\frac{2q} {q-1}}+\eta\int\varrho_0^\alpha|\frac{v_y}{J}|^qJdy+\eta\int\varrho_0^\alpha|\frac{v_y}{J}|^{q-2}\frac {v_{yy}^2}Jdy+ C(\eta)\overline\varrho^{\frac{-2\alpha} {q-1}}\Big(\overline J\Big)^2,\nonumber\\
&|{\mathbb A}_3|&=\left|-\int\varrho_0^{\alpha+1}\Theta|\frac{v_y}J|^{q}Jdy\right|\\
&&\leqslant C\overline{\varrho}\left(\int\varrho_0^{\alpha}|\frac{v_y}J|^{q}Jdy\right)\|\Theta\|_{L^\infty}\\
&&\leqslant C\overline{\varrho}^{-\frac{2\alpha+1}{4}}\left(\int\varrho_0^{\alpha}|\frac{v_y}J|^{q}Jdy\right)
\left(\int\varrho_0^{\alpha+1}\Theta_y^2dy\right)^{\frac14}
\left(\int\varrho_0^{\alpha+2}\Theta^2dy\right)^{\frac14}\\
&&\leqslant
\left(\int\varrho_0^{\alpha+1}\Theta_y^2dy+\int\varrho_0^{\alpha+2}\Theta^2dy+\int\varrho_0^{\alpha}|\frac{v_y}J|^{q}Jdy\right)^{\frac{2q}{q-1}}
+C\left(\overline{\varrho}^{-\frac{2\alpha+1}{4}}\right)^{\frac{2q}{q+1}}
    \end{eqnarray*}
hold for any fixed $\epsilon\in(0,1)$ and $\eta\in(0,1).$  So, it is deduced from (\ref{3dlg-E10})
that
\begin{eqnarray}\label{3dlg-E11}
&&\frac d {dt}\int\varrho_0^{\alpha+2}\Theta^2dy+\int\varrho_0^{\alpha+1}|\frac{\Theta_y} J|^pJdy\nonumber\\
&&\leqslant \left(\int\varrho_0^{\alpha+1}\Theta_y^2dy+\int\varrho_0^{\alpha+2}\Theta^2dy+\int\varrho_0^{\alpha}|\frac{v_y}J|^{q}Jdy\right)^{\frac{2q}{q-1}}
+\eta\int\varrho_0^\alpha|\frac{v_y}{J}|^qJdy\nonumber\\
&&+\eta\int\varrho_0^\alpha|\frac{v_y}{J}|^{q-2}\frac {v_{yy}^2}Jdy+C(\eta)\overline\varrho^{-\frac{2\alpha}{q-1}}\Big(\frac1{\underline J}\Big)^{2}{\overline J}^{2q}\|(\varrho_0^\alpha)_y\|_{L^{\frac{2p}{2-p}}}^{2q}+C\overline{\varrho}^{-\frac{2\alpha+1}{2(q+1)}}
    \end{eqnarray}
holds for any fixed $\eta\in(0,1).$

{\sl\textit{Step 3.}} Differentiating (\ref{1dlg-E14}) with respect to $y,$ multiplying both sides of  the resultant by $\varrho_0^{\alpha}v_y$ and integrating  over $\mathbb R$, one finds that
\begin{eqnarray}\label{3dlg-E12}
&&\frac1 2\frac d {dt}\int\varrho_0^{\alpha+1}v_y^2dy+(q-1)\int\varrho_0^\alpha|\frac{v_y}{J}|^{q-2}\frac {v_{yy}^2}Jdy\nonumber\\
&&=-\int\varrho_{0y}\varrho_0^{\alpha}v_tv_ydy-(q-1)\int|\frac{v_y}{J}|^{q-2}\frac {v_{yy}} {J}(\varrho_0^\alpha)_yv_ydy+(q-1)\int|\frac{v_y}{J}|^{q}J_y (\varrho_0^\alpha)_ydy\nonumber\\
&&+(q-1)\int\varrho_0^{\alpha}|\frac{v_y}{J}|^{q-2}\frac {{v_y}{J_y}} {J^2}v_{yy}dy+\int R(\varrho\Theta)_y(\varrho_0^\alpha)_{y}v_ydy+\int R\varrho_0^\alpha(\varrho\Theta)_yv_{yy}dy\nonumber\\
&&:=\sum_{i=1}^6{\mathbb B}_i.
\end{eqnarray}
First, ${\mathbb B}_1$-${\mathbb B}_4$ are estimated as follows
\begin{eqnarray*}
&|{\mathbb B}_1|&=\Big|-\int\varrho_{0y}\varrho_0^{\alpha}v_tv_ydy\Big|\\
&&\leqslant C\overline\varrho^{-\alpha}\|(\varrho_0^{\alpha})_y\|_{L^\infty}\Big(\int\varrho_0^{\alpha+1}v_t^2dy\Big)^{\frac1 2}\Big(\int\varrho_0^{\alpha+1}v_y^2dy\Big)^{\frac1 2}\\
&&\leqslant \Big(\int\varrho_0^{\alpha+1}v_y^2dy\Big)^{\frac{2q}{q-1}}
+\eta\int\varrho_0^{\alpha+1}v_t^2dy+C(\eta)\overline\varrho^{-\frac{4q\alpha}{1+q}}\|(\varrho_0^\alpha)_y\|_{L^\infty}^{\frac{4q}{1+q}},\\
&|{\mathbb B}_2|&=\Big|-\int(q-1)|\frac{v_y}{J}|^{q-2}\frac {v_{yy}} {J}(\varrho_0^\alpha)_yv_ydy\Big|\\
&&\leqslant C\overline\varrho^{-\frac{\alpha}2}\Big(\frac1 {\underline J}\Big)^{\frac {q-1} {2}}\|(\varrho_0^{\alpha})_y\|_{L^2}\Big(\int\varrho_0^\alpha|\frac{v_y}{J}|^{q-2}\frac {v_{yy}^2}Jdy\Big)^{\frac1 2}\|v_y\|_{L^\infty}^{\frac q 2}\\
&&\leqslant \Big(\int\varrho_0^{\alpha+1}v_y^2dy\Big)^{\frac{2q}{q-1}}+\epsilon\int\varrho_0^\alpha|\frac{v_y}{J}|^{q-2}\frac {v_{yy}^2}Jdy\\
&&+C(\epsilon)\overline\varrho^{-\frac{16\alpha}{5-q}}\Big(\frac1 {\underline J}\Big)^{\frac {2(q-1)(q+2)}{5-q}}{\overline J}^{\frac {4(q-1)}{5-q}}\|(\varrho_0^\alpha)_y\|_{L^2}^{\frac{2(q+2)} {5-q}},\\
&|{\mathbb B}_3|&=\Big|\int(q-1)|\frac{v_y}{J}|^{q}J_y (\varrho_0^\alpha)_ydy\Big|\\
&&\leqslant C\overline\varrho^{-\frac{2\alpha+1}2}\left(\frac{1}{\underline{J}}
\right)^{q}\|(\varrho_0^{\alpha})_y\|_{L^\infty}\Big(\int\varrho_0^\alpha J_y^2dy\Big)^{\frac1 2}\Big(\int\varrho_0^{\alpha+1} v_y^2dy\Big)^{\frac1 2}\|v_y\|_{L^\infty}^{q-1}\\
&&\leqslant \Big(\int\varrho_0^{\alpha} J_y^2dy+\int\varrho_0^{\alpha+1}v_y^2dy\Big)^{\frac{2q} {q-1}}+\epsilon\int\varrho_0^\alpha|\frac{v_y}{J}|^{q-2}\frac {v_{yy}^2}Jdy\\
&&+C(\epsilon)\left(\overline\varrho^{-\frac{2\alpha+1}2}\left(\frac{1}{\underline{J}}
\right)^{q}{\overline J}^{\frac{(q-1)^2}{q+2}}\|(\varrho_0^{\alpha})_y\|_{L^\infty}\right)^{\frac{4q(q+2)}{17q-q^2+2}},\\
&|{\mathbb B}_4|&=\Big|\int\varrho_0^{\alpha}(q-1)|\frac{v_y}{J}|^{q-2}\frac {{v_y}{J_y}} {J^2}v_{yy}dy\Big|\\
&&\leqslant C\Big(\frac1 {\underline J}\Big)^{\frac {q+1}{2}}\Big(\int\varrho_0^\alpha|\frac{v_y}{J}|^{q-2}\frac {v_{yy}^2}Jdy\Big)^{\frac1 2}\Big(\int\varrho_0^\alpha J_y^2dy\Big)^{\frac1 2}\|v_y\|_{L^\infty}^{\frac q 2}\\
&&\leqslant \Big(\int\varrho_0^{\alpha} J_y^2dy+\int\varrho_0^{\alpha+1}v_y^2dy\Big)^{\frac{2q} {q-1}}+\epsilon\int\varrho_0^\alpha|\frac{v_y}{J}|^{q-2}\frac {v_{yy}^2}Jdy\\
&&+C(\epsilon)\left(\Big(\frac1 {\underline J}\Big)^{\frac {q+1}{2}}\overline{\varrho}^{-\frac{q(2\alpha+1)}{2(q+2)}}{\overline J}^{\frac{q(q-1)}{2(q+2)}}\right)^{\frac{2q(q+2)}{2q-q^2+1}}
    \end{eqnarray*}
hold for any fixed $\epsilon\in(0,1)$ and $\eta\in(0,1).$ For ${\mathbb B}_5,$ one obtains that
\begin{eqnarray*}
&|{\mathbb B}_5|&=\Big|\int R(\varrho\Theta)_y(\varrho_0^\alpha)_{y}v_ydy\Big|\\
&&\leqslant R\left(
\Big|\int\frac{ 1}{ J}\varrho_{0y}\Theta(\varrho_0^\alpha)_{y}v_ydy\Big|+\Big|\int \frac 1{J^2}\Theta \varrho_0J_y(\varrho_0^\alpha)_{y}v_ydy\Big|+\Big|\int \frac{ 1}{ J}\varrho_0\Theta_y(\varrho_0^\alpha)_{y}v_ydy\Big|\right)\\
&&:=\sum_{n=1}^3|N_n|,
\end{eqnarray*}
where
\begin{eqnarray*}
&|N_1|&=R\Big|\int\frac{ 1}{ J}\varrho_{0y}\Theta(\varrho_0^\alpha)_{y}v_ydy\Big|\\
&&\leqslant C\overline\varrho^{-\frac{4\alpha+1}2}\overline{J}\Big(\frac1 {\underline J}\Big)\|(\varrho_0^{\alpha})_y\|^2_{L^\infty}\Big(\int\varrho_0^{\alpha+2}\Theta^2dy\Big)^{\frac1 2}\Big(\int\varrho_0^{\alpha+1}v_y^2dy\Big)^{\frac1 2}\\
&&\leqslant \Big(\int\varrho_0^{\alpha+2}\Theta^2dy+\int\varrho_0^{\alpha+1}v_y^2dy\Big)^{\frac{2q}{q-1}}+C\left(\overline\varrho^{-\frac{4\alpha+1}2}\overline{J}\Big(\frac1 {\underline J}\Big)\|(\varrho_0^{\alpha})_y\|^2_{L^\infty}\right)^{\frac{2q}{q+1}},\\
&|N_2|&=R\Big|\int \frac 1{J^2}\Theta \varrho_0J_y(\varrho_0^\alpha)_{y}v_ydy\Big|\\
&&\leqslant C\overline\varrho^{-\alpha}\Big(\frac1 {\underline J}\Big)^2\overline{J}\|(\varrho_0^{\alpha})_y\|_{L^\infty}\Big(\int\varrho_0^{\alpha+2}\Theta^2dy\Big)^{\frac1 2}\Big(\int\varrho_0^{\alpha} J_y^2dy\Big)^{\frac1 2}\|v_y\|_{L^\infty}\\
\\&&\leqslant\Big(\int\varrho_0^{\alpha} J_y^2dy+\int\varrho_0^{\alpha+2}\Theta^2dy+\int\varrho_0^{\alpha+1}v_y^2dy\Big)^{\frac{2q} {q-1}}+\epsilon\int\varrho_0^\alpha|\frac{v_y}{J}|^{q-2}\frac {v_{yy}^2}Jdy\\
&&+C(\epsilon)\left(\overline\varrho^{-\frac{\alpha(q+4)+1}{q+2}}{\overline J}^{\frac{2q+1}{q+2}}\Big(\frac1 {\underline J}\Big)^2\|(\varrho_0^{\alpha})_y\|_{L^\infty}\right)^{\frac{2q(q+2)}{q^2+3}},\\
&|N_3|&=R\Big|\int \frac{ 1}{ J}\varrho_0\Theta_y(\varrho_0^\alpha)_{y}v_ydy\Big|\\
&&\leqslant C\overline{J}\overline\varrho^{-\alpha}\Big(\frac1 {\underline J}\Big)\|(\varrho_0^{\alpha})_y\|_{L^\infty}\Big(\int\varrho_0^{\alpha+1}\Theta_y^2dy\Big)^{\frac1 2}\Big(\int\varrho_0^{\alpha+1}v_y^2dy\Big)^{\frac1 2}\\
&&\leqslant \Big(\int\varrho_0^{\alpha+1}\Theta_y^2dy+\int\varrho_0^{\alpha+1}v_y^2dy\Big)^{\frac{2q} {q-1}}+C\left(\overline\varrho^{-\alpha}\overline{J}\Big(\frac1 {\underline J}\Big)\|(\varrho_0^\alpha)_y\|_{L^\infty}\right)^{\frac{2q} {q+1}}.
\end{eqnarray*}
For ${\mathbb B}_6,$ one estimates as follows
\begin{eqnarray*}
&|{\mathbb B}_6|&=\Big|\int R\varrho_0^\alpha(\varrho\Theta)_yv_{yy}dy\Big|\\
&&\leqslant R\left(
\Big|\int\frac{ 1}{ J}\varrho_{0y}\varrho_0^\alpha\Theta v_{yy}dy\Big|+\Big|\int \frac 1{J^2}\varrho_0J_y\varrho_0^\alpha\Theta v_{yy}dy\Big|+\Big|\int \frac{ 1}{ J}\varrho_0\varrho_0^\alpha \Theta_yv_{yy}dy\Big|\right)\\
&&:=\sum_{k=1}^3|{\widetilde N}_k|,
\end{eqnarray*}
where
\begin{eqnarray*}
&|{\widetilde N}_1|&=R\Big|\int\frac{ 1}{ J}\varrho_{0y}\varrho_0^\alpha\Theta v_{yy}dy\Big|\\
&&\leqslant C\overline{\varrho}^{-(\alpha+1)}\Big(\frac1 {\underline J}\Big)^{\frac{q-1}2}\|(\varrho_0^{\alpha})_y\|_{L^\infty}
\overline{J}\Big(\int\varrho_0^{\alpha+2}\Theta^2dy\Big)^{\frac12}\Big(\int\varrho_0^\alpha|\frac{v_y}{J}|^{q-2}\frac {v_{yy}^2}Jdy\Big)^{\frac1 2}\|v_y\|_{L^\infty}^{\frac{2-q}2}\\
&&\leqslant \left(\int\varrho_0^{\alpha+1}v_y^2dy+\int\varrho_0^{\alpha+2}\Theta^2dy\right)^{\frac{2q}{q-1}}+\epsilon\int\varrho_0^\alpha|\frac{v_y}{J}|^{q-2}\frac {v_{yy}^2}Jdy\\
&&+C(\epsilon)\left(\overline{\varrho}^{-\frac{2(2\alpha+1)}{q+2}}\overline{J}^{\frac{(q-1)(2-q)}{2(q+2)}}\Big(\frac1 {\underline J}\Big)^{\frac{q-1}2}\|(\varrho_0^{\alpha})_y\|_{L^\infty}\right)^{\frac{q(q+2)}{q^2-q+1}},\\
&|{\widetilde N}_2|&=\Big|\int \frac 1{J^2}\varrho_0J_y\varrho_0^\alpha\Theta v_{yy}dy\Big|\\
&&\leqslant C\overline\varrho\Big(\frac1 {\underline J}\Big)^{\frac{7-q}2}\Big(\int\varrho_0^{\alpha} J_y^2dy\Big)^{\frac1 2}\Big(\int\varrho_0^\alpha|\frac{v_y}{J}|^{q-2}\frac {v_{yy}^2}Jdy\Big)^{\frac1 2}\|v_y\|_{L^\infty}^{\frac{2-q}2}\|\Theta\|_{L^\infty}\\
&&\leqslant \left(\int\varrho_0^{\alpha+1}v_y^2dy+\int\varrho_0^{\alpha+2}\Theta^2dy+\int\varrho_0^{\alpha+1}\Theta_y^2dy+\int\varrho_0^{\alpha} J_y^2dy\right)^{\frac{2q}{q-1}}+\epsilon\int\varrho_0^\alpha|\frac{v_y}{J}|^{q-2}\frac {v_{yy}^2}Jdy\\
&&+C(\epsilon)\left(\overline{\varrho}^{-\frac{12\alpha-2q\alpha-3q+2}{4(q+2)}}\overline{J}^{\frac{(q-1)(2-q)}{2(q+2)}}\Big(\frac1 {\underline J}\Big)^{\frac{3-q}2}\right)^{\frac{4q(q+2)}{5q^2-7q+6}},\\
&|{\widetilde N}_3|&=\Big|\int \frac{ 1}{ J}\varrho_0\varrho_0^\alpha\Theta_y v_{yy}dy\Big|\\
&&\leqslant C\overline\varrho^{\frac1 2}\Big(\frac1 {\underline J}\Big)^{\frac{5-q}2}\Big(\int\varrho_0^{\alpha+1}\Theta_y^2dy\Big)^{\frac1 2}\Big(\int\varrho_0^\alpha|\frac{v_y}{J}|^{q-2}\frac {v_{yy}^2}Jdy\Big)^{\frac1 2}\|v_y\|_{L^\infty}^{\frac{2-q}2}\\
&&\leqslant \left(\int\varrho_0^{\alpha+1}v_y^2dy+\int\varrho_0^{\alpha+1}\Theta_y^2dy\right)^{\frac{2q}{q-1}}+\epsilon\int\varrho_0^\alpha|\frac{v_y}{J}|^{q-2}\frac {v_{yy}^2}Jdy\\
&&+C(\epsilon)\left(\overline{\varrho}^{-\frac{2(2-q)\alpha-2q}{q+2}}\overline{J}^{\frac{(q-1)(2-q)}{2(q+2)}}\Big(\frac1 {\underline J}\Big)^{\frac{5-q}2}\right)^{\frac{q(q+2)}{q^2-q+1}}
\end{eqnarray*}
hold for any fixed $\epsilon\in(0,1)$ and $\eta\in(0,1).$ Summary, one gets that
\begin{eqnarray}\label{3dlg-E13}
&&\frac d {dt}\int\varrho_0^{\alpha+1}v_y^2dy+\int\varrho_0^\alpha|\frac{v_y}{J}|^{q-2}\frac {v_{yy}^2}Jdy\nonumber\\
&&\leqslant \Big(\int\varrho_0^{\alpha+2}\Theta^2dy+\int\varrho_0^{\alpha} J_y^2dy+\int\varrho_0^{\alpha+1}\Theta_y^2dy+\int\varrho_0^{\alpha+1}v_y^2dy\Big)^{\frac{2q} {q-1}}+\eta\int\varrho_0^{\alpha+1}v_t^2dy\nonumber\\
&&+C(\eta)\overline\varrho^{-5\alpha}{\overline J}^{8}\|(\varrho_0^\alpha)_y\|_{L^\infty}^{4}\left(\frac{1}{\underline{J}}\right)^{12}.
\end{eqnarray}
holds for any given $\eta\in(0,1).$

{\sl\textit{Step 4.}} Differentiating (\ref{1dlg-E15}) with respect to $y,$  multiplying both sides of  the resultant by $\varrho_0^{\alpha}\Theta_y$ and  integrating it over $\mathbb R$, one gets that
\begin{eqnarray}\label{3dlg-E14}
&&\frac1 2\frac d {dt}\int\varrho_0^{\alpha+1}\Theta_y^2dy+(p-1)\int\varrho_0^\alpha|\frac{\Theta_y} J|^{p-2}\frac {\Theta_{yy}^2}Jdy\nonumber\\
&&=-\int\varrho_{0y}\varrho_0^{\alpha}\Theta_t\Theta_ydy-\int(p-1)|\frac{\Theta_y} J|^{p-2}\frac {\Theta_{yy}} {J}(\varrho_0^\alpha)_y\Theta_ydy+\int(p-1)|\frac{\Theta_y} J|^{p}J_y (\varrho_0^\alpha)_ydy\nonumber\\
&&+\int\varrho_0^{\alpha}(p-1)|\frac{\Theta_y} J|^{p-2}\frac {{\Theta_y}{J_y}} {J^2}\Theta_{yy}dy+\int R\varrho\Theta v_y(\varrho_0^\alpha)_{y}\Theta_ydy+\int R\varrho\Theta v_{y}\varrho_0^{\alpha}\Theta_{yy}dy\nonumber\\
&&-\int(\varrho_0^{\alpha})_y\Theta_y|\frac{v_y}J|^{q}Jdy
-\int\varrho_0^{\alpha}\Theta_{yy}|\frac{v_y}J|^{q}Jdy
:=\sum_{i=1}^8{\mathbb C}_i.
\end{eqnarray}
Due to Lemma \ref{2dlg-L2.1} and H\"{o}lder inequality, it is easy to get that
\begin{eqnarray*}
&|{\mathbb C}_1|&=\Big|-\int\varrho_{0y}\varrho_0^{\alpha}\Theta_t\Theta_ydy\Big|\nonumber\\
&&\leqslant C\overline\varrho^{-\alpha}\|(\varrho_0^{\alpha})_y\|_{L^\infty}\Big(\int\varrho_0^{\alpha+3}\Theta_t^2dy\Big)^{\frac1 2}\Big(\int\varrho_0^{\alpha+1}\Theta_y^2dy\Big)^{\frac1 2}\\
&&\leqslant \Big(\int\varrho_0^{\alpha+1}\Theta_y^2dy\Big)^{\frac{2q} {q-1}}+\eta\int\varrho_0^{\alpha+3}\Theta_t^2dy
+C(\eta)\overline\varrho^{-\frac{4q\alpha}{q+1}}\|(\varrho_0^\alpha)_y\|_{L^\infty}^{\frac{4q}{q+1}},\\
&|{\mathbb C}_2|&=\Big|-\int(p-1)|\frac{\Theta_y} J|^{p-2}\frac{\Theta_{yy}}{J}(\varrho_0^\alpha)_y\Theta_ydy\Big|\\
&&\leqslant C\overline\varrho^{-\frac{(p+2)\alpha+p}{4}}\Big(\frac{1}{\underline{J}}\Big)^{\frac{2p-1}{2}}\|(\varrho_0^{\alpha})_y\|_{L^\frac{4}{2-p}}\Big(\int\varrho_0^\alpha
|\frac{\Theta_y}{ J}|^{p-2}\frac{\Theta _{yy}^2}{J}dy\Big)^{\frac1 2}\Big(\int\varrho_0^{\alpha+1}\Theta_y^2dy\Big)^{\frac{p}{4}}\\
&&\leqslant \Big(\int\varrho_0^{\alpha+1}\Theta_y^2dy\Big)^{\frac{2q}{q-1}}+\epsilon\int\varrho_0^\alpha|\frac{\Theta _y} J|^{p-2}\frac {\Theta _{yy}^2}Jdy\\
&&+C(\epsilon)\left(\overline\varrho^{-\frac{(p+2)\alpha+p}{4}}\Big(\frac{1}{\underline{J}}\Big)^{\frac{2p-1}{2}}\|(\varrho_0^{\alpha})_y\|_{L^\frac{4}{2-p}}\right)^{\frac{8q}{4q+p-pq}},\\
&|{\mathbb C}_3|&=\Big|(p-1)\int|\frac{\Theta_y} J|^{p}J_y (\varrho_0^\alpha)_ydy\Big|\\
&&\leqslant C\overline\varrho^{\frac{-\alpha} {2}}\Big(\frac1 {\underline J}\Big)^p\|(\varrho_0^{\alpha})_y\|_{L^\infty}\Big(\int\varrho_0^{\alpha} J_y^2dy\Big)^{\frac1 2}\|\Theta_y\|_{L^\infty}^p\\
&&\leqslant \Big(\int\varrho_0^{\alpha} J_y^2dy+\int\varrho_0^{\alpha+1}\Theta_y^2dy\Big)^{\frac{2q} {q-1}}+\epsilon\int\varrho_0^\alpha|\frac{\Theta _y} J|^{p-2}\frac {\Theta _{yy}^2}Jdy\\
&&+C(\epsilon)\left(\overline\varrho^{-\frac{5p\alpha+2p+2\alpha}{2(p+2)}}\overline{ J}^{\frac{p-1}{p+2}}\|(\varrho_0^\alpha)_y\|_{L^\infty}\right)^{\frac{4q(p+2)}{6q-pq-p+2}},\\
&|{\mathbb C}_4|&=\Big|(p-1)\int\varrho_0^{\alpha}|\frac{\Theta_y} J|^{p-2}\frac {{\Theta_y}{J_y}} {J^2}\Theta_{yy}dy\Big|\\
&&\leqslant C\Big(\frac1 {\underline J}\Big)^{\frac {p+1} 2}\Big(\int\varrho_0^{\alpha} J_y^2dy\Big)^{\frac1 2}\Big(\int\varrho_0^\alpha|\frac{\Theta _y} J|^{p-2}\frac {\Theta _{yy}^2}Jdy\Big)^{\frac1 2}\|\Theta_y\|_{L^\infty}^{\frac p 2}\\
&&\leqslant \Big(\int\varrho_0^{\alpha} J_y^2dy+\int\varrho_0^{\alpha+1}\Theta_y^2dy\Big)^{\frac{2q}{q-1}}+\epsilon\int\varrho_0^\alpha|\frac{\Theta _y} J|^{p-2}\frac {\Theta _{yy}^2}Jdy\\
&&+C(\epsilon)\left(\overline\varrho^{-\frac {p(2\alpha+1)}{2(p+2)}}\Big(\frac1 {\underline J}\Big)^{\frac {p-1}{2}}{\overline J}^{\frac {p(p-1))}{2(p+2)}}\right)^\frac{2q(p+2)}{q+p+1-pq},\\
&|{\mathbb C}_5|&=\Big|\int R\varrho\Theta v_y(\varrho_0^\alpha)_{y}\Theta_ydy\Big|\\
&&\leqslant C\overline\varrho^{-\alpha}\Big(\frac1 {\underline J}\Big)\|(\varrho_0^{\alpha})_y\|_{L^\infty}\overline{J}\Big(\int\varrho_0^{\alpha+2}\Theta^2dy\Big)^{\frac1 2}\Big(\int\varrho_0^{\alpha+1}\Theta_y^2dy\Big)^{\frac1 2}\|v_y\|_{L^\infty}\\
&&\leqslant \Big(\int\varrho_0^{\alpha+2}\Theta^2dy+\int\varrho_0^{\alpha+1}\Theta_y^2dy	+\int\varrho_0^{\alpha+1}v_y^2dy\Big)^{\frac{2q}{q-1}}+\eta\int\varrho_0^\alpha|\frac{v_y}{J}|^{q-2}\frac {v_{yy}^2}Jdy \\
&&+C(\eta)\left(\overline\varrho^{-\frac {(q+4)\alpha+1}{q+2}}\Big(\frac1 {\underline J}\Big){\overline J}^\frac{2q+1}{q+2}\|(\varrho_0^{\alpha})_y\|_{L^\infty}\right)^{\frac {2q(q+2)}{q^2+3}},\\
&|{\mathbb C}_6|&=\Big|\int R\varrho\Theta v_{y}\varrho_0^{\alpha}\Theta_{yy}dy\Big|\\
&&\leqslant C\Big(\frac1 {\underline J}\Big)^{\frac{3-p} 2}\Big(\int\varrho_0^{\alpha+2}\Theta^2dy\Big)^{\frac1 2}\Big(\int\varrho_0^\alpha|\frac{\Theta _y} J|^{p-2}\frac {\Theta _{yy}^2}Jdy\Big)^{\frac1 2}\|v_y\|_{L^\infty}\|\Theta_y\|_{L^\infty}^{\frac{2-p} 2}\\
 &&\leqslant \Big(\int\varrho_0^{\alpha+1}\Theta_y^2dy+\int\varrho_0^{\alpha+1}v_y^2dy+\int\varrho_0^{\alpha+2}\Theta^2dy\Big)^{\frac{2q} {q-1}}\\
 && +\eta\int\varrho_0^\alpha|\frac{v_y}{J}|^{q-2}\frac {v_{yy}^2}Jdy+\epsilon\int\varrho_0^\alpha|\frac{\Theta _y} J|^{p-2}\frac {\Theta _{yy}^2}Jdy\\
 &&+C(\epsilon,\eta)\left(\overline\varrho^{-\left(\frac {2\alpha+1}{q+2}+\frac{(2-p)(2\alpha+1)}{2(p+2)}\right)}
 \Big(\frac1 {\underline J}\Big)^{\frac {q-1}{q+2}+\frac{(2-p)(q-1)}{2(p+2)}}\overline{J}^{\frac{3-p}{2}}\right)^{\frac{2q(q+2)(p+2)}{4pq^2+2pq+2p-16q+12}},\\
&|{\mathbb C}_7|&=\Big|-\int(\varrho_0^{\alpha})_y\Theta_y|\frac{v_y}J|^{q}Jdy\Big|\\
&&\leqslant C\Big(\frac1 {\underline J}\Big)^q\|(\varrho_0^{\alpha})_y\|_{L^\infty}\overline{J}\Big(\int\varrho_0^{\alpha+2}\Theta^2dy\Big)^{\frac1 2}\Big(\int\varrho_0^{\alpha+1}v_y^2dy\Big)^{\frac1 2}\|v_y\|_{L^\infty}^{q-1}\\
&& \leqslant\Big(\int\varrho_0^{\alpha+1}\Theta_y^2dy	+\int\varrho_0^{\alpha+1}v_y^2dy\Big)^{\frac{2q}{q-1}}+\eta\int\varrho_0^\alpha|\frac{v_y}{J}|^{q-2}\frac {v_{yy}^2}Jdy \\
&&+C(\eta)\left(\overline\varrho^{-\frac {(q-1)(2\alpha+1)}{q+2}}\Big(\frac1 {\underline J}\Big)^q{\overline J}^\frac{q-1}{q+2}\|(\varrho_0^{\alpha})_y\|_{L^\infty}\right)^{\frac {4q(q+2)}{3(q^2+3)}},\\
&|{\mathbb C}_8|&=\Big|-\int\varrho_0^{\alpha}\Theta_{yy}|\frac{v_y}J|^{q}Jdy\Big|\\
&&\leqslant C\Big(\frac1 {\underline J}\Big)^{\frac{1+q-p} 2}\Big(\int\varrho_0^{\alpha}\left|\frac{v_y}{J}\right|^qJdy\Big)^{\frac1 2}\|\Theta_y\|_{L^\infty}^{\frac{2-p}{2}}\|v_y\|_{L^\infty}^{\frac{q}{2}}\\
 &&\leqslant \Big(\int\varrho_0^{\alpha+1}\Theta_y^2dy+\int\varrho_0^{\alpha+1}v_y^2dy+\int\varrho_0^{\alpha}\left|\frac{v_y}{J}\right|^qJdy\Big)^{\frac{2q} {q-1}}
 +\eta\int\varrho_0^\alpha|\frac{v_y}{J}|^{q-2}\frac {v_{yy}^2}Jdy\\
 &&+\epsilon\int\varrho_0^\alpha|\frac{\Theta _y} J|^{p-2}\frac {\Theta _{yy}^2}Jdy+C(\epsilon,\eta)\left(\overline\varrho^{-\frac {(6-p)(2\alpha+1)}{4(p+2)}}
 \Big(\frac1 {\underline J}\Big)^{\frac{1+q-p}{2}}\overline{J}^{\frac{2(q-1)}{p+2}}\right)^{\frac{4q(p+2)(q+2)}{5pq^2+12pq-4q^2+4q-2p+8}}
\end{eqnarray*}
hold for any given $\epsilon\in(0,1)$ and $\eta\in(0,1).$ Thus,
\begin{eqnarray}\label{3dlg-E15}
&&\frac d {dt}\int\varrho_0^{\alpha+1}\Theta_y^2dy+\int\varrho_0^\alpha|\frac{\Theta_y} J|^{p-2}\frac {\Theta_{yy}^2}Jdy\nonumber\\
  &&\leqslant \left(\int\varrho_0^{\alpha+1}v_y^2dy+\int\varrho_0^\alpha J_y^2dy+\int\varrho_0^{\alpha+1}\Theta_y^2dy +\int\varrho_0^{\alpha+2}\Theta^2dy\right)^{\frac{2q} {q-1}}+\eta\int\varrho_0^{\alpha+3}\Theta_t^2dy \nonumber\\
&&+\eta\int\varrho_0^\alpha|\frac{v_y}{J}|^{q-2}\frac {v_{yy}^2}Jdy+C(\eta)\left(\overline\varrho^{-10\alpha} \Big(\frac1 {\underline J}\Big)^{4}\overline{J}^{24}\|(\varrho_0^\alpha)_y\|_{L^\infty}^{16}\|(\varrho_0^\alpha)_y\|_{L^p}^{16}\right)
 \end{eqnarray}
holds for any fixed $\eta\in (0,1).$ 

{\sl\textit{Step 5.}} Multiplying both sides of (\ref{1dlg-E14}) by $\varrho_0^{\alpha}v_t$ and integrating with respect to $y$, it runs that
\begin{equation}\label{3dlg-E15}
\begin{aligned}
&\frac1 q\frac d {dt}\int\varrho_0^\alpha|\frac{v_y}{J}|^qJdy+\int\varrho_0^{\alpha+1}v_t^2dy\\
&=-\int|\frac{v_y}{J}|^{q-2}\frac {v_y}J(\varrho_0^\alpha)_yv_tdy+\frac1 q\int \varrho_0^{\alpha}|\frac{v_y}{J}|^qJ_tdy-\int\varrho_0^\alpha|\frac{v_y}{J}|^{q-2}\frac {1}{J^2}J_t{v_y^2}dy\\
&-\int R(\varrho\Theta)_y\varrho_0^\alpha v_tdy=\sum_{i=1}^4{\mathbb F}_i.
\end{aligned}
    \end{equation}
Obviously,
\begin{eqnarray*}
&&|{\mathbb F}_1|=\Big|-\int|\frac{v_y}{J}|^{q-2}\frac {v_y}J(\varrho_0^\alpha)_yv_tdy\Big|\\
&&\leqslant C\overline\varrho^{-\frac{(3q-2)\alpha+q}{2q}}\Big(\frac1 {\underline J}\Big)^{q-1}\left(\int\varrho_0^\alpha|\frac{v_y}{J}|^qJdy\right)^{\frac{q-1}{q}}\Big(\int\varrho_0^{\alpha+1}v_t^2dy\Big)^{\frac1 2}\|(\varrho_0^{\alpha})_y\|_{L^{\frac{2q}{2-q}}}\\
&&\leqslant \epsilon\int\varrho_0^{\alpha+1}v_t^2dy+\eta\int\varrho_0^\alpha|\frac{v_y}{J}|^qJdy+
C(\epsilon,\eta)\left(\overline\varrho^{-\frac{(3q-2)\alpha+q}{2q}}
\Big(\frac{1}{\underline J}\Big)^{q-1}\|(\varrho_0^{\alpha})_y\|_{L^{\frac{2q}{2-q}}}\right)^{\frac{2q}{2-q}},\\
&&|{\mathbb F}_2|=\Big|\frac1 q\int \varrho_0^{\alpha}|\frac{v_y}{J}|^qJ_tdy\Big|\\
&&\leqslant C\left(\int\varrho_0^\alpha|\frac{v_y}{J}|^qJdy\right)\|v_y\|_{L^\infty}\\
&&\leqslant C\overline{\varrho}^{-\frac{2\alpha+1}{q+2}}\overline{J}^{\frac{q-1}{q+2}}\left(\int\varrho_0^\alpha|\frac{v_y}{J}|^qJdy\right)
\left(\int\varrho_0^{\alpha+1}v_y^2dy\right)^{\frac{1}{q+2}}\left(\int\varrho_0^\alpha|\frac{v_y}{J}|^{q-2}\frac{v_{yy}^2}{J}dy\right)^{\frac{1}{q+2}}\\
&&\leqslant \left(\int\varrho_0^{\alpha+1}v_y^2dy+\int\varrho_0^\alpha|\frac{v_y}{J}|^qJdy\right)^{\frac{2q}{q-1}}
+\eta\int\varrho_0^\alpha|\frac{v_y}{J}|^{q-2}\frac {v_{yy}^2}{J}dy+C(\eta)\left(\overline{\varrho}^{-\frac{2\alpha+1}{q+2}}\overline{J}^{\frac{q-1}{q+2}}\right)^{\frac{2q(q+2)}{q^2+3}},\\
&&|{\mathbb F}_3|=|-\int\varrho_0^\alpha|\frac{v_y}{J}|^{q-2}\frac {1}{J^2}J_t{v_y^2}dy|\\
&&\leqslant C\left(\int\varrho_0^\alpha|\frac{v_y}{J}|^qJdy\right)\|v_y\|_{L^\infty}\\
&&\leqslant \left(\int\varrho_0^{\alpha+1}v_y^2dy+\int\varrho_0^\alpha|\frac{v_y}{J}|^qJdy\right)^{\frac{2q}{q-1}}
+\eta\int\varrho_0^\alpha|\frac{v_y}{J}|^{q-2}\frac {v_{yy}^2}{J}dy+C(\eta)\left(\overline{\varrho}^{-\frac{2\alpha+1}{q+2}}\overline{J}^{\frac{q-1}{q+2}}\right)^{\frac{2q(q+2)}{q^2+3}},\\
&&|{\mathbb F}_4|=|-\int R(\varrho\Theta)_y\varrho_0^\alpha v_tdy|\\
&&\leqslant 
R|\int \frac{1}{J}\varrho_{0y}\varrho_0^{\alpha}\Theta v_tdy|
+R|\int \frac{1}{J^2}J_y\varrho_0^{\alpha+1}\Theta v_tdy|+R|\int \frac{1}{J}\varrho_0J_y\varrho_0^{\alpha}\Theta_y v_tdy|\\
&&:=\sum\limits_{i=1}^3|J_i|,
\end{eqnarray*}
where
\begin{eqnarray*}
&|J_1|&=\Big|\int \frac{1}{J}\varrho_{0y}\varrho_0^{\alpha}\Theta v_tdy\Big|\\
&&\leqslant \frac{1}{\underline{J}}\overline{J}\overline{\varrho}^{-(2\alpha+1)}\|\left(\varrho_0^\alpha\right)_y\|_{L^\infty}
\left(\int\rho_0^{\alpha+1}v_t^2dy\right)^{\frac{1}{2}}\left(\int\rho_0^{\alpha+2}\Theta^2dy\right)^{\frac{1}{2}}\\
&&\leqslant\left(\int\rho_0^{\alpha+2}\Theta^2dy\right)^{\frac{2q}{q-1}}+\epsilon\int\rho_0^{\alpha+1}v_t^2dy
+C(\epsilon)\left(\frac{1}{\underline{J}}\overline{J}\overline{\varrho}^{-(2\alpha+1)}\|\left(\varrho_0^\alpha\right)_y\|_{L^\infty}\right)^{\frac{4q}{q+1}},\\
&|J_2|&=\Big|-\int \frac{1}{J^2}J_y\varrho_0^{\alpha+1}\Theta v_tdy\Big|\\
&&\leqslant \left(\frac{1}{\underline{J}}\right)^2\overline{J}\overline{\varrho}^{-\frac{2\alpha+1}{4}}\left(\int\varrho_0^\alpha J_y^2dy\right)^\frac{1}{2}
\left(\int\rho_0^{\alpha+1}v_t^2dy\right)^{\frac{1}{2}}\left(\int\rho_0^{\alpha+2}\Theta^2dy\right)^{\frac{1}{4}}\left(\int\rho_0^{\alpha+1}\Theta_y^2dy\right)^{\frac{1}{4}}\\
&&\leqslant\left(\int\rho_0^{\alpha+2}\Theta^2dy+\int\rho_0^{\alpha+1}\Theta_y^2dy+\int\varrho_0^\alpha J_y^2dy\right)^{\frac{2q}{q-1}}+\epsilon\int\rho_0^{\alpha+1}v_t^2dy\\
&&+C(\epsilon)\left(\left(\frac{1}{\underline{J}}\right)^2\overline{J}\overline{\varrho}^{-\frac{2\alpha+1}{4}}\right)^{\frac{2q}{q+1}},\\
&|J_3|&=\Big|\int \frac{1}{J}\varrho_0J_y\varrho_0^{\alpha}\Theta_y v_tdy\Big|\\
&&\leqslant \left(\frac{1}{\underline{J}}\right)\overline{\varrho}\left(\int\rho_0^{\alpha+1}v_t^2dy\right)^{\frac{1}{2}}
\left(\int\rho_0^{\alpha+1}J_y^2dy\right)^{\frac{1}{2}}\|\Theta_y\|_{L^\infty}\\
&&\leqslant\left(\int\rho_0^{\alpha+1}\Theta_y^2dy+\int\varrho_0^\alpha J_y^2dy\right)^{\frac{2q}{q-1}}+\epsilon\int\rho_0^{\alpha+1}v_t^2dy
+\eta\int\varrho_0^\alpha|\frac{\Theta _y} J|^{p-2}\frac {\Theta _{yy}^2}Jdy\\
&&+C(\epsilon,\eta)\left(\left(\frac{1}{\underline{J}}\right)\overline{J}^{\frac{p}{2(p+2)}}
\overline{\varrho}^{-\frac{2\alpha-2p-3}{2(p+2)}}\right)^{\frac{4q(p+2)}{2pq+q+2p+5}}.
    \end{eqnarray*}
hold for any given $\epsilon\in(0,1)$ and $\eta\in(0,1).$ So,
\begin{eqnarray}\label{3dlg-E16}
&&\frac d {dt}\int\varrho_0^\alpha|\frac{v_y}{J}|^qJdy+\int\varrho_0^{\alpha+1}v_t^2dy\nonumber\\
&&\leqslant \left(\int\varrho_0^{\alpha+1}v_y^2dy+\int\rho_0^{\alpha+2}\Theta^2dy+\int\rho_0^{\alpha+1}\Theta_y^2dy+\int\varrho_0^\alpha J_y^2dy+\int\varrho_0^\alpha|\frac{v_y}{J}|^qJdy\right)^{\frac{2q}{q-1}}\nonumber\\
&&+\eta\int\varrho_0^\alpha|\frac{v_y}{J}|^qJdy+\eta\int\varrho_0^\alpha|\frac{v_y}{J}|^{q-2}\frac {v_{yy}^2}Jdy\nonumber\\
&&+C(\eta)\left(\|(\varrho_0^\alpha)_y\|_{L^q}\|(\varrho_0^\alpha)_y\|_{L^\infty}\right)^{\frac{4q}{2-q}}\overline{J}^{4}
\left(\frac{1}{\underline{J}}\right)^{\max\{\frac{2q}{2-q},4q\}}\overline{\varrho}^{-\min\{\frac{(3q-2)\alpha+q}{2-q},8\alpha\}}
\end{eqnarray}
hold for any given $\eta\in(0,1).$

{\sl\textit{Step 6.}} Multiplying equation (\ref{1dlg-E15}) by $\varrho_0^{\alpha+2}\Theta_t$ and integrating the resultant over $\mathbb R$, one obtains that
\begin{eqnarray}\label{3dlg-E17}
&&\frac1 p\frac d {dt}\int\varrho_0^{\alpha+2}|\frac{\Theta_y} J|^pJdy+\int\varrho_0^{\alpha+3}\Theta_t^2dy\nonumber\\
&&=-\int|\frac{\Theta_y} J|^{p-2}\frac {\Theta_y}J(\varrho_0^{\alpha+2})_y\Theta_tdy+\frac1 p\int \varrho_0^{\alpha+2}|\frac{\Theta_y} J|^pJ_tdy-\int\varrho_0^{\alpha+2}|\frac{\Theta_y} J|^{p-2}\frac {1}{J^2}J_t{\Theta_y^2}dy\nonumber\\
&&-\int R\varrho\varrho_0^{\alpha+2}v_y\Theta\Theta_tdy+\int|\frac{v_y}J|^{q}J\varrho_0^{\alpha+2}\Theta_tdy
:=\sum_{i=1}^5{\mathbb H}_i.
\end{eqnarray}
Each term on the right hand of (\ref{3dlg-E17}) can be estimated as follows
\begin{eqnarray*}
&|{\mathbb H}_1|&=\Big|-\int|\frac{\Theta_y} J|^{p-2}\frac {\Theta_y}J(\varrho_0^{\alpha+2})_y\Theta_tdy\Big|\\
&&\leqslant C\overline\varrho^{-\frac{(3p-2)(\alpha+1)}{2p}}\|(\varrho_0^{\alpha})_y\|_{L^\infty}\Big(\int\varrho_0^{\alpha+1}|\frac{\Theta_y} J|^pJdy\Big)^{\frac{1}{2}}\Big(\int\varrho_0^{\alpha+3}\Theta_t^2dy\Big)^{\frac12}\\
&&\leqslant\Big(\int\varrho_0^{\alpha+1}|\frac{\Theta_y} J|^pJdy\Big)^{\frac{2q}{q-1}} +\epsilon\int\varrho_0^{\alpha+3}\Theta_t^2dy
+C(\epsilon)\left(\overline\varrho^{-\frac{(3p-2)(\alpha+1)}{2p}}\|(\varrho_0^{\alpha})_y\|_{L^\infty}\right)^{\frac{4q}{q+1}},\\
&|{\mathbb H}_2|&=\Big|\frac1 p\int \varrho_0^{\alpha+2}|\frac{\Theta_y} J|^pJ_tdy\Big|\\
&&\leqslant C\Big(\frac1 {\underline J}\Big)\int\varrho_0^{\alpha+2}|\frac{\Theta_y} J|^pJdy\|v_y\|_{L^\infty}\\
&&\leqslant C\overline{\varrho}^{-\frac{2\alpha+1}{q+2}}\overline{J}^{\frac{q-1}{q+2}}\left(\frac{1}{\underline{J}}\right)\left(\int\varrho_0^{\alpha+2}|\frac{\Theta_y} J|^pJdy\right)\left(\int\varrho_0^{\alpha+1}v_y^2dy\right)^{\frac{1}{q+2}}
\left(\int\varrho_0^\alpha|\frac{v_y}{J}|^{q-2}\frac {v_{yy}^2}Jdy\right)^{\frac{1}{q+2}}\\
&&\leqslant\left(\int\varrho_0^{\alpha+2}|\frac{\Theta_y} J|^pJdy+\int\varrho_0^{\alpha+1}v_y^2dy\right)^{\frac{2q}{q-1}}
+\eta\int\varrho_0^\alpha|\frac{v_y}{J}|^{q-2}\frac {v_{yy}^2}Jdy\\
&&+C(\eta)\left(C\overline{\varrho}^{-\frac{2\alpha+1}{q+2}}\overline{J}^{\frac{q-1}{q+2}}\left(\frac{1}{\underline{J}}\right)\right)^{\frac{2q(q+2)}{q^2+3}},\\
&|{\mathbb H}_3|&=\Big|-\int\varrho_0^{\alpha+2}|\frac{\Theta_y} J|^{p-2}\frac {1}{J^2}J_t{\Theta_y^2}dy\Big|\\
&&\leqslant C\Big(\frac1 {\underline J}\Big)\int\varrho_0^{\alpha+2}|\frac{\Theta_y} J|^pJdy\|v_y\|_{L^\infty}\\
&&\leqslant\left(\int\varrho_0^{\alpha+2}|\frac{\Theta_y} J|^pJdy+\int\varrho_0^{\alpha+1}v_y^2dy\right)^{\frac{2q}{q-1}}
+\eta\int\varrho_0^\alpha|\frac{v_y}{J}|^{q-2}\frac {v_{yy}^2}Jdy\\
&&+C(\eta)\left(\overline{\varrho}^{-\frac{2\alpha+1}{q+2}}\overline{J}^{\frac{q-1}{q+2}}\left(\frac{1}{\underline{J}}\right)\right)^{\frac{2q(q+2)}{q^2+3}},\\
&|{\mathbb H}_4|&=\Big|-\int R\varrho\varrho_0^{\alpha+2}v_y\Theta\Theta_tdy\Big|\\
&&\leqslant C\overline\varrho^{\frac1 2}\Big(\frac1 {\underline J}\Big)\overline{J}\Big(\int\varrho_0^{\alpha+2}\Theta^2dy\Big)^{\frac1 2}\Big(\int\varrho_0^{\alpha+3}\Theta_t^2dy\Big)^{\frac1 2}\|v_y\|_{L^\infty}\\
&&\leqslant\left(\int\varrho_0^{\alpha+2}\Theta^2dy+\int\varrho_0^{\alpha+1}v_y^2dy\right)^{\frac{2q}{q-1}}
+\eta\int\varrho_0^\alpha|\frac{v_y}{J}|^{q-2}\frac {v_{yy}^2}Jdy+\epsilon\int\varrho_0^{\alpha+3}\Theta_t^2dy\\
&&+C(\epsilon,\eta)\left(C\overline\varrho^{-\frac{4\alpha-q}{q+2}}\Big(\frac1 {\underline J}\Big)\overline{J}^{\frac{2q+1}{q+2}}\right)^{\frac{4q(q+2)}{q^2-2q+3}},\\
&|{\mathbb H}_5|&=\Big|\int|\frac{v_y}J|^{q}J\varrho_0^{\alpha+2}\Theta_tdy\Big|\\
&&\leqslant C\overline{J}^{2-q}\|v_y\|_{L^\infty}^{q-1}\Big(\int\varrho_0^{\alpha+1}v_y^2dy\Big)^{\frac1 2}\Big(\int\varrho_0^{\alpha+3}\Theta_t^2dy\Big)^{\frac1 2}\\
&&\leqslant \left( \int\varrho_0^{\alpha+1}v_y^2dy\right)^{\frac{2q}{q-1}}+\eta\int\varrho_0^\alpha|\frac{v_y}{J}|^{q-2}\frac {v_{yy}^2}Jdy+\epsilon\int\varrho_0^{\alpha+3}\Theta_t^2dy\\
&&+C(\epsilon,\eta)\left(\Big(\frac1 {\underline J}\Big)^{\frac{5-q}{q+2}}\overline{\varrho}^{-\frac{(q-1)(2\alpha+1)}{q+2}}\right)^{\frac{4(q+2)}{11-5q}}\\
    \end{eqnarray*}
hold for any given $\epsilon\in(0,1)$ and $\eta\in(0,1).$ Thus, one obtains that
\begin{eqnarray}\label{3dlg-E18}
&&\frac d {dt}\int\varrho_0^{\alpha+2}|\frac{\Theta_y} J|^pJdy+\int\varrho_0^{\alpha+3}\Theta_t^2dy\nonumber\\
&&\leqslant \Big(\int\varrho_0^{\alpha+2}\Theta^2dy+\int\varrho_0^{\alpha+2}|\frac{\Theta_y} J|^pJdy +\int\varrho_0^{\alpha+1}v_y^2dy\Big)^{\frac{2q}{q-1}}
+\eta\int\varrho_0^\alpha|\frac{v_y}{J}|^{q-2}\frac {v_{yy}^2}Jdy\nonumber\\
&&+C(\eta)\overline{\varrho}^{-4\alpha}\overline{J}^{4}\left(\frac{1}{\underline{J}}\right)^{16}\|(\varrho_0^{\alpha})_y\|_{L^\infty}^{4}
\end{eqnarray}
for any fixed $\eta\in (0,1).$

{\sl\textit{Step 7.}} Differentiating (\ref{1dlg-E13}) with respect to $y,$ multiplying it by $\varrho_0^{\alpha}J_y$ and integrating the resultant over $\mathbb R$, one deduces that
\begin{eqnarray}\label{3dlg-E19}
&&\frac1 2\frac d {dt}\int\varrho_0^{\alpha} J_y^2dy=\int\varrho_0^{\alpha}{J_y}v_{yy}dy\nonumber\\
&&\leqslant C{\overline J}^{\frac {3-q} {2}}\|v_y\|_{L^\infty}^\frac{2-q}{2}\Big(\int\varrho_0^{\alpha} J_y^2dy\Big)^{\frac1 2}\Big(\int\varrho_0^\alpha|\frac{v_y}{J}|^{q-2}\frac {v_{yy}^2}Jdy\Big)^{\frac1 2}\nonumber\\
&&\leqslant C\overline{\varrho}^{-\frac{(3-q)(2\alpha+1)}{2(q+2)}}{\overline J}^{\frac{5-q}{2(q+2)}}\Big(\int\varrho_0^{\alpha} J_y^2dy\Big)^{\frac1 2}\left(\int\varrho_0^\alpha|\frac{v_y}{J}|^{q-2}\frac {v_{yy}^2}Jdy\right)^{\frac{2}{q+2}}\Big(\int\varrho_0^{\alpha+1}v_y^2dy\Big)^{\frac{2-q}{2(q+2)}}\nonumber\\
&&\leqslant \Big(\int\varrho_0^{\alpha}J_y^2dy+\int\varrho_0^{\alpha+1}v_y^2dy\Big)^{\frac{2q}{q-1}}+\eta\int\varrho_0^\alpha|\frac{v_y}{J}|^{q-2}\frac {v_{yy}^2}Jdy\nonumber\\
&&+C(\eta)\left(\overline{\varrho}^{-\frac{(3-q)(2\alpha+1)}{2(q+2)}}{\overline J}^{\frac{5-q}{2(q+2)}}\right)^{\frac{q(q+2)}{q^2-q+1}}
\end{eqnarray}
holds for any given $\eta\in (0,1).$

Now,  one adds up the estimates (\ref{3dlg-E9})-(\ref{3dlg-E19}) to arrive at
\begin{eqnarray}\label{3dlg-E20}
&&\frac d {dt}\int\varrho_0^{\alpha}\Big({\varrho_0}v^2+{\varrho_0}\Theta+\varrho_0^2\Theta^2+{\varrho_0}v_y^2
+{\varrho_0}\Theta_y^2+|\frac{v_y}{J}|^qJ+\varrho_0^2|\frac{\Theta_y} J|^pJ+ J_y^2\Big)dy\nonumber\\
&&+\int\varrho_0^\alpha\Big(|\frac{v_y}{J}|^qJ+{\varrho_0}|\frac{\Theta_y} J|^pJ+|\frac{v_y}{J}|^{q-2}\frac{ v_{yy}^2}J+|\frac{\Theta_y} J|^{p-2}\frac {\Theta_{yy}^2}J+\varrho_0^3\Theta_t^2+{\varrho_0}v_t^2\Big)dy\nonumber\\
&&\leqslant\bigg(\int\varrho_0^{\alpha}\Big({\varrho_0}v^2+{\varrho_0}\Theta+\varrho_0^2\Theta^2+{\varrho_0}v_y^2
+{\varrho_0}\Theta_y^2+|\frac{v_y}{J}|^qJ+\varrho_0^2|\frac{\Theta_y} J|^pJ+ J_y^2\Big)dy\bigg)^{\frac{2q}{q-1}}\nonumber\\
&&+C{\overline J}^{12}\Big(\frac1{\underline J}\Big)^{\frac{12q}{2-q}}{\overline \varrho}^{-\min\{\frac{(3q-2)\alpha+q}{2-q},\frac{10\alpha}{q-1}\}}
\left(\|(\varrho_0^\alpha)_y\|_{L^p}\|(\varrho_0^\alpha)_y\|_{L^\infty}\right)^{\frac{16q}{2-q}}+C\overline{\varrho}^{-\frac{2\alpha+1}{2(q+1)}}.
\end{eqnarray}
Set
\begin{equation*}
G_0=C{\overline J}^{12}\Big(\frac1{\underline J}\Big)^{\frac{12q}{2-q}}{\overline \varrho}^{-\min\{\frac{(3q-2)\alpha+q}{2-q},\frac{10\alpha}{q-1}\}}
\left(\|(\varrho_0^\alpha)_y\|_{L^p}\|(\varrho_0^\alpha)_y\|_{L^\infty}\right)^{\frac{16q}{2-q}}+C\overline{\varrho}^{-\frac{2\alpha+1}{2(q+1)}}
 \end{equation*}
and
\begin{eqnarray*}
&H_0(t)&=\|\varrho_0^{\frac {\alpha+2} 2}v_0\|_{L^2}^2+\|\varrho_0^{\alpha+1}\Theta_0\|_{L^1}+\|\varrho_0^{\frac {\alpha+2} 2}\Theta_0\|_{L^2}^2+\|\varrho_0^{\frac {\alpha+1} 2} v_0^{\prime}\|_{L^2}^2+\|\varrho_0^{\frac {\alpha+1} 2}\Theta_0^{\prime}\|_{L^2}^2\\
&&+\|\varrho_0^{\frac \alpha q}|\frac { v_0^{\prime}} {J_0}|{J_0}^{\frac 1 q}\|_{L^q}^q+\|\varrho_0^{\frac {\alpha+2} p}|\frac {\Theta_0^{\prime}} {J_0}|{J_0}^{\frac 1 p}\|_{L^p}^p+\|\varrho_0^{\frac \alpha 2} J_0^{\prime}\|_{L^2}^2+G_0t.\notag
 \end{eqnarray*}
According to Lemma \ref{2dlg-L2.2}, one gets that
\begin{equation}\label{3dlg-E21}
\begin{aligned}
&\int\varrho_0^{\alpha}\Big({\varrho_0}v^2+{\varrho_0}\Theta+\varrho_0^2\Theta^2+{\varrho_0}v_y^2
+{\varrho_0}\Theta_y^2+|\frac{v_y}{J}|^qJ+\varrho_0^2|\frac{\Theta_y} J|^pJ+ J_y^2\Big)dy\\
&\leqslant H_0(t)\bigg(1-\frac{q+1}{q-1}H_0^{\frac{q+1}{q-1}}(t)t\bigg)^{-\frac{q-1}{q+1}}
\end{aligned}
    \end{equation}
holds for all $t\in[0,\widetilde{t_0}]$, where  $\widetilde{t_0} < T$ and
\begin{equation}\label{2dlg-E22}
\frac{q+1}{q-1}H_0^{\frac{q+1}{q-1}}(\widetilde{t_0})\widetilde{t_0}<1.
    \end{equation}
Moreover,
\begin{eqnarray}\label{3dlg-E23}
&&\bigg(\int\varrho_0^{\alpha}\Big({\varrho_0}v^2+{\varrho_0}\Theta+\varrho_0^2\Theta^2+{\varrho_0}v_y^2
+{\varrho_0}\Theta_y^2+|\frac{v_y}{J}|^qJ+\varrho_0^2|\frac{\Theta_y} J|^pJ+ J_y^2\Big)dy\bigg)(t)\nonumber\\
&&+\int_0^t\int\varrho_0^\alpha\Big(|\frac{v_y}{J}|^qJ+{\varrho_0}|\frac{\Theta_y} J|^pJ+|\frac{v_y}{J}|^{q-2}\frac { v_{yy}^2}J+|\frac{\Theta_y} J|^{p-2}\frac {\Theta_{yy}^2}J+\varrho_0^3\Theta_t^2+{\varrho_0}v_t^2\Big)dyds\nonumber\\
&&\leqslant H_0(t)+\int_0^tH_0^{\frac{2q} {q-1}}(s)\bigg(1-\frac{q+1}{q-1}H_0^{\frac{q+1}{q-1}}(s)s\bigg)^{-\frac{2q}{q+1}}ds
\end{eqnarray}
holds on $[0,\widetilde{t_0}]$.
\end{proof}

\begin{proposition}\label{3dlg-P3.3}
Let $p,q\in(1,2)$ and $\alpha<\min\{-\frac{q}{2(q-1)},-\frac{4-p}{2-p}\}.$ Suppose that the quaternion $(J,\varrho, v,\Theta)$ is a strong solution to the problem (\ref{1dlg-E16})-(\ref{1dlg-E17}) on ${\mathbb R}\times [0, T].$  Then, there exists a constant $\widetilde{t_1}>0$ with $\widetilde{t_1}<T$, which is independent of $\underline \varrho$, such that
\begin{equation}\label{3dlg-E24}
{\frac 3 4}\underline J\leqslant J\leqslant{\frac 5 4}\overline J\ on\  \mathbb R \times [0,\widetilde{t_1}].
    \end{equation}
Moreover,
\begin{equation}\label{3dlg-E25}
\sup\limits_{y\in\mathbb R}J(y,t)\leqslant2M_0(t)\exp\bigg(\overline\varrho^{-\frac{\alpha}{q}}\frac{q-1}{q}t\bigg),
    \end{equation}
\begin{equation}\label{3dlg-E26}
\inf\limits_{y\in\mathbb R}J(y,t)\geqslant(\inf\limits_{y\in\mathbb R}J_0)\bigg(1+C(\inf\limits_{y\in\mathbb R}J_0)\Big[q(M_0(t)-\sup\limits_{y\in\mathbb R}J_0)\Big]^{\frac1 q}\Big(\int_0^tF_0(s)ds\Big)^{\frac{q-1} q}\bigg)^{-1}
    \end{equation}
on $[0,\widetilde{t_1}]$, where 
\begin{eqnarray*}
&&M_0(t)=\frac{C}{q}\bigg(q\sup\limits_{y\in\mathbb R}J_0+H_0(t)+\int_0^tH_0^{\frac{2q}{q-1}}(s)\Big(1-\frac{q+1}{q-1}H_0^{\frac{q+1}{q-1}}(s)s\Big)^{-\frac{2q}{q-1}}ds\bigg),\\
&&H_0(t)=\Big(\|\varrho_0^{\frac {\alpha+2} 2}v_0\|_{L^2}^2+\|\varrho_0^{\alpha+1}\Theta_0\|_{L^1}+\|\varrho_0^{\frac {\alpha+2} 2}\Theta_0\|_{L^2}^2+\|\varrho_0^{\frac {\alpha+1} 2} v_0^{\prime}\|_{L^2}^2+\|\varrho_0^{\frac {\alpha+1} 2}\Theta_0^{\prime}\|_{L^2}^2\\
&&~~~~~~~~~~+\|\varrho_0^{\frac \alpha q}|\frac { v_0^{\prime}} {J_0}|{J_0}^{\frac 1 q}\|_{L^q}^q+\|\varrho_0^{\frac {\alpha+2} p}|\frac {\Theta_0^{\prime}} {J_0}|{J_0}^{\frac 1 p}\|_{L^p}^p+\|\varrho_0^{\frac \alpha 2} J_0^{\prime}\|_{L^2}^2\Big)
+G_0t,\\
&&G_0=C{\overline J}^{12}\Big(\frac1{\underline J}\Big)^{\frac{12q}{2-q}}{\overline \varrho}^{-\min\{\frac{(3q-2)\alpha+q}{2-q},\frac{10\alpha}{q-1}\}}
\left(\|(\varrho_0^\alpha)_y\|_{L^p}\|(\varrho_0^\alpha)_y\|_{L^\infty}\right)^{\frac{16q}{2-q}}+C\overline{\varrho}^{-\frac{2\alpha+1}{2(q+1)}},\\
&&F_0(t)=\overline\varrho^{-\frac{\alpha}{q-1}}M_0^{-\frac{q+1}{q-1}}(t)\exp\bigg(-\frac{q+1}{q}\overline\varrho^{-\frac{\alpha}{q}}t\bigg)
\end{eqnarray*}
on $[0,\widetilde{t_1}].$
\end{proposition}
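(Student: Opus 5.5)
The plan is to work directly with the Lagrangian relation $J_t=v_y$ and combine it with the uniform bounds of Proposition \ref{3dlg-P3.2}.

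\emph{Two-sided bound (\ref{3dlg-E24}).} Write $J(y,t)-J_0(y)=\int_0^t v_y\,ds$. We estimate $\|v_y\|_{L^\infty}$ pointwise in time by Lemma \ref{2dlg-L2.1}, in the form already used in Proposition \ref{3dlg-P3.1} and in the terms $\mathbb F_2,\mathbb H_2$. This gives
\[
\|v_y\|_{L^\infty}\leqslant C\overline\varrho^{-\frac{2\alpha+1}{q+2}}\overline J^{\frac{q-1}{q+2}}\Big(\int\varrho_0^{\alpha+1}v_y^2dy\Big)^{\frac1{q+2}}\Big(\int\varrho_0^\alpha|\tfrac{v_y}{J}|^{q-2}\tfrac{v_{yy}^2}{J}dy\Big)^{\frac1{q+2}},
\]
where $\varrho_0\le\overline\varrho$ and $\alpha<0$ are used to insert the weights. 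Integrating in time and applying H\"older, both factors are controlled by (\ref{3dlg-E4})--(\ref{3dlg-E5}). The resulting bound $\|J-J_0\|_{L^\infty}\le C(\cdot)\,t^{\frac{q+1}{q+2}}$ has a constant independent of $\underline\varrho$, since Proposition \ref{3dlg-P3.2} never uses it. Choosing $\widetilde{t_1}\le\widetilde{t_0}$ so small that this is $\le\frac14\underline J$ gives (\ref{3dlg-E24}).

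\emph{Upper bound (\ref{3dlg-E25}).} We take the ODE route. Using $|J_t|=|v_y|=J\,|v_y/J|$ and Young's inequality, we get
\[
\frac{d}{dt}J\le \frac{q-1}{q}\overline\varrho^{-\frac{\alpha}{q}}J+\frac1q\varrho_0^{\alpha}|\tfrac{v_y}{J}|^qJ\cdot(\text{weight factor}).
\]
Here we split $|v_y/J| J=(\varrho_0^{\alpha/q}|v_y/J|J^{1/q})(\varrho_0^{-\alpha/q}J^{(q-1)/q})$ and bound $\varrho_0^{-\alpha/q}\le\overline\varrho^{-\alpha/q}$. The remaining term has $\sup_y$ rather than an integral, so we first pass to a pointwise bound: write $\varrho_0^\alpha|v_y/J|^qJ$ at a point as the integral of its $y$-derivative. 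That derivative is controlled by $\varrho_0^\alpha|v_y/J|^{q-2}v_{yy}^2/J$, $\varrho_0^\alpha|v_y/J|^qJ$, $(\varrho_0^\alpha)_y$ and $J_y$ terms, all bounded in $L^1_t$ by (\ref{3dlg-E5}). Gronwall then yields $J\le (q\sup J_0+H_0+\int_0^t\cdots)\frac Cq e^{\frac{q-1}q\overline\varrho^{-\alpha/q}t}$, which is exactly (\ref{3dlg-E25}) with $M_0$ as defined. The factor 2 absorbs constants.

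\emph{Lower bound (\ref{3dlg-E26}).} We consider $w=1/J$, which satisfies $w_t=-w^2 v_y$. Integrating gives $\frac1J\le\frac1{J_0}+\int_0^t\frac{|v_y|}{J^2}ds$. We write $|v_y|/J^2=|v_y/J|\cdot J^{-1}$ and apply H\"older in time with exponents $q$ and $q/(q-1)$. The $L^q_t$ factor of $\varrho_0^{\alpha/q}|v_y/J|J^{1/q}$ is bounded via the quantity $q(M_0-\sup J_0)$ obtained in the previous step, namely the integrated dissipation. The conjugate factor involves $J^{-(q+1)/(q-1)}$ weights, which we control by the upper bound (\ref{3dlg-E25}); this produces $F_0$. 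Rearranging gives $\inf J\ge \inf J_0(1+C\inf J_0[\cdots]^{1/q}(\int F_0)^{(q-1)/q})^{-1}$. \textbf{The main obstacle} is the pointwise-in-$y$ control of $|v_y/J|^q$ in the last two steps. Only weighted integral bounds are available, and the weights $\varrho_0^\alpha$ blow up at infinity. I would first try the fundamental-theorem-of-calculus argument in $y$ combined with Lemma \ref{2dlg-L2.1}, tracking the $\overline\varrho$ powers so that they match $F_0$. If that fails, I would fall back on a cruder $\|v_y\|_{L^\infty}^q$ estimate, enlarging constants while keeping them independent of $\underline\varrho$.
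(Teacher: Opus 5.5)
\textbf{Step 1 is fine.} Your proof of (\ref{3dlg-E24}) is essentially the paper's. You interpolate $\|v_y\|_{L^\infty}$ via Lemma \ref{2dlg-L2.1} against $\int\varrho_0^{\alpha+1}v_y^2dy$ instead of $\int\varrho_0^{\alpha}|\frac{v_y}{J}|^qJdy$, which only changes the power of $t$. Both versions need the usual continuity argument, since $\overline J$ is used as a bound for the current $J$.

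\textbf{The gap is in (\ref{3dlg-E26}).} After H\"older in time you must bound $\int_0^t\varrho_0^{-\frac{\alpha}{q-1}}J^{-\frac{q+1}{q-1}}ds$ from above, and you propose to do this with the upper bound (\ref{3dlg-E25}). That goes the wrong way. The exponent $-\frac{q+1}{q-1}$ is negative, so $J\leqslant 2M_0e^{ct}$ only gives $J^{-\frac{q+1}{q-1}}\geqslant(2M_0e^{ct})^{-\frac{q+1}{q-1}}$. Bounding a negative power of $J$ from above requires a lower bound on $J$, which is exactly what you are trying to prove, so $F_0$ cannot be produced this way.

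The paper's proof makes the same move: it writes $J^{-2}\leqslant(\sup_yJ)^{-2}\leqslant CM_0^{-2}e^{-ct}$, and both inequalities are reversed. Mirroring it therefore does not repair the step.

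The correct form of your $w=1/J$ computation is a Riccati inequality. With $m(t)=\sup_{s\leqslant t}\|J^{-1}(\cdot,s)\|_{L^\infty}$ one gets $m(t)\leqslant m(0)+\int_0^t\|v_y\|_{L^\infty}m^2ds$. This integrates to $\inf_yJ(\cdot,t)\geqslant\inf_yJ_0-\int_0^t\|v_y\|_{L^\infty}ds$, valid while the right side stays positive. That is a short-time lower bound of the same kind as (\ref{3dlg-E24}). It is not an estimate of the form (\ref{3dlg-E26}) with $F_0$ built from negative powers of $M_0$.

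\textbf{On (\ref{3dlg-E25}).} The ``main obstacle'' you flag is self-inflicted. The paper never needs pointwise control of $\varrho_0^\alpha|\frac{v_y}{J}|^qJ$. It first bounds
$\|v_y(\cdot,s)\|_{L^\infty}\leqslant C\big(\int\varrho_0^\alpha|\frac{v_y}{J}|^qJdy\big)^{\frac1{2q}}\big(\int\varrho_0^\alpha|\frac{v_y}{J}|^{q-2}\frac{v_{yy}^2}{J}dy\big)^{\frac1{2q}}(\sup_yJ)^{\frac{q-1}{q}}\overline\varrho^{-\frac{\alpha}{q}}$
by Lemma \ref{2dlg-L2.1}, keeping $\sup_yJ$ inside. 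It then applies H\"older and Young in time to $\sup_yJ(t)\leqslant\sup_yJ_0+\int_0^t\|v_y\|_{L^\infty}ds$. This is a linear Gronwall inequality, and it yields $M_0$ linear in $X(t):=H_0(t)+\int_0^tH_0^{\frac{2q}{q-1}}(\cdots)ds$.

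Your route (pointwise Young plus the fundamental theorem of calculus in $y$) can be made to work. The time integral of $\|\partial_y(\varrho_0^\alpha|\frac{v_y}{J}|^qJ)\|_{L^1}$ is controlled by the dissipation in (\ref{3dlg-E5}). However, the term containing $J_y$ is of order $X^{3/2}$, so you get a bound of the same type, not literally $M_0$.

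Also, Young with your splitting gives $\overline\varrho^{-\frac{\alpha}{q-1}}$, not $\overline\varrho^{-\frac{\alpha}{q}}$, in the exponential. The paper has the same slip, and it is harmless on a bounded time interval.
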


\begin{proof}
According to Lemma \ref{2dlg-L2.1} and  Proposition \ref{3dlg-P3.1}, one deduces that
\begin{equation*}\label{3dlg-E27}
\begin{aligned}
\int_0^t\|v_y\|_{L^\infty}ds&\leqslant C\int_0^t\Big(\int\varrho_0^\alpha|\frac{v_y}{J}|^qJdy\Big)^{\frac1{2q}}\Big(\int\varrho_0^\alpha|\frac{v_y}{J}|^{q-2}\frac {v_{yy}^2}Jdy\Big)^{\frac1{2q}}{\overline J}^{\frac{q-1}q}{\overline\varrho}^{\frac{-\alpha}q}ds\\
&\leqslant C\Bigg(H_0(t)+\int_0^tH_0^{\frac{2q} {q-1}}(s)\bigg(1-\frac{q+1}{q-1}H_0^{\frac{q+1}{q-1}}(s)s\bigg)^{-\frac{2q}{q+1}}ds\Bigg)^{\frac1 q}\Big(\int_0^t{\overline J}{\overline\varrho}^{-\frac{\alpha}{q-1}}ds\Big)^{\frac{q-1}{q}}.
\end{aligned}
    \end{equation*}
So, it is easy to get that
\begin{equation*}\label{3dlg-E28}
\|J-J_0\|_{L^\infty}(t)\leqslant\underline J/4
    \end{equation*}
on $[0,t_1^*]$ for some $t_1^*$. Then, ${\frac 3 4}\underline J\leqslant J\leqslant{\frac 5 4}\overline J\mbox{ on }{ \mathbb R } \times [0,t_1^*].$ Moreover,  (\ref{1dlg-E13}) implies that
\begin{eqnarray*}
&\sup\limits_{y\in\mathbb R}J(y,t)&\leqslant\sup\limits_{y\in\mathbb R}J_0+\int_0^t\|v_y\|_{L^\infty}ds\\
&&\leqslant\sup\limits_{y\in\mathbb R}J_0+ C\Bigg(H_0(t)+\int_0^tH_0^{\frac{2q} {q-1}}(s)\bigg(1-\frac{q+1}{q-1}H_0^{\frac{q+1}{q-1}}(s)s\bigg)^{-\frac{2q}{q+1}}ds\Bigg)^{\frac1 q}\\
&&~~~~~~~~~~~~~~\cdot\Big(\int_0^t\sup\limits_{y\in\mathbb R}J(y,s){\overline\varrho}^{-\frac{\alpha}{q-1}}ds\Big)^{\frac{q-1}{q}}\\
&&\leqslant \frac{C}{q}\bigg(q\sup\limits_{y\in\mathbb R}J_0+H_0(t)+\int_0^tH_0^{\frac{2q} {q-1}}(s)\bigg(1-\frac{q+1}{q-1}H_0^{\frac{q+1}{q-1}}(s)s\bigg)^{-\frac{2q}{q+1}}ds\bigg)\\
&&~~~~~~+\frac{q-1}{q}{\overline\varrho}^{-\frac{\alpha}{q}}\int_0^t\sup\limits_{y\in\mathbb R}J(y,s)ds.
\end{eqnarray*}
By Gronwall's inequality, one gets that
\begin{equation*}\label{3dlg-E29}
\sup\limits_{y\in\mathbb R}J(y,t)\leqslant2M_0(t)\exp\bigg(\overline\varrho^{-\frac{\alpha}{q}}\frac{q-1}{q}t\bigg)
    \end{equation*}
on $[0,\widetilde{t_1}]$, where $\widetilde{t_1}=\min\{T,t_1^*,\widetilde{t_0}\}$ with $\widetilde{t_0}$ stated in Proposition \ref{3dlg-P3.1} and
\begin{equation*}
M_0(t)=\frac C q\bigg(q\sup\limits_{y\in\mathbb R}J_0+H_0(t)+\int_0^tH_0^{\frac{2q} {q-1}}(s)\bigg(1-\frac{q+1}{q-1}H_0^{\frac{q+1}{q-1}}(s)s\bigg)^{-\frac{2q}{q+1}}ds\bigg).
    \end{equation*}

On the other hand, one finds that
\begin{equation*}
\Big(\frac1 J\Big)_t=-\frac{1} {J^2}v_y\leqslant\Big(\sup\limits_{y\in\mathbb R}J(y,t)\Big)^{-2}\|v_y\|_{L^\infty}\leqslant CM_0^{-2}(t)\exp\bigg(\frac{-2\overline\varrho^{-\frac{\alpha}{q}}(q-1)t}{q}\bigg)\|v_y\|_{L^\infty}.
\notag
    \end{equation*}
Thus,
\begin{eqnarray*}\label{3dlg-E30}
&&\frac1 J\leqslant\frac{1}{J_0}+C\int_0^tM_0^{-2}(s)\exp\bigg(\frac{-2\overline\varrho^{-\frac{\alpha}{q}}(q-1)s}{q}\bigg)\|v_y\|_{L^\infty}ds\nonumber\\
&&~~~~~\leqslant\frac{1}{J_0}+ C\Bigg(H_0(t)+\int_0^tH_0^{\frac{2q} {q-1}}(s)\bigg(1-\frac{q+1}{q-1}H_0^{\frac{q+1}{q-1}}(s)s\bigg)^{-\frac{2q}{q+1}}ds\Bigg)^{\frac1 q}\nonumber\\
&&~~~~~\cdot\Bigg(\int_0^t\sup\limits_{y\in\mathbb R}J(y,s){\overline\varrho}^{-\frac{\alpha}{q-1}}M_0^{\frac{-2q}{q-1}}(s)\exp\bigg(\frac{-2\overline\varrho^{-\frac{\alpha}{q}}s}{q}\bigg)ds\Bigg)^{\frac{q-1}{q}}\\
&&~~~~~\leqslant\frac{1}{J_0}+C\Big(q(M_0(t)-\sup\limits_{y\in\mathbb R}J_0)\Big)^{\frac1 q}\Bigg(\int_0^t{\overline\varrho}^{-\frac{\alpha}{q-1}}M_0^{-\frac{q+1}{q-1}}(s)\exp\bigg(\frac{-(q+1)
\overline\varrho^{-\frac{\alpha}{q}}s}{q}\bigg)ds\Bigg)^{\frac{q-1}{q}}\nonumber
    \end{eqnarray*}
holds on  $[0,\widetilde{t_1}]$.
\end{proof}

Denote
$${\mathcal L}(t):=\Big((\inf\limits_{y\in\mathbb R}J)^{-1}+\|J\|_{L^\infty}+\|J_y\|_{L^2}+\|\varrho_0^{\frac{\alpha+1}{2}}v\|_{L^2}+\|v\|_{W^{1,q}}+\|\varrho_0^{\frac{\alpha+2}{2}}\Theta\|_{L^2}
+\|\Theta\|_{W^{1,p}}\Big)(t).$$
According to Proposition \ref{3dlg-P3.1}-Proposition \ref{3dlg-P3.3}, it is easy to get that
\begin{equation}
\mathcal {L}(t)\leqslant C(\inf\limits_{y\in\mathbb R}J_0)^{-1}\bigg(1+C(\inf\limits_{y\in\mathbb R}J_0)\Big[q(M_0(t)-\sup\limits_{y\in\mathbb R}J_0)\Big]^{\frac1 q}\Big(\int_0^tF_0(s)ds\Big)^{\frac{q-1} q}\bigg)
\notag
    \end{equation}
for any $t\in[0,T_0]$ with $T_0 = \min\{\widetilde{t_0},\widetilde{t_1}\},$ where $G_0(t)$ and $F_0(t)$ are mentioned as in
Proposition \ref{3dlg-P3.3} there. Starting from a time $t_0$ with $t_0<T_0$, one can find that the quaternion $(J,\varrho, v, \Theta)|_{t=t_0}$ satisfies the conditions on the initial data stated in Proposition \ref{3dlg-P3.1}. Thus, the solution $(J,\varrho, v, \Theta)$ can be extended forward in
time to another time $t_1 =t _0 +\overline{t_1}$, for some positive time $\overline{t_1}$ depending only on $\overline\varrho$ and the upper bound of $\mathcal {L}(t_0).$ Besides, the extended solution $(J,\varrho, v, \Theta)$ has the same regularities as expressed in Proposition \ref{3dlg-P3.1}-Proposition \ref{3dlg-P3.3} on $[0,t_1]$. Particularly, the time $t_1$ is independent of $\underline\varrho$ by Proposition \ref{3dlg-P3.2} and Proposition \ref{3dlg-P3.3}, but is controlled by certain conditions (see remark \ref{3dlf-R1}).

\begin{remark}\label{3dlf-R1}
Let $p,q\in(1,2),$ and ${\overline J}, {\overline\varrho}, {\underline J}$ and $H_0(t)$ stated in Proposition \ref{3dlg-P3.3}. Let the quaternion $(J,\varrho, v,\Theta)$ be a strong solution to the problem (\ref{1dlg-E16})-(\ref{1dlg-E17}) on ${\mathbb R}\times [0, t_1].$ Then the time $t_1$ is determined by
\begin{eqnarray*}
&&\frac{q+1}{q-1}H_0^{\frac{q+1}{q-1}}(t_1)t_1<1,\label{3dlg-E31}\\
&&C\Bigg(H_0(t)+\int_0^tH_0^{\frac{2q} {q-1}}(s)\bigg(1-\frac{q+1}{q-1}H_0^{\frac{q+1}{q-1}}(s)s\bigg)^{-\frac{2q}{q+1}}ds\Bigg)^{\frac1 q}\Big(\int_0^t{\overline J}{\overline\varrho}^{-\frac{\alpha}{q-1}}ds\Big)^{\frac{q-1}{q}}\leqslant\underline J/4,\nonumber\\\label{3dlg-E31}
    \end{eqnarray*}
where $\alpha<\min\{-\frac{q}{2(q-1)},-\frac{4-p}{2-p}\}.$
\end{remark}

According to Proposition \ref{3dlg-P3.1}-Proposition \ref{3dlg-P3.3} and the argument above, the following corollary is deduced directly. The idea to prove the following corollary is similar to the Corollary 3.1 in \cite{Fang-Zang-2023}.  In order to avoid repetition and ensure the paper being of reasonable length, the proof of Corollary \ref{3dlg-C1} is deleted.

\begin{corollary}\label{3dlg-C1}
Let $p,q\in(1,2)$ and $\alpha<\min\{-\frac{q}{2(q-1)},-\frac{4-p}{2-p}\}.$  Assume that the initial data $(J_0,\varrho_0, v_0, \Theta_0)$ satisfies
\begin{eqnarray}
&&0<\underline\varrho\leqslant\varrho_0\leqslant\overline\varrho\ \mbox{ on }\ {\mathbb R}\mbox{ with }\overline\varrho\geqslant1,\quad (\varrho_0^\alpha)_y\in L^p\cap L^\infty,\label{3dlg-E33}\\
&&0<\underline J\leqslant J_0\leqslant\overline J\ on\ \mathbb R,\quad\varrho_0^{\frac{\alpha-1}{2}}J_0^{\prime}\in L^2,\quad\varrho_0^{{\alpha-1}}J_0^{\prime}\in L^1,\label{3dlg-E34}\\
&&\varrho_0^{\frac{\alpha+1}{2}}v_0\in L^2_{loc},\quad\varrho_0^{\frac{\alpha+1}{2}}v_0^{\prime}\in L^2,\quad\varrho_0^{\frac{\alpha}{q}}\big|\frac{v_0^{\prime}}{J_0}\big|J_0^{\frac1 q}\in L^q,\label{3dlg-E35}\\
&&\varrho_0^{\frac{\alpha+2}{2}}\Theta_0\in L^2,\quad\varrho_0^{\frac{\alpha+1}{2}}\Theta_0^{\prime}\in L^2,\quad\varrho_0^{\alpha+1}\Theta_0\in L^1,\quad\varrho_0^{\frac{\alpha+2}{p}}\big|\frac{\Theta_0^{\prime}}{J_0}\big|J_0^{\frac1 p}\in L^p\label{3dlg-E36}
    \end{eqnarray}
for some positive constants $\underline\varrho, \overline\varrho,\underline J$ and $\overline J$.

Then, there exists a positive time $T_0$ such that the problem (\ref{1dlg-E16})-(\ref{1dlg-E17}) admits a unique strong solution $(J,\varrho, v, \Theta)$ on $(-r,r) \times [0,T_0].$
Moreover, the solution $(J,\varrho, v, \Theta)$ has the following properties that
\begin{eqnarray}
&&{\frac 3 4}\underline J\leqslant J\leqslant{\frac 5 4}\overline J\mbox{ and } 0<{\frac {4\underline J}{5\overline J}}\underline \varrho\leqslant \varrho\leqslant{\frac {4\underline J}{3\overline J}}\overline \varrho\ on\  (-r,r)  \times [0,T_0],\nonumber\\
&&\sup\limits_{t\in[0,T_0]}\Big(\left\|\varrho_0^{\frac{\alpha+1}{2}}v\right\|_{L^2(-r,r)}^2+\left\|\varrho_0^{\alpha+1}\Theta\right\|_{L^1(-r,r)}
+\left\|\varrho_0^{\frac{\alpha+2}{2}}\Theta\right\|_{L^2(-r,r)}^2+\left\|\varrho_0^{\frac{\alpha+1}{2}}v_y\right\|_{L^2(-r,r)}^2\nonumber\\
&&+\left\|\varrho_0^{\frac{\alpha+1}{2}}\Theta_y\right\|_{L^2(-r,r)}^2+\left\|\varrho_0^{\frac{\alpha}{q}}|\frac{v_y}{J}|J^{\frac1q}\right\|_{L^q(-r,r)}^q
+\left\|\varrho_0^{\frac{\alpha+2}{p}}|\frac{\Theta_y}{J}|J^{\frac1p}\right\|_{L^p(-r,r)}^p+\left\|\varrho_0^{\frac{\alpha}{2}}J_y\right\|_{L^2(-r,r)}^2\Big)\nonumber\\
&&+\int_0^{T_0}\bigg(\left\|\varrho_0^{\frac{\alpha}{q}}|\frac{v_y}{J}|J^{\frac1q}\right\|_{L^q(-r,r)}^q
+\left\|\varrho_0^{\frac{\alpha+1}{p}}|\frac{\Theta_y}{J}|J^{\frac1p}\right\|_{L^p(-r,r)}^p
+\left\|\varrho_0^{\frac{\alpha}{2}}|\frac{v_y}{J}|^{\frac{q-2}{2}}\frac{v_{yy}}{\sqrt{J}}\right\|_{L^2(-r,r)}^2\nonumber\\
&&+\left\|\varrho_0^{\frac{\alpha}{2}}|\frac{\Theta_y}{J}|^{\frac{p-2}{2}}\frac{\Theta_{yy}}{\sqrt{J}}\right\|_{L^2(-r,r)}^2
+\left\|\varrho_0^{\frac{\alpha+1}{2}}v_t\right\|_{L^2(-r,r)}^2+\left\|\varrho_0^{\frac{\alpha+3}{2}}\Theta_t\right\|_{L^2(-r,r)}^2\bigg)ds\leqslant C\label{3dlg-Eq37}
\end{eqnarray}
hold for any $0 < r < \infty.$ Here, the time $T_0$ and the constant $C>0$ on the right hand of (\ref{3dlg-Eq37}) depend on  $p, q, \underline J, \overline J, \overline\varrho, \|(\varrho_0^\alpha)_y\|_{{L^2}\cap{L^\infty}},  \|\varrho_0^{\frac{\alpha}{2}}J_0^{\prime}\|_{L^2}, \|\varrho_0^{\frac{\alpha+1}{2}}v_0\|_{L^2_{loc}}, \|\varrho_0^{\frac{\alpha+1}{2}}v_0^{\prime}\|_{L^2}, \Big\|\varrho_0^{\frac{\alpha}{q}}\big|\frac{v_0^{\prime}}{J_0}\big|J_0^{\frac1 q}\Big\|_{L^q},\|\varrho_0^{\frac{\alpha+2}{2}}\Theta_0\|_{L^2},\\
\|\varrho_0^{\frac{\alpha+1}{2}}\Theta_0^{\prime}\|_{L^2}$ and $\Big\|\varrho_0^{\frac{\alpha+2}{p}}\big|\frac{\Theta_0^{\prime}}{J_0}\big|J_0^{\frac1 p}\Big\|_{L^p}$, but $\underline\varrho$ and $r$.
\end{corollary}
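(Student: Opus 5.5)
The proof is an assembly of Propositions \ref{3dlg-P3.1}--\ref{3dlg-P3.3} together with a continuation argument whose step size is independent of $\underline\varrho$. Under (\ref{3dlg-E33})--(\ref{3dlg-E36}) the density is bounded below by $\underline\varrho>0$, so the data fall, at least locally, within the nonvacuum framework of Proposition \ref{3dlg-P3.1}. That proposition gives a unique local strong solution on some short interval $[0,T_*]$ with regularity ([\bf H]); at this stage $T_*$ may still depend on $\underline\varrho$. The conditions $v_0\in W^{1,q}$ and $\Theta_0\in W^{1,p}$ required there are not literally assumed. I would obtain them by mollifying the data and cutting them off at large $|y|$, with the weighted norms in (\ref{3dlg-E35})--(\ref{3dlg-E36}) kept uniformly bounded. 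Since $\varrho_0\le\overline\varrho$ and $\alpha<0$, we have $\varrho_0^\alpha\ge\overline\varrho^{\,\alpha}$, so the weighted quantities dominate unweighted ones up to powers of $\overline\varrho$. At the end I would pass to the limit using the uniform bounds.

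The key step is to show that the lifespan does not degenerate as $\underline\varrho\to0$. Applying Proposition \ref{3dlg-P3.2}, I would bound the weighted energy $E(t)=\int\varrho_0^\alpha(\varrho_0v^2+\dots+J_y^2)\,dy$ and its dissipation via (\ref{3dlg-E4})--(\ref{3dlg-E5}) on any interval where (\ref{3dlg-E7}) holds. Here $H_0$ and $G_0$ depend only on $\overline\varrho$, $\underline J$, $\overline J$, $\|(\varrho_0^\alpha)_y\|_{L^p\cap L^\infty}$ and the initial weighted norms, never on $\underline\varrho$. Proposition \ref{3dlg-P3.3} then gives $\frac34\underline J\le J\le\frac54\overline J$ on $[0,\widetilde t_1]$, again with $\widetilde t_1$ independent of $\underline\varrho$. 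I would fix $T_0=\min\{\widetilde t_0,\widetilde t_1\}$, chosen by the two inequalities in Remark \ref{3dlf-R1}. On $[0,T_0]$ the controlling quantity $\mathcal L(t)$ is bounded by a constant depending only on the listed quantities. Applying Proposition \ref{3dlg-P3.1} repeatedly from times $t_0<T_0$, with steps determined by the bound on $\mathcal L$ and by $\overline\varrho$, then extends the solution to all of $[0,T_0]$. The density bounds follow directly from $J\varrho=J_0\varrho_0$ and the bounds on $J$.

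The main obstacle is localization. The data satisfy only $\varrho_0^{\frac{\alpha+1}2}v_0\in L^2_{loc}$, and the estimates must be stated on $(-r,r)$ uniformly in $r$. I would handle this by truncation: replace $v_0$ by $v_0\chi_R$ with a smooth cutoff $\chi_R$ equal to $1$ on $(-R,R)$. The derivative norms are then controlled, because the extra term $\varrho_0^{\frac{\alpha+1}2}v_0\chi_R'$ is supported on $R<|y|<2R$ with $|\chi_R'|\le C/R$. The zeroth-order energy in Step 1 of Proposition \ref{3dlg-P3.2} is only needed on compact sets. The inputs to $H_0$ that are actually used (derivative terms, $\Theta$ terms, and $\|\varrho_0^{\frac{\alpha+1}2}v_0\|$ on a fixed compact set) are therefore uniform in $R$. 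The resulting bound (\ref{3dlg-Eq37}) on $(-r,r)$ is independent of $R$ and of $\underline\varrho$. Letting $R\to\infty$ and removing the mollification by weak and weak-$*$ compactness yields the solution, and the lower semicontinuity of the norms yields (\ref{3dlg-Eq37}). Uniqueness follows from the uniqueness part of Proposition \ref{3dlg-P3.1}, or from an energy estimate on the difference of two solutions using the monotonicity inequalities in Lemma \ref{2dlg-L2.3}.
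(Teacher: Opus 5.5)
Your main line matches what the paper intends. It uses the $\underline\varrho$-independent weighted bounds of Propositions \ref{3dlg-P3.2}--\ref{3dlg-P3.3}, the bound $\frac34\underline J\le J\le\frac54\overline J$, continuation via Proposition \ref{3dlg-P3.1}, and density bounds from $J\varrho=J_0\varrho_0$. The paper omits the proof, saying the corollary follows from Propositions \ref{3dlg-P3.1}--\ref{3dlg-P3.3} and the extension argument preceding Remark \ref{3dlf-R1}. One precision: by Proposition \ref{3dlg-P3.1}, each continuation step may also depend on the current lower bound of the density, which stays $\ge\frac{4\underline J}{5\overline J}\underline\varrho$. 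This is harmless, since the step is uniform on $[0,T_0]$ and $T_0$ itself does not depend on $\underline\varrho$.

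The part that fails is your localization paragraph, which you call the main obstacle.

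\emph{The $r$-uniform bound is impossible under the literal hypothesis.} No argument can give (\ref{3dlg-Eq37}) with $C$ independent of $r$ when only $\varrho_0^{\frac{\alpha+1}{2}}v_0\in L^2_{loc}$ is assumed. Take $v_0\equiv1$; the other hypotheses are unaffected. At $t=0$ the first term equals $\int_{-r}^{r}\varrho_0^{\alpha+1}dy\ge 2r\,\overline\varrho^{\,\alpha+1}$, because $\alpha+1<0$, and this grows without bound in $r$.

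\emph{Your mechanism is not supplied by Proposition \ref{3dlg-P3.2}.} That proposition uses the whole-line quantity $\int\varrho_0^{\alpha+1}v^2dy$ in the bounds for $I_1$ and $I_2$ and in the superlinear term. Its $H_0$ contains the global weighted $L^2$ norm of $v_0$, which for $v_0\chi_R$ grows with $R$. Claiming the zeroth-order energy is ``only needed on compact sets'' would require redoing Step 1 with a cutoff, and that gives constants depending on $r$.

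\emph{The cutoff estimate is also incomplete.} The bound $|\chi_R'|\le C/R$ alone does not make $\varrho_0^{\frac{\alpha+1}{2}}v_0\chi_R'$ small for merely $L^2_{loc}$ data. You would need growth control of $v_0$ from $v_0'\in L^q$, with constants depending on $\underline\varrho$.

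\emph{The consistent reading.} The hypothesis should be $\varrho_0^{\frac{\alpha+1}{2}}v_0\in L^2(\mathbb R)$. This matches $(\mathrm{H}3)$ and how the corollary is applied to $\varrho_{0\varepsilon}=\varrho_0+\varepsilon$ in Section \ref{4dlg-S4}, where $\varrho_{0\varepsilon}^{\frac{\alpha+1}{2}}\le\varrho_0^{\frac{\alpha+1}{2}}$. Under this reading your cutoff is harmless, since $\|\varrho_0^{\frac{\alpha+1}{2}}v_0\chi_R'\|_{L^2}\le\frac CR\|\varrho_0^{\frac{\alpha+1}{2}}v_0\|_{L^2}$. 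It only serves to meet the literal requirement $v_0\in W^{1,q}$ of Proposition \ref{3dlg-P3.1}. The bounds on $(-r,r)$ are then just restrictions of the whole-line estimates, so no localization argument is needed.
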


\section{Local existence in the presence of far field vacuum}\label{4dlg-S4}

In this section, we intend to prove the local existence of strong solutions to the problem (\ref{1dlg-E16})-(\ref{1dlg-E17}), in the presence of vacuum at far fields. We start with the uniqueness of the solutions. For convince, it is also assumed that $J_0\equiv1$ throughout this section.

\begin{proposition}\label{4dlg-P1}
Let $p,q\in(1,2)$ and $\alpha<\min\{-\frac{q}{2(q-1)},-\frac{4-p}{2-p}\}.$ Let $\varrho_0$ satisfy $\inf\limits_{y\in(-r,r)}\varrho_0(y)>0$ for any $r\in(0,\infty)$ and $\varrho_0\leqslant\overline\varrho$ on $\mathbb R$ for a positive constant $\overline\varrho\geqslant1$. Assume that two quaternions  $(J_1,\varrho_1,v_1,\Theta_1)$ and $(J_2,\varrho_2,v_2,\Theta_2)$ are two solutions to the problem (\ref{1dlg-E16})-(\ref{1dlg-E17}) on $\mathbb R \times [0,T]$ meeting the same initial conditions and satisfying
\begin{eqnarray*}
&& c_0\leqslant J_i\leqslant C_0\mbox{ for some positive constants $c_0$ and $C_0$},\quad \varrho_i\geqslant0\ \mbox{ on}\ {\mathbb R} \times [0,T],  \\
&&J_i-J_0\in C([0,T];L^2),\quad J_{it}\in L^\infty(0,T;L^q)\cap L^\infty(0,T;L^2), \\
&&\varrho_0^{\frac\alpha 2}J_{iy}\in L^\infty(0,T;L^2),\quad\varrho_0^{\frac{\alpha+1} 2}v_i\in C([0,T];L^2),\quad\varrho_0^{\frac{\alpha+2} 2}\Theta_i\in C([0,T];L^2),\\
&&\varrho_0^{\frac{\alpha+1} 2}v_{iy}\in L^\infty(0,T;L^2),\quad\varrho_0^{\frac{\alpha+1} 2}\Theta_{iy}\in L^\infty(0,T;L^2), \\
&&\varrho_0^{\frac{\alpha+1} 2}v_{it}\in L^2(0,T;L^2),\quad\varrho_0^{\frac{\alpha+3} 2}\Theta_{it}\in L^2(0,T;L^2), \\
&&\varrho_0^{\frac\alpha q}|\frac {v_{iy}} {J_i}|J_i^{\frac 1 q}\in L^\infty(0,T;L^q)\cap L^q(0,T;L^q),\quad\varrho_0^{\frac\alpha p}|\frac {\Theta_{iy}} {J_i}|J_i^{\frac 1 p}\in L^\infty(0,T;L^p)\cap L^p(0,T;L^p),\\
&&\varrho_0^{\frac\alpha 2}|\frac {v_{iy}} J_i|^{\frac{q-2}2}\frac {v_{iyy}}{\sqrt {J_i}}\in L^2(0,T;L^2),\quad\varrho_0^{\frac\alpha 2}|\frac {\Theta_{iy}}{ J_i}|^{\frac{p-2}2}\frac {\Theta_{iyy}}{\sqrt {J_i}}\in L^2(0,T;L^2);
    \end{eqnarray*}
for $i = 1, 2.$ Then $(J_1,\varrho_1,v_1,\Theta_1)\equiv(J_2,\varrho_2,v_2,\Theta_2).$
\end{proposition}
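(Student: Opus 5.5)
We prove uniqueness by a weighted energy estimate on the differences, closed with Gronwall's inequality. Set
\[
J=J_1-J_2,\qquad v=v_1-v_2,\qquad \Theta=\Theta_1-\Theta_2 .
\]
Since $J_0\equiv1$, we have $\varrho_i=\varrho_0/J_i$. Subtracting the two copies of (\ref{1dlg-E14}) and (\ref{1dlg-E15}) gives
\begin{eqnarray*}
&&\varrho_0 v_t-\Big(\Phi_q\big(\tfrac{v_{1y}}{J_1}\big)-\Phi_q\big(\tfrac{v_{2y}}{J_2}\big)\Big)_y+R\Big(\tfrac{\varrho_0\Theta_1}{J_1}-\tfrac{\varrho_0\Theta_2}{J_2}\Big)_y=0,\\
&&\varrho_0\Theta_t-\Big(\Phi_p\big(\tfrac{\Theta_{1y}}{J_1}\big)-\Phi_p\big(\tfrac{\Theta_{2y}}{J_2}\big)\Big)_y+R\Big(\tfrac{\varrho_0\Theta_1v_{1y}}{J_1}-\tfrac{\varrho_0\Theta_2v_{2y}}{J_2}\Big)=J_1\big|\tfrac{v_{1y}}{J_1}\big|^q-J_2\big|\tfrac{v_{2y}}{J_2}\big|^q,
\end{eqnarray*}
with $\Phi_r(\xi)=|\xi|^{r-2}\xi$. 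Together with $J_t=v_y$ and $J(0)=0$, this is the system for the differences.

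We test the momentum difference with $\varrho_0^{\alpha}v$, the temperature difference with $\varrho_0^{\alpha+1}\Theta$, and $J_t=v_y$ with $\varrho_0^{\alpha}J$ (or with a smaller weight), then integrate over $\mathbb R$. The energy functional is
\[
E(t)=\int\varrho_0^{\alpha}\big(\varrho_0 v^2+\varrho_0^2\Theta^2+J^2\big)dy .
\]
\begin{itemize}
\item \emph{Dissipative terms.} We write
\[
\frac{v_{1y}}{J_1}-\frac{v_{2y}}{J_2}=\frac{v_y}{J_1}-\frac{v_{2y}J}{J_1J_2}.
\]
The first inequality of Lemma \ref{2dlg-L2.3} then yields a coercive term
\[
C_1\int\varrho_0^{\alpha}\Big(1+\big|\tfrac{v_{1y}}{J_1}\big|^2+\big|\tfrac{v_{2y}}{J_2}\big|^2\Big)^{\frac{q-2}{2}}\frac{v_y^2}{J_1}\,dy
\]
plus error terms containing $J$. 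The analogous term for $\Theta$ is handled the same way.
\item \emph{Lower bound on the weight.} By Lemma \ref{2dlg-L2.1} and the assumed regularity, $\|v_{iy}\|_{L^\infty}\in L^{2q}(0,T)$, and similarly for $\Theta_{iy}$. Hence the factor $(1+\cdots)^{\frac{q-2}{2}}$ is bounded below by $c\,(1+\|v_{iy}\|_{L^\infty})^{q-2}$. This gives control of
\[
D(t)=\int\varrho_0^{\alpha}v_y^2\,dy+\int\varrho_0^{\alpha+1}\Theta_y^2\,dy
\]
with a time weight $w(t)=(1+\|v_{iy}\|_{L^\infty}+\|\Theta_{iy}\|_{L^\infty})^{q-2}$ (respectively with exponent $p-2$).
\item \emph{Error terms containing $J$.} These come from the mismatch $v_{2y}J/(J_1J_2)$. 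We bound them by the second inequality of Lemma \ref{2dlg-L2.3},
\[
|\Phi_r(a)-\Phi_r(b)|\leqslant C_2|a-b|^{r-1},
\]
or directly via the mean value theorem away from the singularity. Young's inequality absorbs them into $w\,D$ plus $w^{-1}g(t)E$, with $g$ integrable. Integrating by parts with $(\varrho_0^\alpha)_y$ produces terms that are controlled using $(\varrho_0^\alpha)_y\in L^\infty$ and $\varrho_0\leqslant\overline\varrho$ (note that $\alpha<0$, so $\varrho_0^{\alpha}\geqslant\overline\varrho^{\alpha}$).
\item \emph{Pressure and heating terms.} The pressure difference decomposes as
\[
\frac{\varrho_0\Theta}{J_1}-\frac{\varrho_0\Theta_2J}{J_1J_2}.
\]
After integration by parts it is bounded by $\|\Theta_2\|_{L^\infty}$, $c_0^{-1}$ and $E$. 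The heating difference $J_1|v_{1y}/J_1|^q-J_2|v_{2y}/J_2|^q$ is bounded by
\[
C\big(|v_{1y}|^{q-1}+|v_{2y}|^{q-1}\big)\big(|v_y|+|J|\big),
\]
tested against $\varrho_0^{\alpha+1}\Theta$. Using $\|\Theta\|_{L^\infty}$-type interpolation with the weights, this is absorbed in the same way.
\item \emph{The $J$ equation.} From $J_t=v_y$ we get
\[
\frac{d}{dt}\int\varrho_0^{\alpha}J^2\,dy\leqslant \eta\, w\int\varrho_0^{\alpha}v_y^2\,dy+C\eta^{-1}w^{-1}\int\varrho_0^{\alpha}J^2\,dy .
\]
\end{itemize}

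Collecting the estimates gives
\[
E'(t)+c\,w(t)D(t)\leqslant h(t)E(t),\qquad h\in L^1(0,T),
\]
where integrability of $h$ follows from the $L^2(0,T)$ and $L^\infty(0,T)$ regularity listed in the hypotheses together with Lemma \ref{2dlg-L2.1}. Since $E(0)=0$, Gronwall's inequality gives $E\equiv0$. Hence $v\equiv0$, $\Theta\equiv0$, $J\equiv0$, and then $\varrho_1=\varrho_2$ follows from (\ref{1dlg-E16}).

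The main obstacle is the degeneracy of the $q$- and $p$-Laplacian monotonicity for $r<2$. The coercivity weight $(1+|\xi|^2)^{\frac{r-2}{2}}$ degenerates where the gradients are large, and the mismatch terms carry powers $|\cdot|^{r-1}$ with $r-1<1$, which are not Lipschitz. I would first try to control everything via $L^\infty$ bounds of $v_{iy}$ and $\Theta_{iy}$ from Lemma \ref{2dlg-L2.1}, so that $w^{-1}$ is integrable in time. If that fails for the $\Theta$ part (where only $p$-integrability is available), I would instead use the weaker energy $\int\varrho_0^{\alpha+2}\Theta^2\,dy$, accepting a heavier density weight.
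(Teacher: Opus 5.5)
The paper omits this proof and defers to \cite{Fang-Zang-2023}, so what matters is whether your scheme closes. As written it does not.

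\textbf{The gap: commutators of the flux difference.} Test the $v$-difference with $\varrho_0^{\alpha}v$ and the $\Theta$-difference with $\varrho_0^{\alpha+1}\Theta$. After integration by parts you get the commutators
\[
\int\big(\Phi_q(a_1)-\Phi_q(a_2)\big)(\varrho_0^{\alpha})_y\,v\,dy,\qquad \int\big(\Phi_p(b_1)-\Phi_p(b_2)\big)(\varrho_0^{\alpha+1})_y\,\Theta\,dy,
\]
with $a_i=v_{iy}/J_i$ and $b_i=\Theta_{iy}/J_i$. In these, the flux difference is not paired with $a_1-a_2$ (resp.\ $b_1-b_2$).

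For $r<2$, $\Phi_r$ is only $(r-1)$-H\"older at the origin. Take $a_2=0$ and $a_1=\delta$: the flux difference is $\delta^{q-1}$, while the monotone quantity is $\delta^{q}$, and only about $C_1\delta^2$ in the form of Lemma~\ref{2dlg-L2.3} that you use. So $|\Phi_q(a_1)-\Phi_q(a_2)|^2$ is not bounded by any multiple of the coercive term. No Young splitting leaves a remainder quadratic in $v$:
\begin{itemize}
\item with the second inequality of Lemma~\ref{2dlg-L2.3} the remainder is of order $|v|^{2/(3-q)}$;
\item near vanishing gradients the commutator is homogeneous of degree $q<2$ in the differences, while $E$ has degree $2$.
\end{itemize}
You end up with $E'\leqslant hE+gE^{\beta}$ with $\beta<1$, and together with $E(0)=0$ this does not force $E\equiv0$.

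Your time weight $w(t)$ only compensates the degeneracy of the coercivity at large gradients. It does nothing about this singularity at small gradients, which is the non-Lipschitz issue you raise at the end but leave open.

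The same defect appears if you bound the $J$-mismatch terms by $C_2|a-b|^{r-1}$. Those terms are harmless only through $|\Phi_r(a)-\Phi_r(b)|\leqslant C(|a|+|b|)^{r-2}|a-b|$. This works because they carry the factor $a_2$ (resp.\ $b_2$), and $(|a_1|+|a_2|)^{r-2}|a_2|\leqslant(\|a_1\|_{L^\infty}+\|a_2\|_{L^\infty})^{r-1}$.

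\textbf{A possible repair.} Put no spatial weight on the flux terms.
\begin{itemize}
\item Test the $v$- and $\Theta$-differences with $v$ and $\Theta$ themselves, through cutoffs $\xi_R$. The resulting flux commutators are $O(R^{\frac1q-1}\|v\|_{L^\infty}\|a_i\|_{L^q}^{q-1})$, resp.\ $O(R^{\frac1p-1}\|\Theta\|_{L^\infty}\|b_i\|_{L^p}^{p-1})$, and vanish as $R\to\infty$.
\item Test $J_t=v_y$ with $J$, and take $E=\int(\varrho_0v^2+\varrho_0\Theta^2+J^2)\,dy$.
\end{itemize}
Then every flux difference meets only $a_1-a_2$, $b_1-b_2$, $a_2J$ or $b_2J$. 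The pressure, pressure-work and $J$ terms close with coefficients in $L^1(0,T)$, using your bounds $\|a_i\|_{L^\infty}\in L^{2q}(0,T)$, $\|b_i\|_{L^\infty}\in L^{2p}(0,T)$ and $\Theta_2\in L^\infty$.

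The price is that $\Theta$ is seen only through $\int\varrho_0\Theta^2\,dy$. So the heating difference $J_1(|a_1|^q-|a_2|^q)+J|a_2|^q$ needs pointwise decay of $v_{iy}$ relative to $\varrho_0$. Differentiating $\varrho_0^{\alpha}|v_{iy}|^q$ gives $K(t)=\sup_y\varrho_0^{\alpha}|v_{iy}|^q\in L^2(0,T)$. This uses $(\varrho_0^\alpha)_y\in L^\infty$ from (H1); your plan relies on it too, although it is not among the hypotheses of the proposition. Consequently $|v_{iy}|^q\lesssim K\varrho_0$ and $|v_{iy}|^{2(q-1)}\lesssim K^{\frac{2(q-1)}{q}}\varrho_0^{-\frac{2\alpha(q-1)}{q}}\lesssim K^{\frac{2(q-1)}{q}}\varrho_0$. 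The last step is exactly the hypothesis $\alpha<-\frac{q}{2(q-1)}$. With these bounds Young's inequality absorbs the heating terms, and Gronwall gives $E\equiv0$.
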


The proof of Proposition \ref{4dlg-P1} is standard and similar to the proof of Proposition 4.1 in \cite{Fang-Zang-2023}. Here, we deleted it in order to shorten the length of the paper. Now, we focus on the proof of Theorem \ref{1dlg-thm1}.

\textbf{Proof of Theorem \ref{1dlg-thm1}.} The uniqueness is a direct corollary of Proposition \ref{4dlg-P1}, and it remains to prove the existence. For any $\varepsilon\in(0,1)$, set $\varrho_{0\varepsilon}(y)=\varrho_{0}(y)+\varepsilon$ for $y\in\mathbb R.$ Clearly,  $\varepsilon\leqslant\varrho_{0\varepsilon}(y)\leqslant\overline\varrho+1$ for all $y\in{\mathbb R}.$ Consider the
following approximate system of (\ref{1dlg-E16})-(\ref{1dlg-E15})
\begin{equation}\label{4dlg-E13}
    \left\{
    \begin{array}{ll}
J_t=v_y, \\
J\varrho=J_0\varrho_{0\varepsilon},\\
\varrho_{0\varepsilon}v_t-\frac{1}{J_0}\left(|\frac{v_y}J|^{q-2}\frac{v_y}J\right)_y+R\frac{1}{J_0}(\varrho\Theta)_y=0, \\
\varrho_{0\varepsilon}\Theta_t-\frac{1}{J_0}\left(|\frac{\Theta_y}J|^{p-2}\frac{\Theta_y}J\right)_y+R\frac{1}{J_0}\varrho\Theta v_y=\frac{J}{J_0}\left|\frac{v_y}{J}\right|^q.
\end{array}
    \right.
    \end{equation}
According to Corollary \ref{3dlg-C1}, the system (\ref{4dlg-E13}) admits a strong solution $(J_\varepsilon,\varrho_\varepsilon,v_\varepsilon,\Theta_\varepsilon)$ on $(-r,r)\times[0,T_0]$ for some $T_0>0,$ where $T_0$ depends on $p,$ $q,$ $\underline J,$ $\overline J$ and certain norms of initial data, 
but independent of $\varepsilon$ and $R.$ Moreover, the solution $(J_\varepsilon,\varrho_\varepsilon,v_\varepsilon,\Theta_\varepsilon)$ has the following properties
\begin{eqnarray}
&&{\frac 3 4}\underline J\leqslant J_\varepsilon\leqslant{\frac 5 4}\overline J\mbox{ and }0<{\frac {4\underline J}{5\overline J}}\underline \varrho\leqslant \varrho_\varepsilon\leqslant{\frac {4\underline J}{3\overline J}}\overline J\mbox{ on }  (-r,r) \times [0,T_0],\label{4dlg-E14}\\
&&\sup\limits_{t\in[0,T_0]}\Big(\left\|\varrho_{0\varepsilon}^{\frac{\alpha+1}{2}}v\right\|_{L^2(-r,r)}^2+\left\|\varrho_{0\varepsilon}^{\alpha+1}\Theta\right\|_{L^1(-r,r)}
+\left\|\varrho_{0\varepsilon}^{\frac{\alpha+2}{2}}\Theta\right\|_{L^2(-r,r)}^2+\left\|\varrho_{0\varepsilon}^{\frac{\alpha+1}{2}}v_y\right\|_{L^2(-r,r)}^2\nonumber\\
&&+\left\|\varrho_{0\varepsilon}^{\frac{\alpha+1}{2}}\Theta_y\right\|_{L^2(-r,r)}^2
+\left\|\varrho_{0\varepsilon}^{\frac{\alpha}{q}}\left|\frac{v_y}{J}\right|J^{\frac1q}\right\|_{L^q(-r,r)}^q
+\left\|\varrho_{0\varepsilon}^{\frac{\alpha+2}{p}}\left|\frac{\Theta_y}{J}\right|J^{\frac1p}\right\|_{L^p(-r,r)}^p
+\left\|\varrho_{0\varepsilon}^{\frac{\alpha}{2}}J_y\right\|_{L^2(-r,r)}^2\Big)\nonumber\\
&&+\int_0^{T_0}\bigg(\left\|\varrho_{0\varepsilon}^{\frac{\alpha}{q}}\left|\frac{v_y}{J}\right|J^{\frac1q}\right\|_{L^q(-r,r)}^q
+\left\|\varrho_{0\varepsilon}^{\frac{\alpha+1}{p}}\left|\frac{\Theta_y}{J}\right|J^{\frac1p}\right\|_{L^p(-r,r)}^p
+\left\|\varrho_{0\varepsilon}^{\frac{\alpha}{2}}|\frac{v_y}{J}|^{\frac{q-2}{2}}\frac{v_{yy}}{\sqrt{J}}\right\|_{L^2(-r,r)}^2\nonumber\\
&&+\left\|\varrho_{0\varepsilon}^{\frac{\alpha}{2}}|\frac{\Theta_y}{J}|^{\frac{p-2}{2}}\frac{\Theta_{yy}}{\sqrt{J}}\right\|_{L^2(-r,r)}^2
+\left\|\varrho_{0\varepsilon}^{\frac{\alpha+1}{2}}v_t\right\|_{L^2(-r,r)}^2+\left\|\varrho_{0\varepsilon}^{\frac{\alpha+3}{2}}\Theta_t\right\|_{L^2(-r,r)}^2\bigg)ds
\leqslant C,\label{4dlg-E15}
    \end{eqnarray}
for any $0 < r < \infty$, where
\begin{equation}
C=\widetilde H(T_0)+\int_0^{T_0}\widetilde H^{\frac{2q} {q-1}}(s)\bigg(1-\frac{q+1}{q-1}\widetilde H^{\frac{q+1}{q-1}}(s)s\bigg)^{-\frac{2q}{q+1}}ds,
\notag
\end{equation}
with
\begin{equation*}
\begin{aligned}
\widetilde H(t)&=\|\varrho_0^{\frac {\alpha+2} 2}v_0\|_{L^2}^2+\|\varrho_0^{\alpha+1}\Theta_0\|_{L^1}+\|\varrho_0^{\frac {\alpha+2} 2}\Theta_0\|_{L^2}^2+\|\varrho_0^{\frac {\alpha+1} 2} v_0^{\prime}\|_{L^2}^2+\|\varrho_0^{\frac {\alpha+1} 2}\Theta_0^{\prime}\|_{L^2}^2\\
&+\|\varrho_0^{\frac \alpha q}|\frac { v_0^{\prime}} {J_0}|{J_0}^{\frac 1 q}\|_{L^q}^q+\|\varrho_0^{\frac {\alpha+2} p}|\frac {\Theta_0^{\prime}} {J_0}|{J_0}^{\frac 1 p}\|_{L^p}^p+\|\varrho_0^{\frac \alpha 2} J_0^{\prime}\|_{L^2}^2+\widetilde{G_0}t
\end{aligned}
 \end{equation*}
and
\begin{equation*}
\widetilde{G_0}=C{\overline J}^{12}\Big(\frac1{\underline J}\Big)^{\frac{12q}{2-q}}({\overline \varrho}+1)^{-\min\{\frac{(3q-2)\alpha+q}{2-q},\frac{8\alpha}{q-1}\}}
\left(\|(\varrho_0^\alpha)_y\|_{L^p}\|(\varrho_0^\alpha)_y\|_{L^\infty}\right)^{\frac{4q}{2-q}}+C({\overline \varrho}+1)^{-\frac{2\alpha+1}{2(q+1)}}.
 \end{equation*}
Using the Banach-Alaoglu theorem and the Cantors diagonal argument, we deduce
from the estimates (\ref{4dlg-E14})-(\ref{4dlg-E15}) that there is a subsequence $(J_\varepsilon,\varrho_\varepsilon,v_\varepsilon,\Theta_\varepsilon),$ still denoted
$(J_\varepsilon,\varrho_\varepsilon,v_\varepsilon,\Theta_\varepsilon),$ and $(J,\varrho,v,\Theta)$ such that
\begin{eqnarray*}
&J_\varepsilon-J_0\to J-J_0&weak\mbox-^*\ in\ L^{\infty}(0,T_0;W^{1,q}(-r,r))\cap L^{\infty}(0,T_0;W^{1,2}(-r,r)),\label{4dlg-E16}\\
&\partial_tJ_\varepsilon\to J_t& weak\mbox-^*\ in\ L^{\infty}(0,T_0;L^{q}(-r,r))\cap L^{\infty}(0,T_0;L^{2}(-r,r)),\label{4dlg-E17}\\
&\partial_tJ_\varepsilon\to J_t& weakly\ in\ L^{q}(0,T_0;W^{1,q}(-r,r)),\label{4dlg-E18}\\
&\partial_yv_\varepsilon\to v_y& weak\mbox-^*\ in\ L^{\infty}(0,T_0;L^{q}(-r,r))\cap L^{\infty}(0,T_0;L^{2}(-r,r)),\label{4dlg-E19}\\
&\partial_yv_\varepsilon\to v_y& weakly\ in\ L^{q}(0,T_0;W^{1,q}(-r,r)),\label{4dlg-E20}\\
&\partial_y\Theta_\varepsilon\to \Theta_y&weak\mbox-^*\ in\ L^{\infty}(0,T_0;L^{p}(-r,r))\cap L^{\infty}(0,T_0;L^{2}(-r,r)),\label{4dlg-E21}\\
&\partial_y\Theta_\varepsilon\to \Theta_y&weakly\ in\ L^{p}(0,T_0;W^{1,p}(-r,r)),\label{4dlg-E22}\\
&\partial_tv_\varepsilon\to v_t&weakly\ in\ L^{2}(0,T_0;L^{2}(-r,r)),\label{4dlg-E23}\\
&\partial_t\Theta_\varepsilon\to \Theta_t& weakly\ in\ L^{2}(0,T_0;L^{2}(-r,r))\label{4dlg-E24}
\end{eqnarray*}
and
\begin{eqnarray*}
&J_\varepsilon-J_0\to J-J_0&strongly\ in\ C([0,T_0]; L^{\infty}(-r,r)),\label{4dlg-E25}\\
&v_\varepsilon\to v&strongly\ in\ C([0,T_0]; L^{q}(-r,r))\cap L^{q}(0,T_0;W^{1,q}(-r,r)),\label{4dlg-E26}\\
&\Theta_\varepsilon\to \Theta& strongly\ in\ C([0,T_0]; L^{p}(-r,r))\cap L^{p}(0,T_0;W^{1,p}(-r,r))\label{4dlg-E27}
\end{eqnarray*}
for any $r\in(0,\infty)$. Thus, 
${\frac 3 4}\underline J\leqslant J\leqslant{\frac 5 4}\overline J$ and $\varrho\geqslant0$ on $(-r,r) \times [0,T_0].$ Since
\begin{equation}
(J_\varepsilon,\varrho_\varepsilon,v_\varepsilon,\Theta_\varepsilon)|_{t=0}=(J_0,\varrho_{0\varepsilon},v_0,\Theta_0)
\notag
\end{equation}
and $\varrho_{0\varepsilon}\to\varrho_0$ in $L^\infty(-r,r)$ for any $r\in(0,\infty)$, one obtains that
\begin{equation*}
(J,\varrho,v,\Theta)|_{t=0}=(J_0,\varrho_0,v_0,\Theta_0).
\end{equation*}
Due to the fact that ${\frac 3 4}\underline J\leqslant J_\varepsilon,J\leqslant{\frac 5 4}\overline J$ on $(-r,r) \times [0,T_0]$, one gets 
that $\frac{1}{J_\varepsilon}\to\frac 1 J$ strongly in $C([0,T_0]; L^{\infty}(-r,r))$ for any $R>0$. 
Moreover, all the nonlinear terms in (\ref{4dlg-E13}), that is, $\frac{\partial_yv_\varepsilon}{J_\varepsilon}$, $\frac{\partial_y\Theta_\varepsilon}{J_\varepsilon}$, $|\frac{\partial_yv_\varepsilon}{J_\varepsilon}|^{q-2}\frac{\partial_yv_\varepsilon}{J_\varepsilon}$, $|\frac{\partial_y\Theta_\varepsilon}{J_\varepsilon}|^{p-2}\frac{\partial_y\Theta_\varepsilon}{J_\varepsilon}$ and $\frac{(\partial_yv_\varepsilon)^2}{J_\varepsilon}$, converge strongly in $L^{q}((-r,r)\times(0,T_0))$, $L^{p}((-r,r)\times(0,T_0))$, $L^{\frac{q}{q-1}}((-r,r)\times(0,T_0))$, $L^{\frac{p}{p-1}}((-r,r)\times(0,T_0))$ and $L^{\frac{q}{2}}((-r,r)\times(0,T_0))$ to $\frac{\partial_yv}{J}$, $\frac{\partial_y\Theta}{J}$, $|\frac{\partial_yv}{J}|^{q-2}\frac{\partial_yv}{J}$, $|\frac{\partial_y\Theta}{J}|^{p-2}\frac{\partial_y\Theta}{J}$ and $\frac{(\partial_yv)^2}{J}$, respectively, for any fixed $r>0$. So, the quaternion $(J,\varrho,v,\Theta)$ satisfies the system (\ref{1dlg-E16})-(\ref{1dlg-E17}) in the sense of distribution. Furthermore, the quaternion $(J,\varrho,v,\Theta)$ satisfies the system (\ref{1dlg-E16})-(\ref{1dlg-E17}) a.e. on $(-r,r)\times [0,T_0]$. Thus, the quaternion $(J,\varrho,v,\Theta)$ is a solution to the problem (\ref{1dlg-E16})-(\ref{1dlg-E17}).

Now, we prove that the quaternion $(J,\varrho,v,\Theta)$ possesses the required regularities in Definition \ref{1dlg-D1}. Besides the regularities obtained by convergence, the other desired regularities of $(J,\varrho,v,\Theta)$ can be verified as follows. First, 
$(J-J_0,v,\Theta)\in C([0,T_0];L^{2}(-r,r))$ for any fixed $r\in(0,\infty).$ Next, according to the convergence $\sqrt\varrho_{0\varepsilon}\to\sqrt{\varrho_0}$ and $\frac{1}{\sqrt\varrho_{0\varepsilon}}\to\frac{1}{\sqrt{\varrho_0}}$ in $L^\infty(-r,r)$ for any fixed $r\in(0,\infty)$, we derive 
that
\begin{eqnarray*}
&\partial_yJ_\varepsilon\to J_y& weak\mbox-^*\ in\ L^{\infty}(0,T_0;L^{2}(-r,r)),\\
&\varrho_{0\varepsilon}^{\frac{\alpha}{2}}\partial_yJ_\varepsilon\to \varrho_{0}^{\frac{\alpha}{2}}J_y& weak\mbox-^*\ in\ L^{\infty}(0,T_0;L^{2}(-r,r)),\\
&\varrho_{0\varepsilon}^{\frac{\alpha+1}{2}}\partial_yv_\varepsilon\to \varrho_{0}^{\frac{\alpha+1}{2}}v_y& weak\mbox-^*\ in\ L^{\infty}(0,T_0;L^{2}(-r,r)),\\
&\varrho_{0\varepsilon}^{\frac{\alpha+1}{2}}\partial_y\Theta_\varepsilon\to \varrho_{0}^{\frac{\alpha+1}{2}}\Theta_y&weak\mbox-^*\ in\ L^{\infty}(0,T_0;L^{2}(-r,r)),\\
&\varrho_{0\varepsilon}^{\frac{\alpha+1}{2}}\partial_tv_\varepsilon\to \varrho_{0}^{\frac{\alpha+1}{2}}v_t& weakly\ in\ L^{2}(0,T_0;L^{2}(-r,r)),\\
&\varrho_{0\varepsilon}^{\frac{\alpha+3}{2}}\partial_t\Theta_\varepsilon\to \varrho_{0}^{\frac{\alpha+3}{2}}\Theta_t& weakly\ in\ L^{2}(0,T_0;L^{2}(-r,r)),\\
&\varrho_{0\varepsilon}^{\frac{\alpha}{q}}\Big|\frac{\partial_yv_\varepsilon}{J_\varepsilon}\Big|J_\varepsilon^{\frac1 q}\to \varrho_{0}^{\frac{\alpha}{q}}\Big|\frac{v_y}{J}\Big|J^{\frac1 q}& weak\mbox-^*\ in\ L^{\infty}(0,T_0;L^{q}(-r,r)),\\
&\varrho_{0\varepsilon}^{\frac{\alpha+2}{p}}\Big|\frac{\partial_y\Theta_\varepsilon}{J_\varepsilon}\Big|J_\varepsilon^{\frac1 p}\to \varrho_{0}^{\frac{\alpha+2}{p}}\Big|\frac{\Theta_y}{J}\Big|J^{\frac1 p}& weak\mbox-^*\ in\ L^{\infty}(0,T_0;L^{p}(-r,r)),\\
&\varrho_{0\varepsilon}^{\frac{\alpha}{2}}\Big|\frac{\partial_y v_\varepsilon}{J_\varepsilon}\Big|^{\frac{q-2}{2}}\frac{\partial_{yy}v_\varepsilon}{\sqrt{J_\varepsilon}}\to \varrho_{0}^{\frac{\alpha}{2}}\Big|\frac{v_y}{J}\Big|^{\frac{q-2}{2}}\frac{v_{yy}}{\sqrt J}& weakly\ in\ L^{2}(0,T_0;L^{2}(-r,r)),\\
&\varrho_{0\varepsilon}^{\frac{\alpha}{2}}\Big|\frac{\partial_y\Theta_\varepsilon}{J_\varepsilon}\Big|^{\frac{p-2}{2}}\frac{\partial_{yy}\Theta_\varepsilon}{\sqrt{J_\varepsilon}}\to \varrho_{0}^{\frac{\alpha}{2}}\Big|\frac{\Theta_y}{J}\Big|^{\frac{p-2}{2}}\frac{\Theta_{yy}}{\sqrt J}& weakly\ in\ L^{2}(0,T_0;L^{2}(-r,r))
   \end{eqnarray*}
for any $r\in(0,\infty)$.  Consequently, 
\begin{eqnarray*}
&\|\varrho_{0}^{\frac{\alpha}{2}}J_y\|_{L^{\infty}(0,T_0;L^{2}(-r,r))}\leqslant C,&\|\varrho_{0}^{\frac{\alpha+1}{2}}v_y\|_{L^{\infty}(0,T_0;L^{2}(-r,r))}\leqslant C,\\
&\|\varrho_{0}^{\frac{\alpha+1}{2}}\Theta_y\|_{L^{\infty}(0,T_0;L^{2}(-r,r))}\leqslant C,&\|\varrho_{0}^{\frac{\alpha+1}{2}}v_t\|_{L^{2}(0,T_0;L^{2}(-r,r))}\leqslant C,\\
&\|\varrho_{0}^{\frac{\alpha+3}{2}}\Theta_t\|_{L^{2}(0,T_0;L^{2}(-r,r))}\leqslant C,&\Big\|\varrho_{0}^{\frac{\alpha}{q}}\Big|\frac{v_y}{J}\Big|J^{\frac1 q}\Big\|_{L^{\infty}(0,T_0;L^{q}(-r,r))}\leqslant C,\\
&\Big\|\varrho_{0}^{\frac{\alpha+2}{p}}\Big|\frac{\Theta_y}{J}\Big|J^{\frac1 p}\Big\|_{L^{\infty}(0,T_0;L^{p}(-r,r))}\leqslant C,&\Big\|\varrho_{0}^{\frac{\alpha}{2}}\Big|\frac{v_y}{J}\Big|^{\frac{q-2}{2}}\frac{v_{yy}}{\sqrt J}\Big\|_{L^{2}(0,T_0;L^{2}(-r,r))}\leqslant C,\\
&\Big\|\varrho_{0}^{\frac{\alpha}{2}}\Big|\frac{\Theta_y}{J}\Big|^{\frac{p-2}{2}}\frac{\Theta_{yy}}{\sqrt J}\Big\|_{L^{2}(0,T_0;L^{2}(-r,r))}\leqslant C
    \end{eqnarray*}
for a positive constant $C$ independent of $r.$ Thus,
\begin{eqnarray*}
&\varrho_{0}^{\frac{\alpha}{2}}J_y\in L^{\infty}(0,T_0;L^{2}),\quad\varrho_{0}^{\frac{\alpha+1}{2}}v_y\in L^{\infty}(0,T_0;L^{2}),\quad\varrho_{0}^{\frac{\alpha+1}{2}}\Theta_y\in L^{\infty}(0,T_0;L^{2}),\\
&\varrho_{0}^{\frac{\alpha+1}{2}}v_t\in L^{2}(0,T_0;L^{2}),
\quad\varrho_{0}^{\frac{\alpha+3}{2}}\Theta_t\in L^{2}(0,T_0;L^{2}),\\
&\varrho_{0}^{\frac{\alpha}{q}}\Big|\frac{v_y}{J}\Big|J^{\frac1 q}\in L^{\infty}(0,T_0;L^{q}),\quad\varrho_{0}^{\frac{\alpha+2}{p}}\Big|\frac{\Theta_y}{J}\Big|J^{\frac1 p}\in L^{\infty}(0,T_0;L^{p}),\\
&\varrho_{0}^{\frac{\alpha}{2}}\Big|\frac{v_y}{J}\Big|^{\frac{q-2}{2}}\frac{v_{yy}}{\sqrt J}\in L^{2}(0,T_0;L^{2}),\quad\varrho_{0}^{\frac{\alpha}{2}}\Big|\frac{\Theta_y}{J}\Big|^{\frac{p-2}{2}}\frac{\Theta_{yy}}{\sqrt J}\in L^{2}(0,T_0;L^{2}).
\end{eqnarray*}
Furthermore,
\begin{equation}
\varrho_{0}^{\frac{\alpha+1}{2}}v\in C([0,T_0];L^{2}),\quad\varrho_{0}^{\frac{\alpha+2}{2}}\Theta\in C([0,T_0];L^{2}).
\notag
    \end{equation}
Based on the above discussion, the quaternion $(J,\varrho,v,\Theta)$ meets all regularities mentioned in Definition \ref{1dlg-thm1}. The proof is completed.

\section{ Global existence in the presence of far field vacuum}\label{5dlg-S5}

In this section, we concentrate on proving global existence to the problem (\ref{1dlg-E16})-(\ref{1dlg-E17}) in the presence of vacuum at far field. We start with the condition [H1]-[H5] on initial data $(J_0,\varrho_0,v_0,\Theta_0),$ that is,
\begin{eqnarray}
&&0\leqslant\varrho_0\leqslant\overline\varrho\ on\ {\mathbb R}\ with \ \overline{\varrho}\geqslant1,\label{5dlg-E1}\\
&&0<\underline J\leqslant J_0\leqslant\overline J\ on\ \mathbb R,\varrho_0^{\frac{\alpha}{ 2}}J_0^{\prime}\in L^2,\quad \varrho_0^{\alpha}J_0^{\prime}\in L^1,\label{5dlg-E2}\\
&&\varrho_0^{\frac{\alpha+1}{2}}v_0\in L^2,\quad\varrho_0^{\frac{\alpha+1}{2}}v_0^{\prime}\in L^2,\quad\varrho_0^{\frac{\alpha}{q}}\big|\frac{v_0^{\prime}}{J_0}\big|\in L^q,\label{5dlg-E3}\\
&&\varrho_0^{\frac{\alpha+2}{2}}\Theta_0\in L^2,\quad\varrho_0^{\frac{\alpha+1}{2}}\Theta_0^{\prime}\in L^2,\quad\varrho_0^{\alpha+1}\Theta_0\in L^1,\quad\varrho_0^{\frac{\alpha+2}{p}}\big|\frac{\Theta_0^{\prime}}{J_0}\big|\in L^p,\label{5dlg-E4}\\
&&(\varrho_0^\alpha)_y\in L^p\cap L^\infty ,\quad\varrho_0(y)\geqslant \frac {A_0} {(1+|y|)^{l}}\quad (\forall y \in \mathbb R),\label{5dlg-E5}
    \end{eqnarray}
where $\alpha<\min\{-\frac{q}{2(q-1)},-\frac{4-p}{2-p}\}$ and $l\in(0,\min\{1,-\frac{3p-2}{(2-p)\alpha+3},-\frac{p-1}{\alpha p}\}).$
Although the initial $J_0$ is not the identical one, the initial $J_0$ is bounded from blow. Taking similar calculation in Proposition \ref{3dlg-P3.2} and Proposition \ref{3dlg-P3.3}, one drive the following proposition.

\begin{proposition}\label{5dlg-P5.1}
Let $p,q\in(1,2)$ and $\alpha<\min\{-\frac{q}{2(q-1)},-\frac{4-p}{2-p}\}.$ For a given positive time $T,$ let the quaternion
$(J,\varrho,v,\Theta)$ be a strong solution to the problem (\ref{1dlg-E16})-(\ref{1dlg-E17}) on $\mathbb R \times [0,T]$ with $(J_0,\varrho_0,v_0,\Theta_0)$ satisfying (\ref{5dlg-E1})-(\ref{5dlg-E5}). Then
\begin{eqnarray}
&&\int\varrho_0^{\alpha}\Big({\varrho_0}v^2+{\varrho_0}\Theta+\varrho_0^2\Theta^2+{\varrho_0}v_y^2
+{\varrho_0}\Theta_y^2+\frac{1}{J_0}|\frac{v_y}{J}|^qJ+\frac{\varrho_0^2}{J_0}|\frac{\Theta_y} J|^pJ+ J_y^2\Big)dy\nonumber\\
&&\leqslant H_0(t)\bigg(1-\frac{q+1}{q-1}H_0^{\frac{q+1}{q-1}}(t)t\bigg)^{-\frac{q-1}{q+1}},\label{5dlg-E6}\\
&&\sup\limits_{y\in\mathbb R}J(y,t)\leqslant2M_0(t)\exp\bigg((\sup\limits_{y\in\mathbb R}\varrho_0)^{-\frac{\alpha}{q}}\frac{(q-1)t}{q}\bigg),\label{5dlg-E7}\\
&&\inf\limits_{y\in\mathbb R}J(y,t)\geqslant(\inf\limits_{y\in\mathbb R}J_0)\bigg(1+C(\inf\limits_{y\in\mathbb R}J_0)\Big[q(M_0(t)-\sup\limits_{y\in\mathbb R}J_0)\Big]^{\frac1 q}\Big(\int_0^tF_0(s)ds\Big)^{\frac{q-1} q}\bigg)^{-1},\label{5dlg-E8}\\
&&\varrho(y,t)\leqslant\varrho_0(\sup\limits_{y\in\mathbb R}J_0)(\inf\limits_{y\in\mathbb R}J_0)^{-1}(1+C(\inf\limits_{y\in\mathbb R}J_0)[q(M_0(t)-\sup\limits_{y\in\mathbb R}J_0)]^{\frac1 q}(\int_0^tF_0(s)ds)^{\frac{q-1} q}),\label{5dlg-E9}\\
&&\varrho(y,t)\geqslant\frac{1}{2}\varrho_0(\inf\limits_{y\in\mathbb R}J_0)M^{-1}_0(t)\exp\bigg(-(\sup\limits_{y\in\mathbb R}\varrho_0)^{-\frac{\alpha}{q}}\frac{(q-1)t}{q}\bigg)\label{5dlg-E10}
    \end{eqnarray}
for all $t\in[0,T]$, where
\begin{eqnarray*}
&&H_0=\Big(\|\varrho_0^{\frac {\alpha+2} 2}v_0\|_{L^2}^2+\|\varrho_0^{\alpha+1}\Theta_0\|_{L^1}+\|\varrho_0^{\frac {\alpha+2} 2}\Theta_0\|_{L^2}^2+\|\varrho_0^{\frac {\alpha+1} 2} v_0^{\prime}\|_{L^2}^2+\|\varrho_0^{\frac {\alpha+1} 2}\Theta_0^{\prime}\|_{L^2}^2\\
&&~~~~+\|\varrho_0^{\frac \alpha q}|\frac { v_0^{\prime}} {J_0}|{J_0}^{\frac 1 q}\|_{L^q}^q+\|\varrho_0^{\frac {\alpha+2} p}|\frac {\Theta_0^{\prime}} {J_0}|{J_0}^{\frac 1 p}\|_{L^p}^p+\|\varrho_0^{\frac \alpha 2} J_0^{\prime}\|_{L^2}^2\Big)+G_0t\\
&&G_0=C{\overline J}^{12}\Big(\frac1{\underline J}\Big)^{\frac{12q}{2-q}}{\overline \varrho}^{-\min\{\frac{(3q-2)\alpha+q}{2-q},\frac{10\alpha}{q-1}\}}
\left(\|(\varrho_0^\alpha)_y\|_{L^p}\|(\varrho_0^\alpha)_y\|_{L^\infty}\right)^{\frac{16q}{2-q}}+C\overline{\varrho}^{-\frac{2\alpha+1}{2(q+1)}},\\
&&M_0(t)=\frac{C}{q}\bigg(q\sup\limits_{y\in\mathbb R}J_0+H_0(t)+\int_0^tH_0^{\frac{2q}{q-1}}(s)\Big(1-\frac{q+1}{q-1}H_0^{\frac{q+1}{q-1}}(s)s\Big)^{-\frac{2q}{q-1}}ds\bigg),\\
&&F_0(t)=({\sup\limits_{y\in\mathbb R}\varrho_0})^{-\frac{\alpha}{q-1}}M_0^{-\frac{q+1}{q-1}}(t)\exp\bigg(-\frac{q+1}{q}({\sup\limits_{y\in\mathbb R}\varrho_0})^{-\frac{\alpha}{q}}t\bigg)
 \end{eqnarray*}
on $[0,T].$
\end{proposition}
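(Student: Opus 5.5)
The plan is to repeat the energy argument of Proposition \ref{3dlg-P3.2} and the $J$-bounds of Proposition \ref{3dlg-P3.3}, now keeping track of the nonconstant $J_0$. The density estimates (\ref{5dlg-E9})--(\ref{5dlg-E10}) then follow algebraically from the identity $J\varrho=J_0\varrho_0$ in (\ref{1dlg-E16}).

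\textbf{Step 1 (weighted energy inequality).} Multiply (\ref{1dlg-E14}) by $J_0\varrho_0^{\alpha}v$, $J_0\varrho_0^{\alpha}v_t$ and (after differentiating in $y$) by $J_0\varrho_0^\alpha v_y$. Multiply (\ref{1dlg-E15}) by $\tfrac12 J_0\varrho_0^\alpha$, $J_0\varrho_0^{\alpha+1}\Theta$, $J_0\varrho_0^{\alpha+2}\Theta_t$ and (after differentiating) by $J_0\varrho_0^\alpha\Theta_y$. Then differentiate (\ref{1dlg-E13}) and test against $\varrho_0^\alpha J_y$. The weight $J_0$ cancels the factor $\frac1{J_0}$ in front of the diffusion terms, so the dissipative structure is exactly that of Steps 1--7. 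The only new terms are the commutators produced when $y$-derivatives fall on $J_0$, or when $\frac1{J_0}$ multiplies $\varrho_0 v_t$ after differentiation. These are controlled by $\underline J\le J_0\le\overline J$ together with $\varrho_0^{\alpha/2}J_0'\in L^2$, which is already part of $H_0$. Every error term is then estimated exactly as $I_i,\mathbb A_i,\dots,\mathbb H_i$ were. We use Lemma \ref{2dlg-L2.1} for $\|v_y\|_{L^\infty}$, $\|\Theta_y\|_{L^\infty}$ and $\|\Theta\|_{L^\infty}$, and Young's inequality to split each term into a power $(\cdot)^{2q/(q-1)}$ of the energy, a small multiple of the dissipation, and a constant absorbed into $G_0$. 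This yields the differential inequality (\ref{3dlg-E20}) for the energy with $\frac1{J_0}$-weights. Lemma \ref{2dlg-L2.2} with $\sigma=\frac{q+1}{q-1}$ then gives (\ref{5dlg-E6}), together with the time-integrated dissipation bound analogous to (\ref{3dlg-E23}).

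\textbf{Step 2 (bounds on $J$).} Since $J_t=v_y$, we have $\sup J\le \sup J_0+\int_0^t\|v_y\|_{L^\infty}ds$. Lemma \ref{2dlg-L2.1} bounds $\|v_y\|_{L^\infty}$ by the product of $(\int\varrho_0^\alpha|v_y/J|^qJ)^{1/2q}$, $(\int\varrho_0^\alpha|v_y/J|^{q-2}v_{yy}^2/J)^{1/2q}$ and $(\sup J)^{(q-1)/q}(\sup\varrho_0)^{-\alpha/q}$. Hölder in time, Young's inequality and Gronwall's inequality then give (\ref{5dlg-E7}), with $M_0$ built from the integrated dissipation bound of Step 1. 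For the lower bound, we write $(1/J)_t=-v_y/J^2$. The same $L^\infty$ bound on $v_y$, now with the sharper weight coming from (\ref{5dlg-E7}), gives $1/J\le 1/J_0+C[q(M_0-\sup J_0)]^{1/q}(\int_0^tF_0)^{(q-1)/q}$, and inverting this yields (\ref{5dlg-E8}).

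\textbf{Step 3 (bounds on $\varrho$).} Since $\varrho=J_0\varrho_0/J$, the upper bound (\ref{5dlg-E9}) follows from $J_0\le\sup J_0$ and the lower bound (\ref{5dlg-E8}) on $J$. The lower bound (\ref{5dlg-E10}) follows from $J_0\ge\inf J_0$ and (\ref{5dlg-E7}).

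The main obstacle is Step 1. We must check that every commutator involving $J_0'$, especially those arising from $(\frac1{J_0}(|v_y/J|^{q-2}v_y/J)_y)_y$ and $(\frac1{J_0}(\varrho\Theta)_y)_y$, can be absorbed with the same exponents $2q/(q-1)$, so that the constant stays of the form $G_0$. We must also confirm that the estimate holds on all of $[0,T]$ rather than only on a short interval. Here I would argue that $T$ lies within the range where $\frac{q+1}{q-1}H_0^{\frac{q+1}{q-1}}t<1$, as is implicitly required by the statement, and otherwise apply the argument on subintervals.
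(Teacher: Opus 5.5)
Your plan is the paper's plan: the paper proves this proposition only by invoking the computations of Propositions \ref{3dlg-P3.2}--\ref{3dlg-P3.3}. Step 3 is also fine once (\ref{5dlg-E7})--(\ref{5dlg-E8}) are available. The genuine gap is the lower bound for $J$ in Step 2.

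From $(1/J)_t=-v_y/J^2$ you get $(1/J)_t\le\|v_y\|_{L^\infty}J^{-2}$. But $J\le\sup_yJ$ gives $J^{-2}\ge(\sup_yJ)^{-2}$, not $\le$. Moreover (\ref{5dlg-E7}) is an \emph{upper} bound on $\sup_yJ$, hence a \emph{lower} bound on $(\sup_yJ)^{-2}$. So the ``sharper weight coming from (\ref{5dlg-E7})'' is not available, because both inequalities go the wrong way; it is exactly what produces the factor $M_0^{-\frac{q+1}{q-1}}$ and the decaying exponential in $F_0$. The proof of Proposition \ref{3dlg-P3.3} contains the same two reversed inequalities, so copying it does not help.

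The correct weight is $(\inf_yJ)^{-2}$. It only gives the Riccati inequality $\frac{d}{dt}\|J^{-1}\|_{L^\infty}\le\|v_y\|_{L^\infty}\|J^{-1}\|_{L^\infty}^2$, i.e. $\inf_yJ(\cdot,t)\ge\inf J_0-\int_0^t\|v_y\|_{L^\infty}ds$. This carries no information once $\int_0^t\|v_y\|_{L^\infty}ds$ reaches $\inf J_0$. It therefore gives a short-time bound of the kind in Remark \ref{3dlf-R1}, not (\ref{5dlg-E8}) on an arbitrary $[0,T]$, and (\ref{5dlg-E9}) falls with it.

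The same missing control on $J$ undermines Step 1 on all of $[0,T]$. The factors $1/\underline J$ and $\overline J$ in the estimates of Proposition \ref{3dlg-P3.2} bound $1/J$ and $J$, not $J_0$. They were legitimate there only because $\frac34\underline J\le J\le\frac54\overline J$ on a short interval. Bounding the new $J_0'$-commutators by $\underline J\le J_0\le\overline J$ does not remove the need for bounds on $\inf_yJ(\cdot,t)$ and $\sup_yJ(\cdot,t)$, and Step 2 cannot supply them without circularity. This is why Proposition \ref{5dlg-P5.2} keeps $\delta(t)^{-\frac{24}{2-q}}$, with $\delta(t)=\inf_yJ(y,t)$, in its constant rather than a fixed $G_0$. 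Restricting to subintervals where the Gronwall bound is finite does not address this.

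To close the argument you need one of two things:
\begin{itemize}
\item a continuity argument confined to a time interval on which $\int_0^t\|v_y\|_{L^\infty}ds\le\frac14\inf J_0$; or
\item a genuinely different mechanism for a lower bound on $J$ (equivalently an upper bound on $\varrho$) that does not rest on $J_t=v_y$ alone.
\end{itemize}

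Two smaller points. With far-field vacuum, integrating by parts against $\varrho_0^\alpha$, which blows up at infinity, requires the cut-off $\xi_r$ and (H5) to dispose of the $\xi_r'$ terms, as in Proposition \ref{5dlg-P5.2}. Also, your $J_0$-weighted multipliers yield an energy that is equivalent to, but not the same as, the one in (\ref{5dlg-E6}).
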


Next, the energy inequality of strong solution to the problem (\ref{1dlg-E16})-(\ref{1dlg-E17}) is derived in the following proposition. The proof of Proposition \ref{5dlg-P5.2} focuses on the initial date $J_0$ but not on the lower bound of $J_0.$ This requires more detailed calculations, compared to the proof of Proposition 5.2 in \cite{Fang-Zang-2023}.

\begin{proposition}\label{5dlg-P5.2}
Let $p,q\in(1,2)$ and $\alpha<\min\{-\frac{q}{2(q-1)},-\frac{4-p}{2-p}\}.$ For a given positive time $T,$ let the quaternion
$(J,\varrho,v,\Theta)$ be a strong solution to the problem (\ref{1dlg-E16})-(\ref{1dlg-E17}) on $\mathbb R \times [0,T]$ with $(J_0,\varrho_0,v_0,\Theta_0)$ satisfying (\ref{5dlg-E1})-(\ref{5dlg-E5}) and set $\delta(t)=\inf\limits_{y\in\mathbb R}J(y,t)$ on $[0,T].$ Then
\begin{eqnarray}\label{5dlg-E11}
&&\frac d {dt}\bigg(\int\varrho_0^{\alpha}\Big({\varrho_0}v^2+{\varrho_0}\Theta+\varrho_0^2\Theta^2+{\varrho_0}v_y^2
+{\varrho_0}\Theta_y^2+\frac{1}{J_0}|\frac{v_y}{J}|^qJ+\frac{\varrho_0^2}{J_0}|\frac{\Theta_y} J|^pJ+ J_y^2\Big)dy\bigg)(t)\nonumber\\
&&+\int\varrho_0^\alpha\Big(\frac{1}{J_0}|\frac{v_y}{J}|^qJ+\frac{\varrho_0}{J_0}|\frac{\Theta_y} J|^pJ+\frac{1}{J_0}|\frac{v_y}{J}|^{q-2}\frac{ v_{yy}^2}J+\frac{1}{J_0}|\frac{\Theta_y} J|^{p-2}\frac {\Theta_{yy}^2}J+\varrho_0^3\Theta_t^2+{\varrho_0}v_t^2\Big)dy\nonumber\\
&&\leqslant\bigg(\int\varrho_0^{\alpha}\Big({\varrho_0}v^2+{\varrho_0}\Theta+\varrho_0^2\Theta^2+{\varrho_0}v_y^2
+{\varrho_0}\Theta_y^2+\frac{1}{J_0}|\frac{v_y}{J}|^qJ+\frac{\varrho_0^2}{J_0}|\frac{\Theta_y} J|^pJ+ J_y^2\Big)dy\bigg)^{\frac{2q}{q-1}}\nonumber\\
&&+C\left(\delta(t)\right)^{-\frac{24}{2-q}}({\sup\limits_{y\in\mathbb R}\varrho_0})^{-\min\{\frac{(3q-2)\alpha+q}{2-q},\frac{10\alpha}{q-1}\}}
\left(\|(\varrho_0^\alpha)_y\|_{L^p}\|(\varrho_0^\alpha)_y\|_{L^\infty}\right)^{\frac{16q}{2-q}}
\end{eqnarray}
holds for any $t\in[0,T]$, where the positive constant $C$ just depends on $p$, $q$ and $A_0$.
\end{proposition}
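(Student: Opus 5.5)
The plan is to redo the seven energy steps of Proposition \ref{3dlg-P3.2}, now with a general $J_0$ and with every occurrence of $\underline J$ (the lower bound for $J$ on the short interval of Proposition \ref{3dlg-P3.3}) replaced by the instantaneous quantity $\delta(t)=\inf_y J(y,t)$. Upper bounds for $J$ are not needed in explicit form: in each step, the factors of $\overline J$ that appeared in Section \ref{3dlg-S3} either come from $J$ in the numerator, and can be absorbed into the weighted quantities $|v_y/J|^qJ$, $|\Theta_y/J|^pJ$, or are bounded through $\delta(t)^{-1}$ after rewriting $J/J_0$ in terms of $J_0\varrho_0=J\varrho\le J\overline\varrho$.

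Concretely, I would multiply (\ref{1dlg-E14}) and (\ref{1dlg-E15}) by $\varrho_0^\alpha v$ and $\tfrac12\varrho_0^\alpha$ (Step 1); (\ref{1dlg-E15}) by $\varrho_0^{\alpha+1}\Theta$ (Step 2); the $y$-derivatives of (\ref{1dlg-E14}) and (\ref{1dlg-E15}) by $\varrho_0^\alpha v_y$ and $\varrho_0^\alpha\Theta_y$ (Steps 3--4); (\ref{1dlg-E14}) and (\ref{1dlg-E15}) by $\varrho_0^\alpha v_t$ and $\varrho_0^{\alpha+2}\Theta_t$ (Steps 5--6); and $(J_t)_y=v_{yy}$ by $\varrho_0^\alpha J_y$ (Step 7). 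Because of the factor $1/J_0$ in front of the flux terms, the multipliers produce exactly the weights $\frac{1}{J_0}|v_y/J|^qJ$, $\frac{\varrho_0^2}{J_0}|\Theta_y/J|^pJ$ and $\frac{1}{J_0}|v_y/J|^{q-2}v_{yy}^2/J$ appearing in (\ref{5dlg-E11}). The integration by parts of the flux terms now generates extra commutator terms containing $(1/J_0)_y=-J_0'/J_0^2$. These will be controlled using $\varrho_0^{\alpha/2}J_0'\in L^2$ and $\varrho_0^\alpha J_0'\in L^1$ from (\ref{5dlg-E2}), together with $J_0=J\varrho/\varrho_0$, and handled exactly like the $J_y$ terms $\mathbb B_3,\mathbb B_4,\mathbb C_3,\mathbb C_4$.

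Each right-hand-side term is then estimated by the same chain as before. First comes Hölder with the weights $\varrho_0^{\alpha}$, $(\varrho_0^\alpha)_y\in L^p\cap L^\infty$ (interpolated to the intermediate $L^r$ spaces). Next, Lemma \ref{2dlg-L2.1} gives the $L^\infty$ bounds of $v_y$ and $\Theta_y$ in terms of $\int\varrho_0^{\alpha+1}v_y^2$ and $\int\varrho_0^\alpha|v_y/J|^{q-2}v_{yy}^2/J$, where $\sup\varrho_0^{-\alpha}$ appears because $\alpha<0$. Finally, Young's inequality sends every product into three pieces: the $\frac{2q}{q-1}$ power of the total energy, an $\epsilon$ or $\eta$ multiple of the dissipation (absorbed on the left), and a constant depending on $\delta(t)$, $\sup\varrho_0$ and $\|(\varrho_0^\alpha)_y\|$. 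Adding the seven inequalities gives (\ref{5dlg-E11}). The $\delta$-dependent constants are collected into a single power $\delta(t)^{-24/(2-q)}$: every exponent of $1/\underline J$ in (\ref{3dlg-E9})--(\ref{3dlg-E19}) is at most $\max\{12q/(2-q),16\}\le 24/(2-q)$, and since $\delta(t)\le J$ can be normalized (if $\delta\ge1$, the factor is bounded by a constant), the largest power dominates. The constant $C$ depends on $A_0$ only through (H5), which is used to convert $\varrho_0^\alpha$ weights on $J_0'$ into polynomial weights.

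The main obstacle is the new commutator terms involving $J_0'$ combined with the absence of an upper bound on $J$. In particular, terms like $\int\varrho_0^\alpha|v_y/J|^{q-2}\frac{v_y}{J}\,(1/J_0)_y\,v_{yy}$ must be closed without producing $\sup J$. I would first try to write $|J_0'|/J_0\le |J_0'|\,\varrho/(\delta\,\varrho_0)\cdot\sup J_0$-free bounds, using $J_0\ge\underline J$ only through (\ref{5dlg-E2}) once and then keeping only $\delta(t)$. If that fails, I would bound $\int\varrho_0^\alpha J_0'^2$ by the energy term $\int\varrho_0^\alpha J_y^2$ at time $0$ and treat it as part of the constant.
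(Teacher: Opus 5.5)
You choose the same multipliers as the paper's Steps 1--7 and the same H\"older/Lemma~\ref{2dlg-L2.1}/Young chain. There is, however, a genuine gap: you integrate by parts over all of $\mathbb R$ exactly as in Proposition~\ref{3dlg-P3.2}, and under (\ref{5dlg-E1})--(\ref{5dlg-E5}) this is not justified.

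In Section~\ref{3dlg-S3} the density is bounded below and the solution lies in unweighted Sobolev spaces, so boundary terms at $\pm\infty$ vanish. Here $\varrho_0\to0$ at infinity, so the weight $\varrho_0^\alpha$ ($\alpha<0$) blows up there. The regularity in Definition~\ref{1dlg-D1} does not force boundary terms such as $\frac{\varrho_0^\alpha}{J_0}|\frac{v_y}{J}|^{q-2}\frac{v_y}{J}v$ to vanish as $|y|\to\infty$. H\"older against $\varrho_0^\alpha|\frac{v_y}{J}|^qJ$ and $\varrho_0^{\alpha+1}v^2$ leaves a factor $\varrho_0^{\frac{(2-q)\alpha-q}{2q}}$, which is unbounded at infinity, so these terms are not even known to be integrable.

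This is why the paper multiplies by $\varrho_0^\alpha v\,\xi_r^3$, $\varrho_0^\alpha v_y\,\xi_r^2$, $\varrho_0^\alpha v_t\,\xi_r^3$, and so on, with $\xi_r=\xi(y/r)$. It must then estimate the extra cutoff terms $\Pi_2,\Pi_4,\Pi_6,\mathbb G_4,\mathbb J_7$--$\mathbb J_9,\mathbb W_9$--$\mathbb W_{12},\mathbb V_5,\mathbb S_6$. These produce quantities like $\int\varrho_0^{\alpha-p+1}|\xi_r'|^p\,dy$ or $\int\varrho_0^{\frac{2\alpha-p\alpha+2}{2-p}}|\xi_r'|^{\frac{2p}{2-p}}\,dy$. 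Because of the constraint on $\alpha$, these are negative powers of $\varrho_0$ integrated over $r\le|y|\le 2r$. They can be discarded as $r\to\infty$ only because $\varrho_0\ge A_0(1+|y|)^{-l}$ gives $|\xi_r'|\le C\varrho_0^{1/l}$ for $|y|\ge1$, with $l$ restricted as in (\ref{5dlg-E27}).

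That is where (H5) and the $A_0$-dependence of $C$ actually come from. Your stated use of (H5), turning the weights on $J_0'$ into polynomial ones, is not needed anywhere, since $\varrho_0^{\alpha/2}J_0'\in L^2$ is already part of (\ref{5dlg-E2}). In your scheme $A_0$ never genuinely enters, which is the symptom of the missing step. You need either the cutoff argument or some other justification of the whole-line energy identities, such as passing to the limit from non-vacuum approximations.

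A smaller point: upper bounds on $J$ cannot be dispensed with. Lemma~\ref{2dlg-L2.1} controls $\|v_y\|_{L^\infty}$ by $\int|v_y|^q$ and $\int|v_y|^{q-2}v_{yy}^2$. Converting these into $\int\varrho_0^\alpha|\frac{v_y}{J}|^qJ$ and $\int\varrho_0^\alpha|\frac{v_y}{J}|^{q-2}\frac{v_{yy}^2}{J}$ costs a factor $(\sup_y J)^{q-1}$, which cannot be absorbed. Your rewriting $J_0\varrho_0=J\varrho\le J\overline\varrho$ gives a lower bound on $J$, not an upper bound. The paper takes $\sup_yJ$ from (\ref{5dlg-E7}) of Proposition~\ref{5dlg-P5.1} when passing from (\ref{5dlg-E26-1}) to (\ref{5dlg-E11}).
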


\begin{proof}
Choose a function $\xi\in C_c^{\infty}$((-2,2)) such that
$$\xi\equiv1\mbox{ on }(-1,1)\mbox{ and }0\leqslant\xi\leqslant1\mbox{ on }(-2,2).$$
For each $r\geqslant1$, we set $\xi_r(y)=\xi(\frac{y}{r})$ for $y\in\mathbb R$.

To derive $(\ref{5dlg-E11})$, we need to complete the following several steps, using similar estimate in Proposition \ref{3dlg-P3.2}.

{\sl\textit{Step 1.}} Multiplying equation (\ref{1dlg-E14}) and (\ref{1dlg-E15}) by $\varrho_0^\alpha v\xi_r^3$ and $\frac12\varrho_0^\alpha\xi_r^3$ respectively, adding the two equations and integrating the resultant over $\mathbb R$, one obtains that
\begin{eqnarray}\label{5dlg-E12}
&&\frac1 2\frac d {dt}\int\Big(\varrho_0^{\alpha+1}v^2+\varrho_0^{\alpha+1}\Theta\Big)\xi_r^3dy+\int\frac{\varrho_0^\alpha}{J_0}|\frac{v_y}{J}|^qJ\xi_r^3dy\nonumber\\
&&=-\int|\frac{v_y}{J}|^{q-2}\frac {v_y}J(\frac{\varrho_0^\alpha}{J_0})_yv\xi_r^3dy-3\int\frac{\varrho_0^\alpha}{J_0}|\frac{v_y}{J}|^{q-2}\frac {v_y}Jv\xi_r^2\xi_r^{\prime}dy+\int R\varrho\Theta(\frac{\varrho_0^\alpha}{J_0})_yv\xi_r^3dy\nonumber\\
&&+3\int R\varrho\Theta\frac{\varrho_0^\alpha}{J_0} v\xi_r^2\xi_r^{\prime}dy-\frac12\int|\frac{\Theta_y} J|^{p-2}\frac {\Theta_y}J(\frac{\varrho_0^\alpha}{J_0})_y\xi_r^3dy-\frac{3}{2}\int\frac{\varrho_0^\alpha}{J_0}|\frac{\Theta_y} J|^{p-2}\frac {\Theta_y}J\xi_r^2\xi_r^{\prime}dy
\nonumber\\
&&+\frac12\int R\frac{\varrho_0^\alpha}{J_0}\varrho\Theta v_y\xi_r^3dy:=\sum_{i=1}^7 \Pi_i.
\end{eqnarray}
The term $\Pi_1$-$\Pi_7$ on the right hand of (\ref{5dlg-E12}) is estimated as follows
\begin{eqnarray*}
&|\Pi_1|&=\Big|-\int|\frac{v_y}{J}|^{q-2}\frac {v_y}J(\frac{\varrho_0^\alpha}{J_0})_yv\xi_r^3dy\Big|\\
&&\leqslant \Big(\int\varrho_0^{\alpha+1}v^2dy+\int\varrho_0^{\alpha+1}v_y^2dy\Big)^{\frac{2q}{q-1}}
+\epsilon\int\frac{\varrho_0^\alpha}{J_0}|\frac{v_y}{J}|^qJ\xi_r^3dy\\
&&+C(\epsilon)\left(\|(\varrho_0^\alpha)_y\|_{L^p}\|(\varrho_0^\alpha)_y\|_{L^\infty}(\frac{1}{\underline{J}})^{\frac{1}{q}} (\sup\limits_{y\in\mathbb R}\varrho_0)^{-\frac{(3q-2)\alpha+q}{2q}}\right)^\frac{4q}{5-q}\\
&& +C(\epsilon)(\|\varrho_0^{\frac{\alpha}{2}}J_{0y}\|_{L^2}
\|(\varrho_0^\alpha)_y\|_{L^\infty}\sup\limits_{y\in\mathbb R}\varrho_0)^{\frac{4q}{q+1}}\\
&&\leqslant \Big(\int\varrho_0^{\alpha+1}v^2dy+\int\varrho_0^{\alpha+1}v_y^2dy\Big)^{\frac{2q}{q-1}}
+\epsilon\int\frac{\varrho_0^\alpha}{J_0}|\frac{v_y}{J}|^qJ\xi_r^3dy\\
&&+C(\epsilon)\left(\|\varrho_0^{\frac{\alpha}{2}}J_{0y}\|_{L^2}\|(\varrho_0^\alpha)_y\|_{L^p}\|(\varrho_0^\alpha)_y\|_{L^\infty}(\frac{1}{\underline{J}})^{\frac{1}{q}} (\sup\limits_{y\in\mathbb R}\varrho_0)^{-\frac{(3q-2)\alpha+q}{2q}}\right)^\frac{4q}{5-q},\\
&|\Pi_2|&=\Big|-3\int\frac{\varrho_0^\alpha}{J_0}|\frac{v_y}{J}|^{q-2}\frac {v_y}Jv\xi_r^2\xi_r^{\prime}dy\Big|\\
&&\leqslant \Big(\int\varrho_0^{\alpha+1}v^2dy\Big)^{\frac{2q}{q-1}}+\epsilon\int\frac{\varrho_0^\alpha}{J_0}|\frac{v_y}{J}|^qJ\xi_r^3dy
+C(\epsilon)\left(\int\varrho_0^{-\frac{q+(q-2)\alpha}{2(2-q)}}|\xi_r^\prime|^{\frac{q}{2-q}}dy\right)^{\frac{4(2-q)}{5-q}},\\
&|\Pi_3|&=\Big|\int R\varrho\Theta(\frac{\varrho_0^\alpha}{J_0})_yv\xi_r^3dy\Big|\\
&&\leqslant \Big(\int\varrho_0^{\alpha+1}v^2dy+\int\varrho_0^{\alpha+2}\Theta^2dy\Big)^{\frac{2q}{q-1}}+C\left(\overline{J}\frac{1}{\underline{J}}
(\sup\limits_{y\in\mathbb R}\varrho_0)^{-\frac{2\alpha+1}{2}}\|\varrho_0^{\frac{\alpha}{2}}J_{0y}\|_{L^2}\right)^{\frac{q+1}{2q}},\\
&|\Pi_4|&=\Big|3\int R\varrho\Theta\frac{\varrho_0^\alpha}{J_0} v\xi_r^2\xi_r^{\prime}dy\Big|\\
&&\leqslant C\Big(\int\varrho_0^{\alpha+1}\Theta\xi_r^3dy\Big)\|v\|_{L^\infty}\|\xi_r^\prime\|_{L^\infty}(\frac{1}{\underline{J}})\\
&&\leqslant\Big(\int\varrho_0^{\alpha+1}\Theta dy+\int\varrho_0^{\alpha+1}v^2dy+\int\varrho_0^{\alpha+1}v_y^2dy\Big)^\frac{2q}{q-1}+C\left((\sup\limits_{y\in\mathbb R}\varrho_0)^{-\frac{\alpha+1}{2}}\|\xi_r^\prime\|_{L^\infty}\frac{1}{\underline{J}}\right)^\frac{3+q}{4q},\\
&|\Pi_5|&=\Big|-\int|\frac{\Theta_y} J|^{p-2}\frac {\Theta_y}J(\frac{\varrho_0^\alpha}{J_0})_y\xi_r^3dy\Big|\\
&&\leqslant \eta\int\frac{\varrho_0^{\alpha+1}}{J_0}|\frac{\Theta_y}{J}|^pJdy+C(\eta)\left(\frac{1}{\underline{J}}\right)^{p+1}
(\sup\limits_{y\in\mathbb R}\varrho_0)^{-(2\alpha-p-1)}\|\varrho_0^{\frac{\alpha}{p}}J_{0y}\|_{L^p}^p,\\
&|\Pi_6|&=\Big|-3\int\frac{\varrho_0^\alpha}{J_0}|\frac{\Theta_y} J|^{p-2}\frac {\Theta_y}J\xi_r^2\xi_r^{\prime}dy\Big|\\
&&\leqslant \eta\int\frac{\varrho_0^{\alpha+1}}{J_0}|\frac{\Theta_y}{J}|^pJdy+C(\eta)\left(\frac{1}{\underline{J}}\right)^{(p-1)}\int\varrho_0^{\alpha-p+1}\xi_r^{3-p}|\xi_r^\prime|^pdy,\\
&|\Pi_7|&=\Big|\frac12\int R\frac{\varrho_0^\alpha}{J_0}\varrho\Theta v_y\xi_r^3dy\Big|\\
&&\leqslant\Big(\int\varrho_0^{\alpha+1}\Theta dy+\int\varrho_0^{\alpha+1}v_y^2dy\Big)^{\frac{2q} {q-1}}
+\eta\int\frac{\varrho_0^{\alpha}}{J_0}\left|\frac{v_y}{J}\right|^q\frac{v_{yy}^2}{J} dy+
C(\eta)\left(\Big(\frac1 {\underline J}\Big)\overline{J}\right)^{\frac{2q(q+2)}{q^2-2q+7}}
\end{eqnarray*}
hold for any given $\eta\in(0,1)$ and $\epsilon\in(0,1)$. So, one obtains that
\begin{eqnarray}\label{5dlg-E13}
&&\frac d {dt}\int\Big(\varrho_0^{\alpha+1}v^2+\varrho_0^{\alpha+1}\Theta\Big)\xi_r^3dy+\int\frac{\varrho_0^\alpha}{J_0}|\frac{v_y}{J}|^qJ\xi_r^3dy\\
&&\leqslant \left(\int\varrho_0^{\alpha+1}v^2dy+\int\varrho_0^{\alpha+1}\Theta dy+\int\varrho_0^{\alpha+1}v_y^2dy
\right)^{\frac{2q}{q-1}}\nonumber\\
&&+\eta\int\frac{\varrho_0^{\alpha+1}}{J_0}|\frac{\Theta_y}{J}|^pJ\xi_r^3dy+\eta\int\frac{\varrho_0^{\alpha}}{J_0}\left|\frac{v_y}{J}\right|^q\frac{v_{yy}^2}{J} dy\nonumber\\
&&+C(\eta)(\sup\limits_{y\in\mathbb R}\varrho_0)^{-(2\alpha+1)}\|\xi_r^\prime\|_{L^\infty}\left(\frac{1}{\underline{J}}\right)^{q+2}
\left(\int(\varrho_0^\alpha)_y^{\frac{2q}{2-q}}dy\right)^{\frac{2(2-q)}{5-q}}\overline{J}^{q+2}\nonumber\\
&&+C\left(\|\varrho_0^{\frac{\alpha}{2}}J_{0y}\|_{L^2}\|(\varrho_0^\alpha)_y\|_{L^p}\|(\varrho_0^\alpha)_y\|_{L^\infty}(\frac{1}{\underline{J}})^{\frac{1}{q}} (\sup\limits_{y\in\mathbb R}\varrho_0)^{-\frac{(3q-2)\alpha+q}{2q}}\right)^\frac{4q}{5-q}\nonumber\\
&&+C(\eta)\left(\frac{1}{\underline{J}}\right)^{(p-1)}\int\varrho_0^{\alpha-p+1}|\xi_r^\prime|^pdy
+C(\eta)\left(\int\varrho_0^{-\frac{q+(q-2)\alpha}{2(2-q)}}|\xi_r^\prime|^{\frac{q}{2-q}}dy\right)^{\frac{4(2-q)}{5-q}}\nonumber
\end{eqnarray}
holds for any given $\eta\in(0,1).$

{\sl\textit{Step 2.}} Multiplying both sides of equation (\ref{1dlg-E15}) by $\varrho_0^{\alpha+1}\Theta\xi_r^3$, and integrating the results over $\mathbb R$, one finds that
\begin{eqnarray}\label{5dlg-E15}
&&\frac1 2\frac d {dt}\int\varrho_0^{\alpha+2}\Theta^2\xi_r^3dy+\int\frac{\varrho_0^{\alpha+1}}{J_0}|\frac{\Theta_y} J|^pJ\xi_r^3dy\nonumber\\
&&=-\int|\frac{\Theta_y} J|^{p-2}\frac {\Theta_y}J(\frac{\varrho_0^{\alpha+1}}{J_0})_y\Theta \xi_r^3dy-\int\frac{\varrho_0^{\alpha+1}}{J_0}R\varrho\Theta^2{v_y}\xi_r^3dy+\int\frac{\varrho_0^{\alpha+1}}{J_0}|\frac{v_y}J|^{q}J\Theta \xi_r^3dy\nonumber\\
&&-3\int|\frac{\Theta_y} J|^{p-2}\frac {\Theta_y}J\frac{\varrho_0^{\alpha+1}}{J_0}\Theta \xi_r^2\xi_r^{\prime}dy:=\sum_{i=1}^4{\mathbb G}_i.
    \end{eqnarray}
Each term on the right hand of (\ref{5dlg-E15}) is estimated as follows
\begin{eqnarray*}
&&|{\mathbb G}_1|=\Big|-\int|\frac{\Theta_y} J|^{p-2}\frac {\Theta_y}J(\frac{\varrho_0^{\alpha+1}}{J_0})_y\Theta \xi_r^3dy\Big|\\
&&\leqslant \Big(\int\varrho_0^{\alpha+2}\Theta^2dy+\int\varrho_0^{\alpha+1}\Theta_y^2dy\Big)^{\frac{2q}{q-1}}
+\epsilon\int\frac{\varrho_0^{\alpha+1}}{J_0}|\frac{\Theta_y}{J}|^pJ\xi_r^3dy\\
&&+C(\epsilon)\left(\|\varrho_0^{\frac{\alpha}{2}}J_{0y}\|_{L^2}\|(\varrho_0^\alpha)_y\|_{L^p}
\|(\varrho_0^\alpha)_y\|_{L^\infty}(\frac{1}{\underline{J}})^{\frac{1}{p}} (\sup\limits_{y\in\mathbb R}\varrho_0)^{-\frac{(3p-2)\alpha+p}{2p}}\right)^\frac{4q}{5-q},\\
&&|{\mathbb G}_2|=\Big|-\int\frac{\varrho_0^{\alpha+1}}{J_0}R\varrho\Theta^2{v_y}\xi_r^3dy\Big|\\
&&\leqslant\Big(\int\varrho_0^{\alpha+2}\Theta^2 dy+\int\varrho_0^{\alpha+1}v^2dy\Big)^{\frac{2q} {q-1}}
+\eta\int\frac{\varrho_0^{\alpha}}{J_0}\left|\frac{v_y}{J}\right|^q\frac{v_{yy}^2}{J} dy\\
&&+C(\eta)\left({\overline J}\Big(\frac{1 }{\underline J}\Big)(\sup\limits_{y\in\mathbb R}\varrho_0)^{-\frac{2\alpha+1}{q+2}}\right)^{\frac{2q(q+2)}{q^2-2q+7}},\\
&&|{\mathbb G}_3|=\Big|\int\frac{\varrho_0^{\alpha+1}}{J_0}|\frac{v_y}J|^{q}J\Theta \xi_r^3dy\Big|\\
&&\leqslant
\left(\int\varrho_0^{\alpha+1}\Theta_y^2dy+\int\varrho_0^{\alpha+2}\Theta^2dy+\int\frac{\varrho_0^{\alpha}}{J_0}|\frac{v_y}J|^{q}Jdy\right)^{\frac{2q}{q-1}}
+C\left(\overline{\varrho}^{-\frac{2\alpha+1}{4}}\right)^{\frac{2q}{q+1}}\\
&&|{\mathbb G}_4|=\Big|-3\int|\frac{\Theta_y} J|^{p-2}\frac {\Theta_y}J\frac{\varrho_0^{\alpha+1}}{J_0}\Theta \xi_r^2\xi_r^{\prime}dy\Big|\\
&&\leqslant \Big(\int\frac{\varrho_0^{\alpha+2}}{J_0}|\frac{\Theta_y} J|^{p}J\xi_r^3dy+\int\varrho_0^{\alpha+2}\Theta^2 \xi_r^3dy\Big)^\frac{2q}{q-1}\\
&&+C\left(\int\varrho_0^{\frac{(2-p)\alpha-2(p-1)}{2-p}}\xi_r^\frac{p(3-p)}{2-p}|\xi_r^\prime|^\frac{2p}{2-p}dy\right)^\frac{2q(2-p)}{pq+3p+2q-2}
    \end{eqnarray*}
hold for any given $\eta\in(0,1)$. So, it is easy to get that
\begin{eqnarray}\label{5dlg-E16}
&&\frac1 2\frac d {dt}\int\varrho_0^{\alpha+2}\Theta^2\xi_r^3dy+\int\frac{\varrho_0^{\alpha+1}}{J_0}|\frac{\Theta_y} J|^pJ\xi_r^3dy\nonumber\\
&&\leqslant \Big(\int\varrho_0^{\alpha+2}\Theta^2dy+\int\varrho_0^{\alpha+1}\Theta_y^2dy+\int\varrho_0^{\alpha+1}v^2dy+\int\frac{\varrho_0^{\alpha+2}}{J_0}|\frac{\Theta_y} J|^{p}Jdy\Big)^{\frac{2q} {q-1}}\nonumber\\
&&+\eta\int\frac{\varrho_0^{\alpha}}{J_0}\left|\frac{v_y}{J}\right|^q\frac{v_{yy}^2}{J} dy
+C\left(\|\varrho_0^{\frac{\alpha}{2}}J_{0y}\|_{L^2}\|(\varrho_0^\alpha)_y\|_{L^p}
\|(\varrho_0^\alpha)_y\|_{L^\infty}(\frac{1}{\underline{J}})^{\frac{1}{p}} (\sup\limits_{y\in\mathbb R}\varrho_0)^{-\frac{(3p-2)\alpha+p}{2p}}\right)^\frac{4q}{5-q}\nonumber\\
&&+C\left(\int\varrho_0^{\frac{(2-p)\alpha-2(p-1)}{2-p}}\xi_r^\frac{p(3-p)}{2-p}|\xi_r^\prime|^\frac{2p}{2-p}dy\right)^\frac{2q(2-p)}{pq+3p+2q-2}
    \end{eqnarray}
holds for any given $\eta\in(0,1)$.

{\sl\textit{Step 3.}} Differentiating (\ref{1dlg-E14}) with respect to $y,$ multiplying the resultant by $\varrho_0^{\alpha}v_y\xi_r^2$ and integrating the results over $\mathbb R$, one gets that
\begin{eqnarray}\label{5dlg-E17}
&&\frac1 2\frac d {dt}\int\varrho_0^{\alpha+1}v_y^2\xi_r^2dy+(q-1)\int\frac{\varrho_0^\alpha}{J_0}|\frac{v_y}{J}|^{q-2}\frac {v_{yy}^2}J\xi_r^2dy\nonumber\\
&&=-\int\varrho_{0y}\frac{\varrho_0^\alpha}{J_0}v_tv_y\xi_r^2dy-(q-1)\int|\frac{v_y}{J}|^{q-2}\frac {v_{yy}} {J}(\frac{\varrho_0^\alpha}{J_0})_yv_y\xi_r^2dy+\int(q-1)|\frac{v_y}{J}|^{q}J_y (\frac{\varrho_0^\alpha}{J_0})_y\xi_r^2dy\nonumber\\
&&+\int\frac{\varrho_0^\alpha}{J_0}(q-1)|\frac{v_y}{J}|^{q-2}\frac {{v_y}{J_y}} {J^2}v_{yy}\xi_r^2dy+\int R(\varrho\Theta)_y(\frac{\varrho_0^\alpha}{J_0})_{y}v_y\xi_r^2dy+\int R\frac{\varrho_0^\alpha}{J_0}(\varrho\Theta)_yv_{yy}\xi_r^2dy,\nonumber\\
&&-2(q-1)\int\frac{\varrho_0^\alpha}{J_0}|\frac{v_y}{J}|^{q-2}\frac {v_{yy}} {J}v_y\xi_r\xi_r^{\prime}dy+2(q-1)\int\frac{\varrho_0^\alpha}{J_0}|\frac{v_y}{J}|^{q}J_y \xi_r\xi_r^{\prime}dy\nonumber\\
&&+2\int R(\varrho\Theta)_y\frac{\varrho_0^\alpha}{J_0} v_y\xi_r\xi_r^{\prime}dy:=\sum_{i=1}^9{\mathbb J}_i.
\end{eqnarray}
Now we estimate ${\mathbb J}_1$-${\mathbb J}_5$ on the right hand of (\ref{5dlg-E17}) as follows
\begin{eqnarray*}
&|{\mathbb J}_1|&=\Big|-\int\varrho_{0y}\frac{\varrho_0^\alpha}{J_0}v_tv_y\xi_r^2dy\Big|\\
&&\leqslant \Big(\int\varrho_0^{\alpha+1}v_y^2dy\Big)^{\frac{2q}{q-1}}
+\eta\int\varrho_0^{\alpha+1}v_t^2dy+C(\eta)(\sup\limits_{y\in{\mathbb R}}\varrho_0)^{-\frac{4q\alpha}{q+1}}\|(\varrho_0^\alpha)_y\|_{L^\infty}^{\frac{4q}{q+1}},\\
&|{\mathbb J}_2|&=\Big|-\int(q-1)|\frac{v_y}{J}|^{q-2}\frac {v_{yy}} {J}(\frac{\varrho_0^\alpha}{J_0})_yv_y\xi_r^2dy\Big|\\
&&\leqslant \Big(\int\varrho_0^{\alpha+1}v_y^2dy\Big)^{\frac{2q}{q-1}}+\eta\int\frac{\varrho_0^\alpha}{J_0}|\frac{v_y}{J}|^{q-2}\frac {v_{yy}^2}Jdy\\
&&+C(\eta)(\sup\limits_{y\in{\mathbb R}}\varrho_0)^{-\frac{16\alpha}{5-q}}\Big(\frac1 {\underline J}\Big)^{\frac {2(q-1)(q+2)}{5-q}}{\overline J}^{\frac {5(q-1)}{5-q}}\|(\varrho_0^\alpha)_y\|_{L^2}^{\frac{2(q+2)} {5-q}}\|\varrho_0^{\frac{\alpha}{2}}J_{0y}\|_{L^2}^{\frac{2(q+2)} {5-q}},\\
&|{\mathbb J}_3|&=\Big|\int(q-1)|\frac{v_y}{J}|^{q}J_y (\frac{\varrho_0^\alpha}{J_0})_y\xi_r^2dy\Big|\\
&&\leqslant \Big(\int\varrho_0^{\alpha} J_y^2dy+\int\varrho_0^{\alpha+1}v_y^2dy\Big)^{\frac{2q} {q-1}}+\eta\int\frac{\varrho_0^\alpha}{J_0}|\frac{v_y}{J}|^{q-2}\frac {v_{yy}^2}Jdy\\
&&+C(\eta)\left((\sup\limits_{y\in{\mathbb R}}\varrho_0)^{-\frac{2\alpha+1}2}\left(\frac{1}{\underline{J}}
\right)^{q}{\overline J}^{\frac{(q-1)^2}{q+2}}\|(\varrho_0^{\alpha})_y\|_{L^\infty}\|\varrho_0^{\frac{\alpha}{2}}J_{0y}\|_{L^2}\right)^{\frac{2q(q+2)}{17q-q^2+2}},\\
&|{\mathbb J}_4|&=(q-1)\Big|\int\frac{\varrho_0^\alpha}{J_0}|\frac{v_y}{J}|^{q-2}\frac {{v_y}{J_y}} {J^2}v_{yy}\xi_r^2dy\Big|\\
&&\leqslant \Big(\int\varrho_0^{\alpha} J_y^2dy+\int\varrho_0^{\alpha+1}v_y^2dy\Big)^{\frac{2q} {q-1}}+\eta\int\frac{\varrho_0^\alpha}{J_0}|\frac{v_y}{J}|^{q-2}\frac {v_{yy}^2}Jdy\\
&&+C(\eta)\left(\Big(\frac1 {\underline J}\Big)^{\frac {q+1}{2}}(\sup\limits_{y\in{\mathbb R}}\varrho_0)^{-\frac{q(2\alpha+1)}{2(q+2)}}
{\overline J}^{\frac{q^2-1}{2(q+2)}}\right)^{\frac{2q(q+2)}{2q-q^2+1}},\\
&|{\mathbb J}_5|&=\Big|\int R(\varrho\Theta)_y(\frac{\varrho_0^\alpha}{J_0})_{y}v_y\xi_r^2dy\Big|\\
&&\leqslant R\left(\Big|\int \frac{ 1}{ J}J_{0y}\varrho_{0}\Theta(\varrho_0^\alpha)_{y}v_y\xi_r^2dy\Big|
+\Big|\int\frac{ 1}{ J}\varrho_{0y}J_0\Theta(\frac{\varrho_0^\alpha}{J_0})_{y}v_y\xi_r^2dy\Big|\right.\\
&&~~~~+\left.\Big|\int \frac 1{J^2}\Theta J_0\varrho_0J_y(\frac{\varrho_0^\alpha}{J_0})_{y}v_y\xi_r^2 dy\Big|+\Big|\int \frac{ 1}{ J}J_0\varrho_0\Theta_y(\frac{\varrho_0^\alpha}{J_0})_{y}v_y\xi_r^2dy\Big|\right)\\
&&:=\sum_{n=1}^4|J_n|,
    \end{eqnarray*}
where
\begin{eqnarray*}
&|J_1|&=R\Big|\int \frac{ 1}{ J}J_{0y}\varrho_{0}\Theta(\frac{\varrho_0^\alpha}{J_0})_{y}v_ydy\xi_r^2\Big|\\
&&\leqslant \Big(\int\varrho_0^{\alpha+2}\Theta^2dy+\int\varrho_0^{\alpha+1}v_y^2dy\Big)^{\frac{2q}{q-1}}+\eta\int\frac{\varrho_0^\alpha}{J_0}|\frac{v_y}{J}|^{q-2}\frac {v_{yy}^2}Jdy\\
&&+C(\eta)\left((\sup\limits_{y\in{\mathbb R}}\varrho_0)^{-\frac{2\alpha+1}{q+2}}\overline{J}^{\frac{q-1}{q+2}}\Big(\frac1 {\underline J}\Big)\left(\int\varrho_0^{\alpha} J_{0y}^2dy\right)^{\frac12}\right)^{\frac{4q(q+2)}{3q^2+q+4}},\\
&|J_2|&=R\Big|\int\frac{ 1}{ J}J_0\varrho_{0y}\Theta(\frac{\varrho_0^\alpha}{J_0})_{y}v_y\xi_r^2dy\Big|\\
&&\leqslant \Big(\int\varrho_0^{\alpha+2}\Theta^2dy+\int\varrho_0^{\alpha+1}v_y^2dy\Big)^{\frac{2q}{q-1}}\\
&&+C\left((\sup\limits_{y\in{\mathbb R}}\varrho_0)^{-\frac{4\alpha+1}2}\overline{J}\Big(\frac1 {\underline J}\Big)\|(\varrho_0^{\alpha})_y\|^2_{L^\infty}\left(\int\varrho_0^{\alpha} J_{0y}^2dy\right)\right)^{\frac{2q}{q+1}},\\
&|J_3|&=R\Big|\int \frac 1{J^2}\Theta J_0\varrho_0J_y(\frac{\varrho_0^\alpha}{J_0})_{y}v_y\xi_r^2dy\Big|\\
&&\leqslant\Big(\int\varrho_0^{\alpha} J_y^2dy+\int\varrho_0^{\alpha+2}\Theta^2dy+\int\varrho_0^{\alpha+1}v_y^2dy\Big)^{\frac{2q} {q-1}}+\eta\int\frac{\varrho_0^\alpha}{J_0}|\frac{v_y}{J}|^{q-2}\frac {v_{yy}^2}Jdy\\
&&+C\left((\sup\limits_{y\in{\mathbb R}}\varrho_0)^{-\frac{\alpha(q+4)+1}{q+2}}{\overline J}^{\frac{2q+1}{q+2}}\Big(\frac1 {\underline J}\Big)^2\|(\varrho_0^{\alpha})_y\|_{L^\infty}\left(\int\varrho_0^{\alpha} J_{0y}^2dy\right)\right)^{\frac{2q(q+2)}{q^2+2q+3}},\\
&|J_4|&=R\Big|\int \frac{ 1}{ J}J_0\varrho_0\Theta_y(\frac{\varrho_0^\alpha}{J_0})_{y}v_y\xi_r^2dy\Big|\\
&&\leqslant \Big(\int\varrho_0^{\alpha+1}\Theta_y^2dy+\int\varrho_0^{\alpha+1}v_y^2dy\Big)^{\frac{2q} {q-1}}\\
&&+C\left((\sup\limits_{y\in{\mathbb R}}\varrho_0)^{-2\alpha}\overline{J}\Big(\frac1 {\underline J}\Big)\|(\varrho_0^\alpha)_y\|_{L^\infty}\left(\int\varrho_0^{\alpha} J_{0y}^2dy\right)\right)^{\frac{2q} {q+1}}
\end{eqnarray*}
hold for any given $\eta\in(0,1)$ and $\epsilon\in(0,1).$ For ${\mathbb J}_6$, one finds that
\begin{eqnarray*}
&|{\mathbb J}_6|&=\Big|\int R\frac{\varrho_0^\alpha}{J_0}(\varrho\Theta)_yv_{yy}\xi_r^2dy\Big|\\
&&\leqslant R\left(\Big|\int \frac{ 1}{ J}J_{0y}\varrho_0^{\alpha+1} \Theta v_{yy}\xi_r^2dy\Big|
+\Big|\int\frac{ 1}{ J}\varrho_{0y}\varrho_0^\alpha\Theta v_{yy}\xi_r^2dy\Big|\right.\\
&&~~~~+\left.\Big|\int \frac 1{J^2}J_y\varrho_0^{\alpha+1}\Theta v_{yy}\xi_r^2dy\Big|+\Big|\int \frac{ 1}{ J}\varrho_0^{\alpha+1}\Theta_yv_{yy}\xi_r^2 dy\Big|\right)\\
&&:=\sum_{n=1}^4|{\widetilde J}_n|,
\end{eqnarray*}
where
\begin{eqnarray*}
&|{\widetilde J}_1|&=R\Big|\int \frac{ 1}{ J}J_{0y}\varrho_0^{\alpha+1}\Theta v_{yy} \xi_r^2dy\Big|\\
&&\leqslant \left(\int\varrho_0^{\alpha+1}v_y^2dy+\int\varrho_0^{\alpha+2}\Theta^2dy\right)^{\frac{2q}{q-1}}
+\eta\int\frac{\varrho_0^\alpha}{J_0}|\frac{v_y}{J}|^{q-2}\frac {v_{yy}^2}Jdy\\
&&+C(\eta)\left((\sup\limits_{y\in{\mathbb R}}\varrho_0)^{-\frac{(10-2q)\alpha+(2-3q)}{4(q+2)}}\overline{J}^{\frac{(q-1)(2-q)}{2(q+2)}}\Big(\frac1 {\underline J}\Big)^{\frac{5-q}2}\|(\varrho_0^{\alpha})_y\|_{L^\infty}\Big(\int\varrho_0^{\alpha}J_{0y}\xi_r^2dy\Big)^{\frac12}\right)^{\frac{q(q+2)}{q^2+1}},\\
&|{\widetilde J}_2|&=R\Big|\int\frac{ 1}{ J}\varrho_{0y}\varrho_0^\alpha\Theta v_{yy} \xi_r^2dy\Big|\\
&&\leqslant \left(\int\varrho_0^{\alpha+1}v_y^2dy+\int\varrho_0^{\alpha+2}\Theta^2dy\right)^{\frac{2q}{q-1}}+\eta\int\frac{\varrho_0^\alpha}{J_0}|\frac{v_y}{J}|^{q-2}\frac {v_{yy}^2}Jdy\\
&&+C(\eta)\left((\sup\limits_{y\in{\mathbb R}}\varrho_0)^{-\frac{(6-q)\alpha+4}{q+2}}\overline{J}^{\frac{2-q}{2(q+2)}}\Big(\frac1 {\underline J}\Big)^{\frac{q-1}2}\|(\varrho_0^{\alpha})_y\|_{L^\infty}\right)^{\frac{q(q+2)}{q^2-q+1}},\\
&|{\widetilde J}_3|&=\Big|\int \frac{1}{J^2}J_y\varrho_0^{\alpha+1}\Theta v_{yy}\xi_r^2dy\Big|\\
&&\leqslant \left(\int\varrho_0^{\alpha} J_y^2dy+\int\varrho_0^{\alpha+1}v_y^2dy+\int\varrho_0^{\alpha+2}\Theta^2dy+\int\varrho_0^{\alpha+1}\Theta_y^2dy\right)^{\frac{2q}{q-1}}
+\eta\int\frac{\varrho_0^\alpha}{J_0}|\frac{v_y}{J}|^{q-2}\frac {v_{yy}^2}Jdy\\
&&+C(\eta)\left((\sup\limits_{y\in{\mathbb R}}\varrho_0)^{-\frac{(10-2q)\alpha+(2-3q)}{4(q+2)}}\overline{J}^{\frac{2-q}{2(q+2)}}\Big(\frac1 {\underline J}\Big)^{\frac{3-q}2}\|(\varrho_0^{\alpha})_y\|_{L^\infty}\right)^{\frac{4q(q+2)}{5q^2-7q+6}}\\
&|{\widetilde J}_4|&=\Big|\int \frac{ 1}{ J}\varrho_0^{\alpha+1}\Theta_y v_{yy}\xi_r^2dy\Big|\\
&&\leqslant \left(\int\varrho_0^{\alpha+1}v_y^2dy+\int\varrho_0^{\alpha+1}\Theta_y^2dy\right)^{\frac{2q}{q-1}}
+\eta\int\frac{\varrho_0^\alpha}{J_0}|\frac{v_y}{J}|^{q-2}\frac {v_{yy}^2}Jdy\\
&&+C(\eta)\left((\sup\limits_{y\in{\mathbb R}}\varrho_0)^{-\frac{2\alpha(2-q)-(q-1)}{q+2}}\overline{J}^{\frac{2-q}{2(q+2)}}\Big(\frac1 {\underline J}\Big)^{\frac{5-q}2}\right)^{\frac{q(q+2)}{q^2-q+1}}
\end{eqnarray*}
hold for any given $\eta\in(0.1).$  For ${\mathbb J}_7$-${\mathbb J}_9,$ the estimate are given as follows
\begin{eqnarray*}
&|{\mathbb J}_7|&=\Big|-2\int(q-1)\frac{\varrho_0^\alpha}{J_0}|\frac{v_y}{J}|^{q-2}\frac {v_{yy}} {J}v_y\xi_r\xi_r^{\prime}dy\Big|\\
&&\leqslant \Big(\int\frac{\varrho_0^\alpha}{J_0}|\frac{v_y}{J}|^qJdy\Big)^{\frac{2q}{q-1}}+\eta\int\frac{\varrho_0^\alpha}{J_0}|\frac{v_y}{J}|^{q-2}\frac {v_{yy}^2}Jdy+
C(\eta)\left(\|\xi_r^\prime\|_{L^\infty}\Big(\frac1 {\underline J}\Big)\right)^\frac{4q}{q+1},\\
&|{\mathbb J}_8|&=\Big|2\int(q-1)\frac{\varrho_0^\alpha}{J_0}|\frac{v_y}{J}|^{q}J_y \xi_r\xi_r^{\prime}dy\Big|\\
&&\leqslant \Big(\int\frac{\varrho_0^\alpha}{J_0}|\frac{v_y}{J}|^qJdy\Big)^{\frac{1}{2}}\Big(\int\varrho_0^{\alpha} J_y^2\xi_r^2dy\Big)^{\frac{1}{2}}\|\xi_r^\prime\|_{L^\infty}\|v_y\|_{L^\infty}^\frac{q}{2}\left(\frac{1}{\underline{J}}\right)^\frac{q+1}{2}\\
&&\leqslant \Big(\int\frac{\varrho_0^\alpha}{J_0}|\frac{v_y}{J}|^qJdy+\int\varrho_0^{\alpha} J_y^2dy\Big)^{\frac{2q}{q-1}}+\eta\int\frac{\varrho_0^\alpha}{J_0}|\frac{v_y}{J}|^{q-2}\frac {v_{yy}^2}Jdy\\
&&+C(\eta)\left((\sup\limits_{y\in\mathbb R}\varrho_0)^{-\frac{2\alpha+1}{2(q+2)}}\overline{J}^\frac{q}{2(q+2)}\|\xi_r^\prime\|_{L^\infty}
\left(\frac{1}{\underline{J}}\right)^\frac{q+1}{2}\right)^\frac{4q(q+2)}{7q-q^2+2}
,\\
&|{\mathbb J}_9|&=\Big|2\int R(\varrho\Theta)_y\frac{\varrho_0^\alpha}{J_0} v_y\xi_r\xi_r^{\prime}dy\Big|\\
&&\leqslant\Big|2\int \frac{ 1}{ J}J_{0y}\varrho_0^{\alpha+1} v_y\xi_r\xi_r^{\prime}dy\Big|+\Big|2\int R\frac{ 1 }{ J}\varrho_{0y}\Theta\varrho_0^{\alpha+1} v_y\xi_r\xi_r^{\prime}dy\Big|\\
&&~~+\Big|2\int R\frac{ 1}{J^2}\Theta J_y\varrho_0^{\alpha+1} v_y\xi_r\xi_r^{\prime}dy\Big|+\Big|2\int R\frac{ 1 }{ J}\Theta_y\varrho_0^{\alpha+1} v_y\xi_r\xi_r^{\prime}dy\Big|\\
&&:=\sum_{k=1}^4|J_k|,
   \end{eqnarray*}
where
\begin{eqnarray*}
&|J_1|&=\Big|2\int \frac{ 1}{ J}J_{0y}\varrho_0^{\alpha+1} v_y\xi_r\xi_r^{\prime}dy\Big|\\
&&\leqslant \left(\int\varrho_0^{\alpha+1}v_y^2dy\right)^\frac{2q}{q-1}+C\left(\int\varrho_0^{\alpha} J_{0y}^2dy\right)^\frac{2q}{3q+1}(\sup\limits_{y\in\mathbb R}\varrho_0)^\frac{2q}{3q+1}\Big(\frac1 {\underline J}\Big)^\frac{2q}{3q+1}\|\xi_r^\prime\|_{L^\infty}^\frac{2q}{3q+1}\overline{J_0}^\frac{4q}{q+1},\\
&|J_2|&=\Big|2\int R\frac{ 1 }{ J}\varrho_{0y}\Theta\varrho_0^{\alpha+1} v_y\xi_r\xi_r^{\prime}dy\Big|\\
&&\leqslant \left(\int\varrho_0^{\alpha+1}v_y^2dy+\int\varrho_0^{\alpha+2}\Theta^2dy\right)^{\frac{2q}{q-1}}+C(\sup\limits_{y\in\mathbb R}\varrho_0)^{-\frac{q(2\alpha+1)}{q+1}}\|\xi_r^\prime\|_{L^\infty}^\frac{2q}{q+1}\|(\varrho_0^\alpha)_y\|_{\infty}^\frac{2q}{q+1}\overline{J_0}^\frac{2q}{q+1},\\
&|J_3|&=\Big|2\int R\frac{ 1}{J^2}\Theta J_y\varrho_0^{\alpha+1} v_y\xi_r\xi_r^{\prime}dy\Big|\\
&&\leqslant \left(\int\varrho_0^{\alpha+1}v_y^2dy+\int\varrho_0^{\alpha+2}\Theta^2\xi_r^2dy+\int\varrho_0^{\alpha}J_y^2dy\right)^{\frac{2q}{q-1}}
+\eta\int\frac{\varrho_0^\alpha}{J_0}|\frac{v_y}{J}|^{q-2}\frac {v_{yy}^2}Jdy\\
&&+C\left((\sup\limits_{y\in\mathbb R}\varrho_0)^{-\frac{q(2\alpha+1)}{q+2}}\|\xi_r^\prime\|_{L^\infty}\overline{J}^\frac{q}{q+2}\right)^\frac{4q(q+2)}{q^2+7},\\
&|J_4|&=\Big|2\int R\frac{ 1 }{ J}\Theta_y\varrho_0^{\alpha+1} v_y\xi_r\xi_r^{\prime}dy\Big|\\
&&\leqslant \left(\int\varrho_0^{\alpha+1}v_y^2dy+\int\varrho_0^{\alpha+2}\Theta_y^2dy\right)^{\frac{2q}{q-1}}
+C\left(\|\xi_r^\prime\|_{L^\infty}{\overline J}\frac{1}{\underline{J}}\right)^\frac{2q}{q+1}
    \end{eqnarray*}
hold for any given $\eta\in(0,1).$  In summary, one can get that
\begin{eqnarray}\label{5dlg-E18}
&&\frac d {dt}\int\varrho_0^{\alpha+1}v_y^2\xi_r^2dy+\int\frac{\varrho_0^\alpha}{J_0}|\frac{v_y}{J}|^{q-2}\frac {v_{yy}^2}J\xi_r^2dy\\
&&\leqslant\Big(\int\varrho_0^{\alpha+2}\Theta^2dy+\int\varrho_0^{\alpha} J_y^2dy+\int\varrho_0^{\alpha+1}\Theta_y^2dy+\int\varrho_0^{\alpha+1}v_y^2dy+\int\frac{\varrho_0^\alpha}{J_0}|\frac{v_y}{J}|^qJdy\Big)^{\frac{2q} {q-1}}\nonumber\\
&&+\eta\int\varrho_0^{\alpha+1}v_t^2dy+\eta\int\frac{\varrho_0^\alpha}{J_0}|\frac{v_y}{J}|^{q-2}\frac {v_{yy}^2}Jdy+C(\eta)(\sup\limits_{y\in{\mathbb R}}\varrho_0)^{-\frac{24\alpha}{5-q}}{\overline J}^{\frac {2q} {5-q}}\|(\varrho_0^\alpha)_y\|_{L^\infty}^{\frac{8q}{5-q}}\left(\frac{1}{\underline{J}}\right)^{\frac{2(q+2)}{5-q}}\nonumber\\
&&+C(\int\varrho_0^\alpha|\xi_r^{\prime}|^2dy)^\frac{2q(q+2)}{5-q}(\int\varrho_0^{\alpha} J_{0y}^2dy)^\frac{2q}{3q+1}((\sup\limits_{y\in\mathbb R}\varrho_0)^{-\frac{2\alpha+1}{2}}\|\xi_r^\prime\|_{L^\infty}\overline{J}^\frac{q+1}{q+2}
\frac{1}{\underline{J}}\|(\varrho_0^\alpha)_y\|_{L^\infty})^\frac{4q(q+2)}{5-q}.\nonumber
   \end{eqnarray}
holds for any given $\eta\in(0,1).$

{\sl\textit{Step 4.}} Differentiating (\ref{1dlg-E15}) with respect to $y,$ multiplying both sides of  the above differential equation by $\varrho_0^{\alpha}\Theta_y\xi_r^2$ and integrating the results over $\mathbb R$, one gets that
\begin{eqnarray}\label{5dlg-E19}
&&\frac1 2\frac d {dt}\int\varrho_0^{\alpha+1}\Theta_y^2\xi_r^2dy+(p-1)\int\frac{\varrho_0^\alpha}{J_0}|\frac{\Theta_y} J|^{p-2}\frac {\Theta_{yy}^2}J\xi_r^2dy\nonumber\\
&&=-\int\varrho_{0y}\varrho_0^{\alpha}\Theta_t\Theta_y\xi_r^2dy-(p-1)\int|\frac{\Theta_y} J|^{p-2}\frac {\Theta_{yy}} {J}(\frac{\varrho_0^\alpha}{J_0})_y\Theta_y\xi_r^2dy\nonumber\\
&&+(p-1)\int|\frac{\Theta_y} J|^{p}J_y (\frac{\varrho_0^\alpha}{J_0})_y\xi_r^2dy+(p-1)\int\varrho_0^{\alpha}|\frac{\Theta_y} J|^{p-2}\frac {{\Theta_y}{J_y}} {J^2}\Theta_{yy}\xi_r^2dy+\int R\varrho\Theta v_y(\frac{\varrho_0^\alpha}{J_0})_{y}\Theta_y\xi_r^2dy\nonumber\\
&&+\int R\varrho\Theta v_{y}\varrho_0^{\alpha}\Theta_{yy}\xi_r^2dy-\int|\frac{v_y}J|^{q}J(\frac{\varrho_0^\alpha}{J_0})_{y}\Theta_y\xi_r^2dy-\int|\frac{v_y}J|^{q}J\frac{\varrho_0^\alpha}{J_0}\Theta_{yy}\xi_r^2dy\nonumber\\
&&-2(p-1)\int|\frac{\Theta_y} J|^{p-2}\frac {\Theta_{yy}} {J}\frac{\varrho_0^\alpha}{J_0}\Theta_y\xi_r\xi_r^{\prime}dy+2(p-1)\int|\frac{\Theta_y} J|^{p}J_y \frac{\varrho_0^\alpha}{J_0}\xi_r\xi_r^{\prime}dy\nonumber\\
&&+2\int R\varrho\Theta v_y\frac{\varrho_0^\alpha}{J_0}\Theta_y\xi_r\xi_r^{\prime}dy-2\int|\frac{v_y}J|^{q}J\frac{\varrho_0^\alpha}{J_0}\Theta_y\xi_r\xi_r^{\prime}dy
:=\sum_{i=1}^{12}{\mathbb W}_i.
    \end{eqnarray}
Each term on the right hand of (\ref{5dlg-E19}) is estimated as follows
\begin{eqnarray*}
&|{\mathbb W}_1|&=\Big|-\int\varrho_{0y}\frac{\varrho_0^{\alpha}}{J_0}\Theta_t\Theta_y\xi_r^2dy\Big|\\
&&\leqslant \Big(\int\varrho_0^{\alpha+1}\Theta_y^2dy\Big)^{\frac{2q} {q-1}}+\eta\int\varrho_0^{\alpha+3}\Theta_t^2dy
+C(\eta)(\sup\limits_{y\in{\mathbb R}}\varrho_0)^{-\frac{4q(\alpha+1)}{3q+1}}\|(\varrho_0^\alpha)_y\|_{L^\infty}^{\frac{4q}{3q+1}}\left(\frac{1}{\underline J}\right)^{\frac{4q}{3q+1}},\\
&|{\mathbb W}_2|&=\Big|-(p-1)\int|\frac{\Theta_y} J|^{p-2}\frac {\Theta_{yy}} {J}(\frac{\varrho_0^{\alpha}}{J_0})_y\Theta_y\xi_r^2dy\Big|\\
&&\leqslant \Big(\int\varrho_0^{\alpha+1}\Theta_y^2dy\Big)^{\frac{2q}{q-1}}+\eta\int\frac{\varrho_0^\alpha}{J_0}|\frac{\Theta _y} J|^{p-2}\frac {\Theta _{yy}^2}Jdy\\
&&+C(\eta)\left((\sup\limits_{y\in{\mathbb R}}\varrho_0)^{-\frac{(p+2)\alpha+p}{4}}\Big(\frac{1}{\underline{J}}\Big)^{\frac{p-1}{2}}{\overline J}^{\frac12}\|(\varrho_0^{\alpha})_y\|_{L^\frac{4}{2-p}}\right)^{\frac{8q}{4+p-pq}}\\
&&+C(\eta)\left((\sup\limits_{y\in{\mathbb R}}\varrho_0)^{-\frac{p(2\alpha+1)\alpha+p}{2(p+2)}}{\overline J}^{\frac{p^2-1}{2(p+2)}}\Big(\frac{1}{\underline{J}}\Big)^{\frac{p}{2}}
\left(\int\varrho_0^{\alpha} J_{0y}^2dy\right)^{\frac12}\right)^{\frac{4q(p+2)}{3pq+4q+p}},\\
&|{\mathbb W}_3|&=\Big|(p-1)\int|\frac{\Theta_y} J|^{p}J_y (\frac{\varrho_0^{\alpha}}{J_0})_y\xi_r^2dy\Big|\\
&&\leqslant \Big(\int\varrho_0^{\alpha} J_y^2dy+\int\varrho_0^{\alpha+1}\Theta_y^2dy\Big)^{\frac{2q} {q-1}}+\eta\int\frac{\varrho_0^\alpha}{J_0}|\frac{\Theta _y} J|^{p-2}\frac {\Theta _{yy}^2}Jdy\\
&&+C(\eta)\left((\sup\limits_{y\in{\mathbb R}}\varrho_0)^{-\frac{(p+6)\alpha+2}{2(p+2)}}\overline{ J}^{\frac{2(p-1)}{p+2}}\left(\int\varrho_0^{\alpha} J_{0y}^2dy\right)^{\frac12}\right)^{\frac{2q(p+2)}{4q-2pq+p}}\\
&&+C(\eta)\left((\sup\limits_{y\in{\mathbb R}}\varrho_0)^{-\frac{p(2\alpha+1)}{p+2}}\overline{ J}^{\frac{p^2-1}{p+2}}\|(\varrho_0^\alpha)_y\|_{L^\infty}\right)^{\frac{4q(p+2)}{3pq+q+p}},\\
&|{\mathbb W}_4|&=\Big|(p-1)\int\frac{\varrho_0^{\alpha}}{J_0}|\frac{\Theta_y} J|^{p-2}\frac {{\Theta_y}{J_y}} {J^2}\Theta_{yy}\xi_r^2dy\Big|\\
&&\leqslant \Big(\int\varrho_0^{\alpha} J_y^2dy+\int\varrho_0^{\alpha+1}\Theta_y^2dy\Big)^{\frac{2q}{q-1}}+\eta\int\frac{\varrho_0^\alpha}{J_0}|\frac{\Theta _y} J|^{p-2}\frac {\Theta _{yy}^2}Jdy\\
&&+C(\eta)\left((\sup\limits_{y\in{\mathbb R}}\varrho_0)^{-\frac{p(2\alpha+1)}{2(p+2)}}\Big(\frac{1}{\underline J}\Big)^{\frac {p-1}{2}}{\overline J}^{\frac {p^2-1}{2(p+2)}}\right)^\frac{2q(p+2)}{q+p+1-pq},\\
&|{\mathbb W}_5|&=\Big|\int R\varrho\Theta v_y(\frac{\varrho_0^{\alpha}}{J_0})_{y}\Theta_y\xi_r^2dy\Big|\\
&&\leqslant \Big(\int\varrho_0^{\alpha+2}\Theta^2dy+\int\varrho_0^{\alpha+1}\Theta_y^2dy	+\int\varrho_0^{\alpha+1}v_y^2dy\Big)^{\frac{2q}{q-1}}+\eta\int\frac{\varrho_0^\alpha}{J_0}|\frac{v_y}{J}|^{q-2}\frac {v_{yy}^2}Jdy \\
&&+\eta\int\frac{\varrho_0^\alpha}{J_0}|\frac{\Theta _y} J|^{p-2}\frac {\Theta _{yy}^2}Jdy+C(\eta)\left((\sup\limits_{y\in{\mathbb R}}\varrho_0)^{-\frac {(q+4)\alpha}{q+2}}\Big(\frac1 {\underline J}\Big){\overline J}^\frac{2(q+1)}{q+2}\|(\varrho_0^{\alpha})_y\|_{L^\infty}\right)^{\frac {2q(q+2)}{q^2+3}}\\
&&+C(\eta)\left((\sup\limits_{y\in{\mathbb R}}\varrho_0)^{-\left(\frac {2\alpha+1}{q+2}+\frac {2\alpha+1}{p+2}\right)}\Big(\frac1 {\underline J}\Big){\overline J}^{\frac {2(q-1)}{q+2}+\frac {p-1}{p+2}}\left(\int\varrho_0^{\alpha} J_{0y}^2dy\right)^{\frac12}\right)^{\frac {4q(q+2)(p+2)}{3pq^2-8q+pq+4p+12}},\\
&|{\mathbb W}_6|&=\Big|\int R\varrho\Theta v_{y}\frac{\varrho_0^{\alpha}}{J_0}\Theta_{yy}\xi_r^2dy\Big|\\
 &&\leqslant \Big(\int\varrho_0^{\alpha+1}\Theta_y^2dy+\int\varrho_0^{\alpha+1}v_y^2dy+\int\varrho_0^{\alpha+2}\Theta^2dy\Big)^{\frac{2q} {q-1}}+\eta\int\frac{\varrho_0^{\alpha}}{J_0}|\frac{\Theta _y} J|^{p-2}\frac {\Theta _{yy}^2}Jdy\\
 &&+C(\eta)\left((\sup\limits_{y\in{\mathbb R}}\varrho_0)^{-\left(\frac {2\alpha+1}{q+2}+\frac{(2-p)(2\alpha+1)}{2(p+2)}\right)}
 \Big(\frac1 {\underline J}\Big)^{\frac {q-1}{q+2}+\frac{(2-p)(q-1)}{2(p+2)}}\overline{J}^{\frac{3-p}{2}}\right)^{\frac{2q(q+2)(p+2)}{4pq^2+2pq+2p-16q+12}},\\
&|{\mathbb W}_7|&=\Big|-\int(\frac{\varrho_0^{\alpha}}{J_0})_y\Theta_y|\frac{v_y}J|^{q}J\xi_r^2dy\Big|\\
&& \leqslant\Big(\int\varrho_0^{\alpha+1}\Theta_y^2dy	+\int\varrho_0^{\alpha+1}v_y^2Jdy\Big)^{\frac{2q}{q-1}}+\eta\int\frac{\varrho_0^{\alpha}}{J_0}|\frac{v_y}{J}|^{q-2}\frac {v_{yy}^2}{J}dy \\
&&+C(\eta)\left((\sup\limits_{y\in{\mathbb R}}\varrho_0)^{-\frac {(q-1)(2\alpha+1)}{q+2}}\Big(\frac1 {\underline J}\Big)^q{\overline J}^\frac{2(q-1)}{q+2}\|(\varrho_0^{\alpha})_y\|_{L^\infty}\right)^{\frac {4q(q+2)}{3(q^2+3)}}\\
&&+C(\eta)\left((\sup\limits_{y\in{\mathbb R}}\varrho_0)^{-\left(\frac {q(2\alpha+1)}{2(q+2)}+\frac{2\alpha+1}{p+2}\right)}
 \Big(\frac1 {\underline J}\Big)^{\frac{q}{2}}\overline{J}^{\frac {q^2-1}{2(q+2)}+\frac{p-1}{p+2}}\right)^{\frac{2q(q+2)(p+2)}{4pq-2q^2+4q-p+12}},\\
&|{\mathbb W}_8|&=\Big|-\int\frac{\varrho_0^{\alpha}}{J_0}\Theta_{yy}|\frac{v_y}J|^{q}J\xi_r^2dy\Big|\\
 &&\leqslant \Big(\int\varrho_0^{\alpha+1}\Theta_y^2dy+\int\frac{\varrho_0^{\alpha}}{J_0}\left|\frac{v_y}{J}\right|^qJdy\Big)^{\frac{2q} {q-1}}
 +\eta\int\frac{\varrho_0^{\alpha}}{J_0}|\frac{v_y}{J}|^{q-2}\frac {v_{yy}^2}Jdy+\eta\int\frac{\varrho_0^{\alpha}}{J_0}|\frac{\Theta _y} J|^{p-2}\frac {\Theta _{yy}^2}Jdy\\
 &&+C(\eta)\left((\sup\limits_{y\in{\mathbb R}}\varrho_0)^{-\frac {(6-p)(2\alpha+1)}{4(p+2)}}
 \Big(\frac1 {\underline J}\Big)^{\frac{1+q-p}{2}}\overline{J}^{\frac{2(q-1)}{p+2}}\right)^{\frac{4q(p+2)(q+2)}{5pq^2+12pq-4q^2+4q-2p+8}},\\
&|{\mathbb W}_9|&=\Big|-2(p-1)\int|\frac{\Theta_y} J|^{p-2}\frac {\Theta_{yy}} {J}\frac{\varrho_0^{\alpha}}{J_0}\Theta_y\xi_r\xi_r^{\prime}dy\Big|\\
&&\leqslant \Big(\int\frac{\varrho_0^{\alpha+2}}{J_0}|\frac{\Theta_y} J|^pJdy\Big)^{\frac{2q}{q-1}}+\eta\int\frac{\varrho_0^\alpha}{J_0}|\frac{\Theta _y} J|^{p-2}\frac {\Theta _{yy}^2}Jdy+C(\eta)\|\varrho_0^{-1}\xi_r^\prime\|_{L^\infty}^\frac{4q}{q+1},\\
&|{\mathbb W}_{10}|&=\Big|2(p-1)\int|\frac{\Theta_y} J|^{p}J_y \frac{\varrho_0^{\alpha}}{J_0}\xi_r\xi_r^{\prime}dy\Big|\\
&& \leqslant\Big(\int\varrho_0^{\alpha+1}\Theta_y^2dy	+\int\varrho_0^{\alpha}J_y^2dy\Big)^{\frac{2q}{q-1}}+\eta\int\frac{\varrho_0^{\alpha}}{J_0}|\frac{v_y}{J}|^{q-2}\frac {v_{yy}^2}{J}dy \\
&&+C(\eta)\left(\|\varrho_0^{-\frac12}\xi_r^\prime\|_{L^\infty}(\sup\limits_{y\in{\mathbb R}}\varrho_0)^{-\frac {(p-1)(2\alpha+1)}{p+2}}\Big({\overline J}\Big)^\frac {(p-1)^2}{p+2}\right)^{\frac {4q(q+2)}{9q-4pq+4p+3}},\\
&|{\mathbb W}_{11}|&=\Big|2\int R\varrho\Theta v_y\frac{\varrho_0^{\alpha}}{J_0}\Theta_y\xi_r\xi_r^{\prime}dy\Big|\\
&& \leqslant\Big(\int\varrho_0^{\alpha+1}\Theta_y^2dy	+\int\varrho_0^{\alpha+1}v_y^2dy+\int\varrho_0^{\alpha+2}\Theta^2dy\Big)^{\frac{2q} {q-1}}+\eta\int\frac{\varrho_0^{\alpha}}{J_0}|\frac{\Theta _y} J|^{p-2}\frac {\Theta _{yy}^2}Jdy\\
&&+C(\eta)\left(\|\xi_r^\prime\|_{L^\infty}(\sup\limits_{y\in{\mathbb R}}\varrho_0)^{-\frac {2\alpha+3}{4}}\Big(\frac{1}{\underline J}\Big)\right)^{\frac {4q}{q+3}},\\
&|{\mathbb W}_{12}|&=\Big|-2\int|\frac{v_y}J|^{q}J\frac{\varrho_0^{\alpha}}{J_0}\Theta_y\xi_r\xi_r^{\prime}dy\Big|\\
&&\leqslant \Big(\int\frac{\varrho_0^{\alpha}}{J_0}|\frac{v_y} J|^qJdy+\int\varrho_0^{\alpha+1}\Theta_y^2dy\Big)^{\frac{2q}{q-1}}+\eta\int\frac{\varrho_0^{\alpha}}{J_0}|\frac{v_y}{J}|^{q-2}\frac {v_{yy}^2}Jdy\\
&&+C(\eta)\left({\overline J}^\frac{p-1}{p+2}(\sup\limits_{y\in{\mathbb R}}\varrho_0)^{-\frac {2\alpha+1}{p+2}}\right)^{\frac {4q(q+2)}{3pq+q+p+3}}
   \end{eqnarray*}
hold for any given $\eta\in(0,1).$ Thus, one arrives that
{\small\begin{eqnarray}\label{5dlg-E20}
&&\frac d {dt}\int\varrho_0^{\alpha+1}\Theta_y^2\xi_r^2dy+\int\frac{\varrho_0^{\alpha}}{J_0}|\frac{\Theta_y} J|^{p-2}\frac {\Theta_{yy}^2}J\xi_r^2dy\\
&&\leqslant \left(\int\varrho_0^{\alpha+1}v_y^2dy+\int\frac{\varrho_0^\alpha}{J_0}|\frac{v_y}{J}|^qJdy+\int\frac{\varrho_0^{\alpha+2}}{J_0}|\frac{\Theta_y} J|^pJdy+\int\varrho_0^{\alpha+1}\Theta_y^2dy +\int\varrho_0^{\alpha}J_y^2dy+\int\varrho_0^{\alpha+2}\Theta^2dy\right)^{\frac{2q} {q-1}}\nonumber\\
&&+\eta\int\varrho_0^{\alpha+3}\Theta_t^2dy +\eta\int\frac{\varrho_0^{\alpha}}{J_0}|\frac{v_y}{J}|^{q-2}\frac {v_{yy}^2}Jdy+\eta\int\frac{\varrho_0^{\alpha}}{J_0}|\frac{\Theta_y} J|^{p-2}\frac {\Theta_{yy}^2}Jdy\nonumber\\
&&+C(\eta)\|\varrho_0^{-1}\xi_r^\prime\|^\frac{4q(q+2)}{q+1}\|\varrho_0^{-\frac12}\xi_r^\prime\|_{L^\infty}^{\frac {4q(q+2)}{9q-4pq+4p+3}}\left((\sup\limits_{y\in{\mathbb R}}\varrho_0)^{-8\alpha} \Big(\frac1 {\underline J}\Big)^{8}\overline{J}^{8}\|(\varrho_0^\alpha)_y\|_{L^\infty}^{8}\|(\varrho_0^\alpha)_y\|_{L^p}^{8}\right)\nonumber
\end{eqnarray}}
holds for any given $\eta\in(0,1).$

{\sl\textit{Step 5.}} Multiplying both sides of  (\ref{1dlg-E14}) by $\varrho_0^{\alpha}v_t\xi_r^3$ and integrating with respect to $y$, it runs that
\begin{eqnarray}\label{5dlg-E21}
&&\frac1 q\frac d {dt}\int\frac{\varrho_0^{\alpha}}{J_0}|\frac{v_y}{J}|^qJ\xi_r^3dy+\int\varrho_0^{\alpha+1}v_t^2\xi_r^3dy\nonumber\\
&&=-\int|\frac{v_y}{J}|^{q-2}\frac {v_y}J(\frac{\varrho_0^{\alpha}}{J_0})_yv_t\xi_r^3dy+\frac1 q\int \frac{\varrho_0^{\alpha}}{J_0}|\frac{v_y}{J}|^qJ_t\xi_r^3dy-\int\frac{\varrho_0^{\alpha}}{J_0}|\frac{v_y}{J}|^{q-2}\frac {1}{J^2}J_t{v_y^2}\xi_r^3dy\nonumber\\
&&-\int R(\varrho\Theta)_y\frac{\varrho_0^{\alpha}}{J_0} v_t\xi_r^3dy-3\int\frac{\varrho_0^{\alpha}}{J_0}|\frac{v_y}{J}|^{q-2}\frac {v_y}Jv_t\xi_r^2\xi_r^{\prime}dy\nonumber\\
&&:=\sum_{i=1}^5{\mathbb V}_i.
    \end{eqnarray}
The right hand of (\ref{5dlg-E21}) is estimated as follows
\begin{eqnarray*}
&|{\mathbb V}_1|&=\Big|-\int|\frac{v_y}{J}|^{q-2}\frac {v_y}J(\frac{\varrho_0^{\alpha}}{J_0})_yv_t\xi_r^3dy\Big|\\
&&\leqslant \epsilon\int\varrho_0^{\alpha+1}v_t^2\xi_r^3dy+\eta\int\frac{\varrho_0^\alpha}{J_0}|\frac{v_y}{J}|^qJdy
\\
&&+C(\eta)\left((\sup\limits_{y\in{\mathbb R}}\varrho_0)^{-\frac{(3q-2)\alpha+q}{2q}}
\Big(\frac{1}{\underline J}\Big)^{q-1}\|(\varrho_0^{\alpha})_y\|_{L^{\frac{2q}{2-q}}}\right)^{\frac{2q}{2-q}}\\
&&+C(\epsilon,\eta)\left((\sup\limits_{y\in{\mathbb R}}\varrho_0)^{-\frac{(q-1)(2\alpha+1)}{2+q}}
\Big(\frac{1}{\underline J}\Big)^{q-1}{\overline J}^{\frac{(q-1)^2}{2+q}}\|\varrho_0^{\frac{\alpha}{2}}J_{0y}\|_{L^2}\right)^{\frac{2(q+2)}{3(2-q)}},\\
&|{\mathbb V}_2|&=\Big|\frac1 q\int \frac{\varrho_0^{\alpha}}{J_0}|\frac{v_y}{J}|^qJ_t\xi_r^3dy\Big|\\
&&\leqslant \left(\int\varrho_0^{\alpha+1}v_y^2dy+\int\frac{\varrho_0^\alpha}{J_0}|\frac{v_y}{J}|^qJdy\right)^{\frac{2q}{q-1}}
+\eta\int\frac{\varrho_0^\alpha}{J_0}|\frac{v_y}{J}|^{q-2}\frac {v_{yy}^2}{J}dy+C(\eta)\left(\overline{\varrho}^{-\frac{2\alpha+1}{q+2}}\overline{J}^{\frac{q-1}{q+2}}\right)^{\frac{2q(q+2)}{q^2+3}},\\
&|{\mathbb V}_3|&=|-\int\frac{\varrho_0^{\alpha}}{J_0}|\frac{v_y}{J}|^{q-2}\frac {1}{J^2}J_t{v_y^2}dy|\\
&&\leqslant \left(\int\varrho_0^{\alpha+1}v_y^2dy\right)^{\frac{2q}{q-1}}+\eta\int\frac{\varrho_0^\alpha}{J_0}|\frac{v_y}{J}|^qJdy
+\eta\int\frac{\varrho_0^\alpha}{J_0}|\frac{v_y}{J}|^{q-2}\frac {v_{yy}^2}{J}dy\\
&&+C(\eta)\left((\sup\limits_{y\in{\mathbb R}}\varrho_0)^{-\frac{2\alpha+1}{q+2}}\overline{J}^{\frac{q-1}{q+2}}\right)^{\frac{2q(q+2)}{q^2-q+1}},\\
&|{\mathbb V}_4|&=\Big|-\int R(\varrho\Theta)_y\frac{\varrho_0^{\alpha}}{J_0} v_t\xi_r^3dy\Big|\\
&&\leqslant R\left|\int\frac{1}{J}J_{0y}\frac{\varrho_0^{\alpha+1}}{J_0}\Theta v_t \xi_r^3dy\right|+R\left|\int \frac{1}{J}\varrho_0^{\alpha}\varrho_{0y}\Theta v_t \xi_r^3dy\right|
+R\left|\int \frac{1}{J^2}J_y\varrho_0^{\alpha+1}\Theta v_t \xi_r^3dy\right|\\
&&+R\left|\int \frac{1}{J}\varrho_0^{\alpha+1}\Theta_y v_t\xi_r^3dy\right|:=\sum\limits_{i=1}^4J_i,
   \end{eqnarray*}
where
\begin{eqnarray*}
&|J_1|&=R\left|\int\frac{1}{J}J_{0y}\frac{\varrho_0^{\alpha+1}}{J_0}\Theta v_t \xi_r^3dy\right|\\
&&\leqslant\left(\int\rho_0^{\alpha+2}\Theta^2dy+\int\rho_0^{\alpha+1}\Theta_y^2dy\right)^{\frac{2q}{q-1}}+\epsilon\int\rho_0^{\alpha+1}v_t^2\xi_r^3dy\\
&&+C(\epsilon)\left(\frac{1}{\underline{J}}
(\sup\limits_{y\in{\mathbb R}}\varrho_0)^{-\frac{2\alpha+1}{4}}\|\varrho_0^{\frac{\alpha}{2}}J_{0y}\|_{L^2}\right)^{\frac{4q}{q+1}},\\
&|J_2|&=R\left|\int \frac{1}{J}\varrho_0^{\alpha}\varrho_{0y}\Theta v_t \xi_r^3dy\right|\\
&&\leqslant\left(\int\rho_0^{\alpha+2}\Theta^2dy\right)^{\frac{2q}{q-1}}+\epsilon\int\rho_0^{\alpha+1}v_t^2\xi_r^3dy
+C(\epsilon)\left(\frac{1}{\underline{J}}\overline{J}(\sup\limits_{y\in{\mathbb R}}\varrho_0)^{-(2\alpha+1)}\|\left(\varrho_0^\alpha\right)_y\|_{L^\infty}\right)^{\frac{4q}{q+1}},\\
&|J_3|&=R\left|\int \frac{1}{J^2}J_y\varrho_0^{\alpha+1}\Theta v_t \xi_r^3dy\right|\\
&&\leqslant\left(\int\rho_0^{\alpha+2}\Theta^2dy+\int\rho_0^{\alpha+1}\Theta_y^2dy+\int\varrho_0^\alpha J_y^2dy\right)^{\frac{2q}{q-1}}+\epsilon\int\rho_0^{\alpha+1}v_t^2\xi_r^3dy\\
&&+C(\epsilon)\left(\left(\frac{1}{\underline{J}}\right)^2\overline{J}(\sup\limits_{y\in{\mathbb R}}\varrho_0)^{-\frac{2\alpha+1}{4}}\right)^{2q},\\
&|J_4|&=R\left|\int \frac{1}{J}\varrho_0^{\alpha+1}\Theta_y v_t\xi_r^3dy\right|\\
&&\leqslant\left(\int\rho_0^{\alpha+1}\Theta_y^2dy+\int\varrho_0^\alpha J_y^2dy\right)^{\frac{2q}{q-1}}+\epsilon\int\rho_0^{\alpha+1}v_t^2\xi_r^3dy
+\eta\int\varrho_0^\alpha|\frac{\Theta _y} J|^{p-2}\frac {\Theta _{yy}^2}Jdy\\
&&+C(\epsilon,\eta)\left(\left(\frac{1}{\underline{J}}\right)\overline{J}^{\frac{p}{2(p+2)}}
\overline{\varrho}^{-\frac{2\alpha-2p-3}{2(p+2)}}\right)^{\frac{4q(p+2)}{2pq+q+2p+5}}
    \end{eqnarray*}
hold for any given $\epsilon\in(0,1)$ and $\eta\in(0,1).$ Moreover,
\begin{eqnarray*}
&|{\mathbb V}_5|&=\Big|-3\int\frac{\varrho_0^{\alpha}}{J_0}|\frac{v_y}{J}|^{q-2}\frac {v_y}Jv_t\xi_r^2\xi_r^{\prime}dy\Big|\\
&&\leqslant \eta\int\frac{\varrho_0^\alpha}{J_0}|\frac{v_y}{J}|^qJdy+\eta\int\varrho_0^{\alpha+1}v_t^2\xi_r^3dy+C(\eta)\int\varrho_0^{-\frac{3\alpha q-q-2\alpha}{2-q}}|\xi_r^{\prime}|^{\frac{2q}{2-q}}dy
	\end{eqnarray*}
holds for any given $\epsilon\in(0,1)$ and $\eta\in(0,1)$. So,
\begin{eqnarray}\label{5dlg-E22}
&&\frac d {dt}\int\frac{\varrho_0^\alpha}{J_0}|\frac{v_y}{J}|^qJ\xi_r^3dy+\int\varrho_0^{\alpha+1}v_t^2\xi_r^3dy\\
&&\leqslant \left(\int\rho_0^{\alpha+2}\Theta^2dy+\int\rho_0^{\alpha+1}\Theta_y^2dy+\int\varrho_0^\alpha J_y^2dy+\int\varrho_0^{\alpha+1}v_y^2dy+\int\frac{\varrho_0^\alpha}{J_0}|\frac{v_y}{J}|^qJdy\right)^{\frac{2q}{q-1}}\nonumber\\
&&+\eta\int\frac{\varrho_0^\alpha}{J_0}|\frac{v_y}{J}|^qJdy+\eta\int\frac{\varrho_0^\alpha}{J_0}|\frac{v_y}{J}|^{q-2}\frac {v_{yy}^2}Jdy+C(\eta)\int\varrho_0^{-\frac{3\alpha q-q-2\alpha}{2-q}}|\xi_r^{\prime}|^{\frac{2q}{2-q}}dy\nonumber\\
&&+C(\eta)\left(\|(\varrho_0^\alpha)_y\|_{L^q}\|(\varrho_0^\alpha)_y\|_{L^\infty}\right)^{\frac{2q}{2-q}}\overline{J}^{\frac{4q}{q+1}}
\left(\frac{1}{\underline{J}}\right)^{\max\{\frac{2q}{2-q},4q\}}(\sup\limits_{y\in{\mathbb R}}\varrho_0)^{-\min\{\frac{(3q-2)\alpha+q}{2-q},\frac{4q(2\alpha+1)}{q+1}\}}\nonumber
    \end{eqnarray}
holds for any given $\eta\in(0,1)$.

{\sl\textit{Step 6.}} Multiplying equation (\ref{1dlg-E15}) by $\varrho_0^{\alpha+2}\Theta_t\xi_r^3$ and integrating the resultant over $\mathbb R$, one has that
\begin{eqnarray}\label{5dlg-E23}
&&\frac1 p\frac d {dt}\int\frac{\varrho_0^{\alpha+2}}{J_0}|\frac{\Theta_y} J|^pJ\xi_r^3dy+\int\varrho_0^{\alpha+3}\Theta_t^2\xi_r^3dy\nonumber\\
&&=-\int|\frac{\Theta_y} J|^{p-2}\frac {\Theta_y}J(\frac{\varrho_0^{\alpha+2}}{J_0})_y\Theta_t\xi_r^3dy+\frac1 p\int \frac{\varrho_0^{\alpha+2}}{J_0}|\frac{\Theta_y} J|^pJ_t\xi_r^3dy-\int\frac{\varrho_0^{\alpha+2}}{J_0}|\frac{\Theta_y} J|^{p-2}\frac {1}{J^2}J_t{\Theta_y^2}\xi_r^3dy\nonumber\\
&&-\int R\varrho\frac{\varrho_0^{\alpha+2}}{J_0}v_y\Theta\Theta_t\xi_r^3dy+\int\left|\frac{v_y} {J}\right|^qJ\frac{\varrho_0^{\alpha+2}}{J_0}\Theta_t\xi_r^3dy-3\int\frac{\varrho_0^{\alpha+2}}{J_0}|\frac{\Theta_y} J|^{p-2}\frac {\Theta_y}J\Theta_t\xi_r^2\xi_r^{\prime}dy\nonumber\\
&&:=\sum_{i=1}^6{\mathbb S}_i.
   \end{eqnarray}
Now, we estimate each term on the right hand of (\ref{5dlg-E23}) as follows
\begin{eqnarray*}
&|{\mathbb S}_1|&=\Big|-\int|\frac{\Theta_y} J|^{p-2}\frac {\Theta_y}J(\frac{\varrho_0^{\alpha+2}}{J_0})_y\Theta_t \xi_r^3dy\Big|\\
&&\leqslant\Big(\int\rho_0^{\alpha+1}\Theta_y^2dy+\int\frac{\varrho_0^{\alpha+2}}{J_0}|\frac{\Theta_y} J|^pJdy\Big)^{\frac{2q}{q-1}} +\epsilon\int\varrho_0^{\alpha+3}\Theta_t^2\xi_r^3dy
+\eta\int\varrho_0^\alpha|\frac{\Theta_y} J|^{p-2}\frac {\Theta_{yy}^2}Jdy\\
&&+C(\eta,\epsilon)\left((\sup\limits_{y\in{\mathbb R}}\varrho_0)^{-\frac{(3p-2)(\alpha+1)}{2p}}\|(\varrho_0^{\alpha})_y\|_{L^\infty}\right)^{\frac{4q}{q+1}}\\
&&+C(\eta,\epsilon)\left((\sup\limits_{y\in{\mathbb R}}\varrho_0)^{-\frac{(p-1)(2\alpha+1)}{2+p}}\overline{J}^{\frac{(p-1)^2}{p+2}}\left(\frac{1}{\underline{J}}\right)^{p-1}
\|\varrho_0^{\frac{\alpha-1}{2}}J_{0y}\|_{L^2}\right)^{\frac{2q(p+2)}{5q-pq+p-1}},\\
&|{\mathbb S}_2|&=\Big|\frac1 p\int \frac{\varrho_0^{\alpha+2}}{J_0}|\frac{\Theta_y} J|^pJ_t\xi_r^3dy\Big|\\
&&\leqslant\left(\int\frac{\varrho_0^{\alpha+2}}{J_0}|\frac{\Theta_y} J|^pJdy+\int\varrho_0^{\alpha+1}v_y^2dy\right)^{\frac{2q}{q-1}}
+\eta\int\frac{\varrho_0^\alpha}{J_0}|\frac{v_y}{J}|^{q-2}\frac {v_{yy}^2}Jdy\\
&&+C(\eta)\left((\sup\limits_{y\in{\mathbb R}}\varrho_0)^{-\frac{2\alpha+1}{q+2}}\overline{J}^{\frac{q-1}{q+2}}\left(\frac{1}{\underline{J}}\right)\right)^{\frac{2q(q+2)}{q^2+3}},\\
&|{\mathbb S}_3|&=\Big|-\int\frac{\varrho_0^{\alpha+2}}{J_0}|\frac{\Theta_y} J|^{p-2}\frac {1}{J^2}J_t{\Theta_y^2}\xi_r^3dy\Big|\\
&&\leqslant\left(\int\frac{\varrho_0^{\alpha+2}}{J_0}|\frac{\Theta_y} J|^pJdy+\int\varrho_0^{\alpha+1}v_y^2dy\right)^{\frac{2q}{q-1}}
+\eta\int\frac{\varrho_0^\alpha}{J_0}|\frac{v_y}{J}|^{q-2}\frac {v_{yy}^2}Jdy\\
&&+C(\eta)\left((\sup\limits_{y\in{\mathbb R}}\varrho_0)^{-\frac{2\alpha+1}{q+2}}\overline{J}^{\frac{q-1}{q+2}}\left(\frac{1}{\underline{J}}\right)\right)^{\frac{2q(q+2)}{q^2+3}},\\
&|{\mathbb S}_4|&=\Big|-\int R\varrho\frac{\varrho_0^{\alpha+2}}{J_0}v_y\Theta\Theta_t\xi_r^3dy\Big|\\
&&\leqslant\left(\int\varrho_0^{\alpha+2}\Theta^2dy+\int\varrho_0^{\alpha+1}v_y^2dy\right)^{\frac{2q}{q-1}}
+\eta\int\frac{\varrho_0^\alpha}{J_0}|\frac{v_y}{J}|^{q-2}\frac {v_{yy}^2}Jdy+\epsilon\int\varrho_0^{\alpha+3}\Theta_t^2\xi_r^3dy\\
&&+C(\eta,\epsilon)\left((\sup\limits_{y\in{\mathbb R}}\varrho_0)^{-\frac{4\alpha-q}{q+2}}\Big(\frac1 {\underline J}\Big)\overline{J}^{\frac{2q+1}{q+2}}\right)^{\frac{4q(q+2)}{q^2-2q+3}},\\
&|{\mathbb S}_5|&=\Big|\int|\frac{v_y}J|^{q}J\frac{\varrho_0^{\alpha+2}}{J_0}\Theta_t\xi_r^3dy\Big|\\
&&\leqslant \left( \int\varrho_0^{\alpha+1}v_y^2dy\right)^{\frac{2q}{q-1}}+\eta\int\frac{\varrho_0^\alpha}{J_0}|\frac{v_y}{J}|^{q-2}\frac {v_{yy}^2}Jdy+\epsilon\int\varrho_0^{\alpha+3}\Theta_t^2\xi_r^3dy\\
&&+C(\eta,\epsilon)\left(\Big(\frac1 {\underline J}\Big)^{\frac{5-q}{q+2}}(\sup\limits_{y\in{\mathbb R}}\varrho_0)^{-\frac{(q-1)(2\alpha+1)}{q+2}}\right)^{\frac{4(q+2)}{11-5q}}\\
 &|{\mathbb S}_6|&=\Big|-3\int\frac{\varrho_0^{\alpha+2}}{J_0}|\frac{\Theta_y} J|^{p-2}\frac {\Theta_y}J\Theta_t\xi_r^2\xi_r^{\prime}dy\Big|\\
&&\leqslant\eta\int\varrho_0^{\alpha+3}\Theta_t^2\xi_r^3dy+\eta\int\frac{\varrho_0^{\alpha+1}}{J_0}|\frac{\Theta_y} J|^pJdy+C(\eta)\int\varrho_0^{\frac{2\alpha-p\alpha+2}{2-p}}|\xi_r^{\prime}|^{\frac{2p}{2-p}}dy
    \end{eqnarray*}
hold for any given $\eta\in(0,1).$ It is easy to get that
\begin{eqnarray}\label{5dlg-E24}
&&\frac d {dt}\int\frac{\varrho_0^{\alpha+2}}{J_0}|\frac{\Theta_y} J|^pJ\xi_r^3dy+\int\varrho_0^{\alpha+3}\Theta_t^2\xi_r^3dy\nonumber\\
&&\leqslant \Big(\int\rho_0^{\alpha+1}\Theta_y^2dy+\int\varrho_0^{\alpha+2}\Theta^2dy+\int\frac{\varrho_0^{\alpha+2}}{J_0}|\frac{\Theta_y} J|^pJdy +\int\varrho_0^{\alpha+1}v_y^2dy\Big)^{\frac{2q}{q-1}}\nonumber\\
&&+\eta\int\frac{\varrho_0^{\alpha+1}}{J_0}|\frac{\Theta_y} J|^pJdy+\eta\int\frac{\varrho_0^\alpha}{J_0}|\frac{v_y}{J}|^{q-2}\frac {v_{yy}^2}Jdy+\eta\int\varrho_0^\alpha|\frac{\Theta_y} J|^{p-2}\frac {\Theta_{yy}^2}Jdy\nonumber\\
&&+C(\eta)(\sup\limits_{y\in{\mathbb R}}\varrho_0)^{-8\alpha}\overline{J}^{12}\left(\frac{1}{\underline{J}}\right)^{12}\|(\varrho_0^{\alpha})_y\|_{L^\infty}^{12}
+C(\eta)\int\varrho_0^{\frac{2\alpha-p\alpha+2}{2-p}}|\xi_r^{\prime}|^{\frac{2p}{2-p}}dy
   \end{eqnarray}
holds for any fixed $\eta\in (0,1).$

{\sl\textit{Step 7.}} Differentiating (\ref{1dlg-E13}) by with respect to $y,$ multiplying the above differential equation by $\varrho_0^{\alpha}J_y\xi_r^2$ and integrating the resultant over $\mathbb R$, one deduces that
\begin{eqnarray}\label{5dlg-E25}
&&\frac1 2\frac d {dt}\int\varrho_0^{\alpha} J_y^2\xi_r^2dy=\int\varrho_0^{\alpha}{J_y}v_{yy}\xi_r^2dy\nonumber\\
&&\leqslant \Big(\int\varrho_0^{\alpha}J_y^2dy+\int\varrho_0^{\alpha+1}v_y^2dy\Big)^{\frac{2q}{q-1}}+\eta\int\frac{\varrho_0^\alpha}{J_0}|\frac{v_y}{J}|^{q-2}\frac {v_{yy}^2}Jdy\nonumber\\
&&+C(\eta)\left((\sup\limits_{y\in{\mathbb R}}\varrho_0)^{-\frac{(3-q)(2\alpha+1)}{2(q+2)}}{\overline J}^{\frac{6-q}{2(q+2)}}\right)^{\frac{q(q+2)}{q^2-3q+4}}
 \end{eqnarray}
holds for any given $\eta\in (0,1).$

Now, adding all obtained estimates in Step 1-Step 7, one arrives at
\begin{eqnarray}\label{5dlg-E26}
&&\frac d {dt}\int\varrho_0^{\alpha}\xi_r^2\Big({\varrho_0}v^2\xi_r+{\varrho_0}\Theta\xi_r+\varrho_0^2\Theta^2\xi_r+{\varrho_0}v_y^2
+{\varrho_0}\Theta_y^2+\frac{1}{J_0}|\frac{v_y}{J}|^qJ\xi_r+\frac{\varrho_0^2}{J_0}|\frac{\Theta_y} J|^pJ\xi_r+ J_y^2\Big)dy\nonumber\\
&&+\int\varrho_0^\alpha\xi_r^2\Big(\frac{1}{J_0}|\frac{v_y}{J}|^qJ\xi_r+\frac{\varrho_0}{J_0}|\frac{\Theta_y} J|^pJ\xi_r+\frac{1}{J_0}|\frac{v_y}{J}|^{q-2}\frac{ v_{yy}^2}J+\frac{1}{J_0}|\frac{\Theta_y} J|^{p-2}\frac {\Theta_{yy}^2}J+\varrho_0^3\Theta_t^2\xi_r+{\varrho_0}v_t^2\xi_r\Big)dy\nonumber\\
&&\leqslant\bigg(\int\varrho_0^{\alpha}\Big({\varrho_0}v^2\xi_r+{\varrho_0}\Theta\xi_r+\varrho_0^2\Theta^2\xi_r+{\varrho_0}v_y^2
+{\varrho_0}\Theta_y^2+\frac{1}{J_0}|\frac{v_y}{J}|^qJ\xi_r+\frac{\varrho_0^2}{J_0}|\frac{\Theta_y} J|^pJ\xi_r+ J_y^2\Big)dy\bigg)^{\frac{2q}{q-1}}\nonumber\\
&&+\eta\int\varrho_0^\alpha\Big(\frac{1}{J_0}|\frac{v_y}{J}|^qJ\xi_r+\frac{\varrho_0}{J_0}|\frac{\Theta_y} J|^pJ\xi_r+\frac{1}{J_0}|\frac{v_y}{J}|^{q-2}\frac{ v_{yy}^2}J+\frac{1}{J_0}|\frac{\Theta_y} J|^{p-2}\frac {\Theta_{yy}^2}J+\varrho_0^3\Theta_t^2\xi_r+{\varrho_0}v_t^2\xi_r\Big)dy\nonumber\\
&&+C(\eta)\Big(\int\varrho_0^\alpha|\xi_r^{\prime}|^2dy\Big)^\frac{2q(q+2)}{5-q}
\|\xi_r^\prime\|_{L^\infty}^\frac{4q(q+2)}{5-q}\|\varrho_0^{-1}\xi_r^\prime\|_{L^\infty}^\frac{4q(q+2)}{q+1}{\overline J}^{12}\Big(\frac1{\underline J}\Big)^{\frac{12q}{2-q}}(\sup\limits_{y\in{\mathbb R}}\varrho_0)^{-\min\{\frac{(3q-2)\alpha+q}{2-q},\frac{10\alpha}{q-1}\}}
\nonumber\\
&&+C(\eta)\left(\int\left(\left(\varrho_0^{\frac{(2-p)\alpha-4(p-1)}{2-p}}+\varrho_0^{\frac{2\alpha-p\alpha+2}{2-p}}\right)|\xi_r^\prime|^\frac{2p}{2-p}+\varrho_0^{-\frac{3\alpha q-q-2\alpha}{2-q}}|\xi_r^{\prime}|^{\frac{2q}{2-q}}\right)dy\right)
 \end{eqnarray}
holds for any given $\eta\in (0,1).$ Note that $\varrho_0(y)\geqslant \frac {A_0} {(1+|y|)^{l}}~(\forall y \in \mathbb R)$
and
$$|\xi_r^{\prime}(y)|\leqslant\frac{\|\xi^{\prime}\|_{L^\infty}}{|y|}\leqslant{\frac{4\|\xi^{\prime}\|_{L^\infty}}{A_0^{\frac{1}{l}}}}\varrho_0(y)^{\frac{1}{l}}\quad(\forall |y|\geqslant1).
$$
for some $l>1.$ Letting  $r\to\infty$ and taking small enough $\eta,$ one deduces from (\ref{5dlg-E26}) that
\begin{eqnarray}\label{5dlg-E26-1}
&&\frac d {dt}\int\varrho_0^{\alpha}\Big({\varrho_0}v^2+{\varrho_0}\Theta+\varrho_0^2\Theta^2+{\varrho_0}v_y^2
+{\varrho_0}\Theta_y^2+\frac{1}{J_0}|\frac{v_y}{J}|^qJ+\frac{\varrho_0^2}{J_0}|\frac{\Theta_y} J|^pJ+ J_y^2\Big)
dy\nonumber\\
&&+\int\varrho_0^\alpha\Big(|\frac{v_y}{J}|^qJ+{\varrho_0}|\frac{\Theta_y} J|^pJ+|\frac{v_y}{J}|^{q-2}\frac{ v_{yy}^2}J+|\frac{\Theta_y} J|^{p-2}\frac {\Theta_{yy}^2}J+\varrho_0^3\Theta_t^2+{\varrho_0}v_t^2\Big)dy\nonumber\\
&&\leqslant\bigg(\int\varrho_0^{\alpha}\Big({\varrho_0}v^2+{\varrho_0}\Theta+\varrho_0^2\Theta^2+{\varrho_0}v_y^2
+{\varrho_0}\Theta_y^2+\frac{1}{J_0}|\frac{v_y}{J}|^qJ+\frac{\varrho_0^2}{J_0}|\frac{\Theta_y} J|^pJ+ J_y^2\Big)
dy\bigg)^{\frac{2q}{q-1}}\nonumber\\
&&+C{\overline J}^{12}\Big(\frac1{\underline J}\Big)^{\frac{12q}{2-q}}(\sup\limits_{y\in{\mathbb R}}\varrho_0)^{-\min\{\frac{(3q-2)\alpha+q}{2-q},\frac{10\alpha}{q-1}\}}
\left(\|(\varrho_0^\alpha)_y\|_{L^p}\|(\varrho_0^\alpha)_y\|_{L^\infty}\right)^{\frac{16q}{2-q}}, \end{eqnarray}
where the constant $l$ is selected such that
\begin{equation}\label{5dlg-E27}
0<l<\min\{1,-\frac{3p-2}{(2-p)\alpha+3}\}.
    \end{equation}
Thus, (\ref{5dlg-E11}) follows from (\ref{5dlg-E26-1}) directly according to (\ref{5dlg-E7}) and (\ref{5dlg-E8}).
\end{proof}

Now, we are in the position to prove Theorem \ref{1dlg-thm2}.

\textbf{Proof of Theorem \ref{1dlg-thm2}} According to Theorem \ref{1dlg-thm1}, the problem (\ref{1dlg-E16})-(\ref{1dlg-E17}) admits a strong solution $(J,\varrho,v,\Theta)$ on $\mathbb R \times [0,\widetilde{T_0}]$ for some positive time $\widetilde{T_0}$. In accordance with Theorem \ref{1dlg-thm1} and Proposition \ref{5dlg-P5.2}, this local strong solution can be extended to the maximal time of existence $T_1$, which is given by (\ref{5dlg-E31-1}). So, the quaternion $(J,\varrho,v,\Theta)$ is a strong solution to the problem (\ref{1dlg-E16})-(\ref{1dlg-E17}) on $\mathbb R \times [0,T_1]$. Set $\delta_1(t)=\inf\limits_{y\in{\mathbb R}}J(y,t)$ on $[0,T_1]$. With the help of Proposition \ref{5dlg-P5.2} and Lemma \ref{2dlg-L2.2}, one finds that
\begin{eqnarray}\label{5dlg-E29}
&&\bigg(\int\varrho_0^{\alpha}\Big({\varrho_0}v^2+{\varrho_0}\Theta+\varrho_0^2\Theta^2+{\varrho_0}v_y^2
+{\varrho_0}\Theta_y^2+\frac{1}{J_0}|\frac{v_y}{J}|^qJ+\frac{\varrho_0^2}{J_0}|\frac{\Theta_y} J|^pJ+ J_y^2\Big)dy\bigg)(t)\nonumber\\
&&\leqslant CH_1(t)\bigg(1-\frac{q+1}{q-1}H_1^{\frac{q+1}{q-1}}(t)t\bigg)^{-\frac{q-1}{q+1}},
    \end{eqnarray}
\begin{eqnarray}\label{5dlg-E30}
&&\sup\limits_{s\in[0,t]}\bigg(\int\varrho_0^{\alpha}\Big({\varrho_0}v^2+{\varrho_0}\Theta+\varrho_0^2\Theta^2+{\varrho_0}v_y^2
+{\varrho_0}\Theta_y^2+\frac{1}{J_0}|\frac{v_y}{J}|^qJ+\frac{\varrho_0^2}{J_0}|\frac{\Theta_y} J|^pJ+ J_y^2\Big)dy\bigg)(s)\nonumber\\
&&+\int_0^t\int\varrho_0^\alpha\Big(\frac{1}{J_0}|\frac{v_y}{J}|^qJ+\frac{\varrho_0}{J_0}|\frac{\Theta_y} J|^pJ+\frac{1}{J_0}|\frac{v_y}{J}|^{q-2}\frac{ v_{yy}^2}J+\frac{1}{J_0}|\frac{\Theta_y} J|^{p-2}\frac {\Theta_{yy}^2}J+\varrho_0^3\Theta_t^2+{\varrho_0}v_t^2\Big)dyds\nonumber\\
&&\leqslant C\bigg(H_1(t)+\int_0^tH_1^{\frac{2q} {q-1}}(s)\bigg(1-\frac{q+1}{q-1}H_1^{\frac{q+1}{q-1}}(s)s\bigg)^{-\frac{2q}{q+1}}ds\bigg)
    \end{eqnarray}
hold for any $t\in[0,{T_1}]$, where
\begin{eqnarray*}
&&H_1(t)=\int\varrho_0^{\alpha}\Big({\varrho_0}v_0^2+{\varrho_0}\Theta_0+\varrho_0^{2}\Theta_0^2+{\varrho_0}v_{0y}^2
+{\varrho_0}\Theta_{0y}^2+|\frac{v_{0y}} {J_0}|^q+\varrho_0^{2}|\frac{\Theta_{0y}} {J_0}|^p+J_{0y}^2\Big)dy\\
&&+C({\sup\limits_{y\in\mathbb R}\varrho_0})^{-\min\{\frac{(3q-2)\alpha+q}{2-q},\frac{10\alpha}{q-1}\}}
\left(\|(\varrho_0^\alpha)_y\|_{L^p}\|(\varrho_0^\alpha)_y\|_{L^\infty}\right)^{\frac{16q}{2-q}}\int_0^t\left(\delta(s)\right)^{-\frac{24}{2-q}}ds
    \end{eqnarray*}
holds for any $t\in(0,T_1]$, where the positive constant $C$ is dependent on $p$, $q$ and $A_0$. In fact, the time $T_1$ satisfies the condition
\begin{equation}\label{5dlg-E31-1}
\frac{q+1}{q-1}H_1^{\frac{q+1}{q-1}}({T_1}){T_1}<1.
    \end{equation}
Furthermore, one takes a similar argument in Proposition \ref{3dlg-P3.3} to arrive that
\begin{eqnarray*}
&&\sup\limits_{y\in\mathbb R}J(y,t)\leqslant2M_1(t)\exp\bigg({(\sup\limits_{y\in\mathbb R}\varrho_0)}^{-\frac{\alpha}{q}}\frac{(q-1)t}{q}\bigg),\\
&&\inf\limits_{y\in\mathbb R}J(y,t)\geqslant(\inf\limits_{y\in\mathbb R}J_0)\bigg(1+C(\inf\limits_{y\in\mathbb R}J_0)\Big[q(M_1(t)-\sup\limits_{y\in\mathbb R}J_0)\Big]^{\frac1 q}\Big(\int_0^tF_1(s)ds\Big)^{\frac{q-1} q}\bigg)^{-1},
    \end{eqnarray*}
on $[0,T_1]$, where
\begin{eqnarray*}
&&M_1(t)=\frac{C}{q}\bigg(q\sup\limits_{y\in\mathbb R}J_0+H_1(t)+\int_0^tH_1^{\frac{2q}{q-1}}(s)\Big(1-\frac{q+1}{q-1}H_1^{\frac{q+1}{q-1}}(s)s\Big)^{-\frac{2q}{q-1}}ds\bigg),\\
&&F_1(t)={(\sup\limits_{y\in\mathbb R}\varrho_0)}^{-\frac{\alpha}{q-1}}M_1^{-\frac{q+1}{q-1}}(t)\exp\bigg(-\frac{q+1}{q}{(\sup\limits_{y\in\mathbb R}\varrho_0)}^{-\frac{\alpha}{q}}t\bigg).
    \end{eqnarray*}
Moreover, the initial date $J_0$ has lower bound and upper bound. It is deduced from (\ref{5dlg-E7})-(\ref{5dlg-E10}) that
\begin{eqnarray*}
&&\inf\limits_{y\in(-r,r)}\varrho(y,t)>C\delta_1(t)\ (\forall r\in(0,\infty)),\quad\varrho(y,t)\leqslant C\delta_1^{-1}(t)~(\forall y\in\mathbb R),\\
&&\inf\limits_{y\in\mathbb R}J(y,t)>C\delta_1(t),\quad J(y,t)\leqslant C\delta_1^{-1}(t)~(\forall y\in\mathbb R),\\
&&\varrho(y,t)\geqslant\frac{C\delta_1(t)}{(1+|y|)^{l}}~~(\forall y\in\mathbb R)
    \end{eqnarray*}
hold for any $t\in[0,T_1]$, where $C$ is a positive constant depending on $p, q, H_0$ and $A_0$ with
\begin{equation*}
H_0=\int\varrho_0^{\alpha}\Big({\varrho_0}v_0^2+{\varrho_0}\Theta_0+\varrho_0^{2}\Theta_0^2+{\varrho_0}v_{0y}^2
+{\varrho_0}\Theta_{0y}^2+|\frac{v_{0y}} {J_0}|^q+\varrho_0^{2}|\frac{\Theta_{0y}} {J_0}|^p+J_{0y}^2\Big)dy.
    \end{equation*}
Note that
\begin{equation}
\frac{1}{\inf\limits_{y\in\mathbb R}J(y,t)}+\sup\limits_{y\in\mathbb R}J(y,t)+\|J^{\prime}(\cdot,t)\|_{L^2}+\|v(\cdot,t)\|_{W^{1,q}}+\|\Theta(\cdot,t)\|_{W^{1,p}}
\notag
\end{equation}
is finite for any given $t\in[0,T_1],$
\begin{eqnarray*}
&&\int((\varrho^\alpha)_y)^pdy\leqslant C\delta_1^{-p\alpha}(t)\Big(\int\varrho_0^\alpha J_y^2dy+\int((\varrho_0^\alpha)_y)^pdy\Big),\\
&&\|(\varrho^\alpha)_y\|_{L^\infty}\leqslant C\delta_1^{-\alpha}(t)\|(\varrho_0^\alpha)_y\|_{L^\infty}
    \end{eqnarray*}
hold for any $t\in(0,T_1]$. Set
\begin{eqnarray*}
J_1(y)=J(y,T_1),\quad\varrho_1(y)=\varrho(y,T_1),\quad v_1(y)=v(y,T_1),\quad\Theta_1(y)=\Theta(y,T_1).
    \end{eqnarray*}
Obviously, $(J,\varrho,v,\Theta)|_{t=T_1}=(J_1,\varrho_1,v_1,\Theta_1)$ fulfills the conditions on the initial data stated in Theorem \ref{1dlg-thm1}. Taking $T_1$ as the initial time, one finds that there exists a strong solution $(J,\varrho,v,\Theta)$ to the system (\ref{1dlg-E16})-(\ref{1dlg-E17}) on $\mathbb R\times[T_1,T_2]$ with the help of Theorem \ref{1dlg-thm1} and Proposition \ref{5dlg-P5.2}, where $T_2$ is given in (\ref{5dlg-E33}). Also, one obtains that
\begin{eqnarray}\label{5dlg-E32}
&&\frac d {dt}\bigg(\int\varrho_1^\alpha\Big({\varrho_1}v^2+{\varrho_1}\Theta
+\varrho_1^2\Theta^2+{\varrho_1}v_y^2+{\varrho_1}\Theta_y^2+\frac{1}{J_1}|\frac{v_y}{J}|^qJ
+\frac{\varrho_1^2}{J_1}|\frac{\Theta_y} J|^pJ+J_y^2\Big)dy\bigg)(t)\nonumber\\
&&+\int\varrho_1^\alpha\Big(\frac{1}{J_1}|\frac{v_y}{J}|^qJ+\frac{\varrho_1}{J_1}|\frac{\Theta_y} J|^pJ+\frac{1}{J_1}|\frac{v_y}{J}|^{q-2}\frac { v_{yy}^2}J+\frac{1}{J_1}|\frac{\Theta_y} J|^{p-2}\frac {\Theta_{yy}^2}J+{\varrho_1}v_t^2+\varrho_1^3\Theta_t^2\Big)dy\nonumber\\
&&\leqslant\bigg(\int\varrho_1^\alpha\Big({\varrho_1}v^2+{\varrho_1}\Theta
+\varrho_1^2\Theta^2+{\varrho_1}v_y^2+{\varrho_1}\Theta_y^2+\frac{1}{J_1}|\frac{v_y}{J}|^qJ
+\frac{\varrho_1^2}{J_1}|\frac{\Theta_y} J|^pJ+J_y^2\Big)dy\bigg)^{\frac{2q}{q-1}}\nonumber\\
&&+C\left(\delta_2(t)\right)^{-\frac{24}{2-q}}({\sup\limits_{y\in\mathbb R}{\varrho_1}})^{-\min\{\frac{(3q-2)\alpha+q}{2-q},\frac{10\alpha}{q-1}\}}
\left(\|(\varrho_1^\alpha)_y\|_{L^p}\|(\varrho_1^\alpha)_y\|_{L^\infty}\right)^{\frac{16q}{2-q}}
    \end{eqnarray}
holds for any $t\in(T_1,T_2]$, where $\delta_2(t)=\inf\limits_{y\in{\mathbb R}}J(y,t)$ on $[T_1,T_2],$
\begin{eqnarray*}
&&M_2(t)=\frac{C}{q}\bigg(q\sup\limits_{y\in\mathbb R}J_1+H_2(t)+\int_{T_1}^tH_2^{\frac{2q}{q-1}}(s)\Big(1-\frac{q+1}{q-1}H_2^{\frac{q+1}{q-1}}(s)s\Big)^{-\frac{2q}{q-1}}ds\bigg),\\
&&F_2(t)={(\sup\limits_{y\in\mathbb R}\varrho_1)}^{-\frac{\alpha}{q-1}}M_2^{-\frac{q+1}{q-1}}(t)\exp\bigg(-\frac{q+1}{q}{(\sup\limits_{y\in\mathbb R}\varrho_1)}^{-\frac{\alpha}{q}}t\bigg),\\
&&\sup\limits_{y\in\mathbb R}J(y,t)\leqslant2M_2(t)\exp\bigg({(\sup\limits_{y\in\mathbb R}\varrho_1)}^{-\frac{\alpha}{q}}\frac{(q-1)t}{q}\bigg),\\
&&H_2(t)=\int\varrho_1^\alpha\Big({\varrho_1}v_1^2+{\varrho_1}\Theta_1+\varrho_1^2\Theta_1^2
+\varrho_1v_{1y}^2+{\varrho_1}\Theta_{1y}^2+|\frac{v_{1y}} {J_1}|^q+\varrho_1^2|\frac{\Theta_{1y}} {J_1}|^p+J_{1y}^2\Big)dy\\
&&~~~~~~~~~~~+C({\sup\limits_{y\in\mathbb R}{\varrho_1}})^{-\min\{\frac{(3q-2)\alpha+q}{2-q},\frac{10\alpha}{q-1}\}}
\left(\|(\varrho_1^\alpha)_y\|_{L^p}\|(\varrho_1^\alpha)_y\|_{L^\infty}\right)^{\frac{16q}{2-q}}
\int^t_{T_1}\left(\delta_2(s)\right)^{-\frac{24}{2-q}}ds
    \end{eqnarray*}
on $[T_1,T_2]$. In fact, the functions $H_2(t), M_2(t)$ and $\delta_2^{-1}(t)$ are positive and non-decreasing on the interval $[T_1,T_2].$  According to Lemma \ref{2dlg-L2.2} and (\ref{5dlg-E32}), we obtain that
\begin{eqnarray*}
&&\bigg(\int\varrho_1^\alpha\Big({\varrho_1}v^2+{\varrho_1}\Theta
+\varrho_1^2\Theta^2+{\varrho_1}v_y^2+{\varrho_1}\Theta_y^2+\frac{1}{J_1}|\frac{v_y}{J}|^qJ
+\frac{\varrho_1^2}{J_1}|\frac{\Theta_y} J|^pJ+J_y^2\Big)dy\bigg)(t)\\
&&\leqslant CH_2(t)\bigg(1-\frac{q+1}{q-1}H_2^{\frac{q+1}{q-1}}(t)(t-{T_1})\bigg)^{-\frac{q-1}{q+1}}.
    \end{eqnarray*}
Furthermore,
\begin{eqnarray*}
&&\sup\limits_{s\in[T_1,t]}\bigg(\int\varrho_1^\alpha\Big({\varrho_1}v^2+{\varrho_1}\Theta
+\varrho_1^2\Theta^2+{\varrho_1}v_y^2+{\varrho_1}\Theta_y^2+\frac{1}{J_1}|\frac{v_y}{J}|^qJ
+\frac{\varrho_1^2}{J_1}|\frac{\Theta_y} J|^pJ+J_y^2\Big)dy\bigg)(s)\\
&&+\int_{T_1}^t\int\varrho_1^\alpha\Big(\frac{1}{J_1}|\frac{v_y}{J}|^qJ+\frac{\varrho_1}{J_1}|\frac{\Theta_y} J|^pJ+\frac{1}{J_1}|\frac{v_y}{J}|^{q-2}\frac { v_{yy}^2}J+\frac{1}{J_1}|\frac{\Theta_y} J|^{p-2}\frac {\Theta_{yy}^2}J+{\varrho_1}v_t^2+\varrho_1^3\Theta_t^2\Big)dyds\\
&&\leqslant C\bigg(H_2(t)+\int_{T_1}^tH_2^{\frac{2q} {q-1}}(s)\bigg(1-\frac{q+1}{q-1}H_2^{\frac{q+1}{q-1}}(s)(s-T_1)\bigg)^{-\frac{2q}{q+1}}ds\bigg)
    \end{eqnarray*}
holds for all $t\in[{T_1},{T_2}]$, where positive constant $T_2$ satisfies
\begin{equation}\label{5dlg-E33}
\frac{q+1}{q-1}H_2^{\frac{q+1}{q-1}}({T_2})(T_2-{T_1})<1
    \end{equation}
by Lemma \ref{2dlg-L2.2} and (\ref{5dlg-E32}). Under the definition of $H_2(t)$, there
exists some $t_2 > T_1$ (in fact $t_2 > T_2$) such that
\begin{equation}
\lim_{t\to t_2-}\frac{q+1}{q-1}H_2^{\frac{q+1}{q-1}}({t})(t-{T_1})=1
\notag
    \end{equation}
Now, we concentrate on the selection of $T_2$. One can get from (\ref{5dlg-E33}) that
\begin{equation}
\begin{aligned}
\delta_2(t)\geqslant&(\inf\limits_{y\in\mathbb R}J_1)\bigg[1+C(\inf\limits_{y\in\mathbb R}J_1)\Big[H_2(t)+H_2^{\frac{2q} {q-1}}(t)\Big(1-\frac{q+1}{q-1}H_2^{\frac{q+1}{q-1}}(t)(t-T_1)\Big)^{-\frac{2q}{q+1}}(t-T_1)\Big]^{\frac1 q}\\
&\cdot(t-T_1)^{\frac{q-1} q}\bigg]^{-1}.
\notag
\end{aligned}
    \end{equation}
The function
\begin{equation*}
\Big[H_2(t)+H_2^{\frac{2q} {q-1}}(t)\Big(1-\frac{q+1}{q-1}H_2^{\frac{q+1}{q-1}}(t)(t-T_1)\Big)^{-\frac{2q}{q+1}}(t-T_1)\Big]^{\frac1 q}(t-T_1)^{\frac{q-1} q}
    \end{equation*}
is strictly increasing on $[T_1,t_2)$. Assume that
\begin{equation}\label{5dlg-E34}
\lim_{t\to t_2-}\Big[H_2(t)+H_2^{\frac{2q} {q-1}}(t)\Big(1-\frac{q+1}{q-1}H_2^{\frac{q+1}{q-1}}(t)(t-T_1)\Big)^{-\frac{2q}{q+1}}(t-T_1)\Big]^{\frac1 q}(t-T_1)^{\frac{q-1} q}=\eta_0
    \end{equation}
for some positive constant $\eta_0$. Since
\begin{equation}
\begin{aligned}
&\lim_{t\to t_2-}\frac{q+1}{q-1}H_2^{\frac{q+1}{q-1}}(t)(t-T_1)=1,\\
&\lim_{t\to t_2-}\Big(1-\frac{q+1}{q-1}H_2^{\frac{q+1}{q-1}}(t)(t-T_1)\Big)^{-\frac{2q}{q+1}}=+\infty,
\notag
\end{aligned}
    \end{equation}
one deduces from (\ref{5dlg-E34}) that
\begin{equation}
\lim_{t\to t_2-}H_2(t)=0.
\notag
    \end{equation}
However,
\begin{equation*}
\lim_{t\to t_2-}\frac{q+1}{q-1}H_2^{\frac{q+1}{q-1}}(t)(t-T_1)=1.
    \end{equation*}
This contradiction implies that
\begin{equation}\label{5dlg-E35}
\lim_{t\to t_2-}\Big[H_2(t)+H_2^{\frac{2q} {q-1}}(t)\Big(1-\frac{q+1}{q-1}H_2^{\frac{q+1}{q-1}}(t)(t-T_1)\Big)^{-\frac{2q}{q+1}}(t-T_1)\Big]^{\frac1 q}(t-T_1)^{\frac{q-1} q}=+\infty.
    \end{equation}
Setting
\begin{equation}\label{5dlg-E36}
1-\frac{q+1}{q-1}H_2^{\frac{q+1}{q-1}}(t)(t-T_1)=\varepsilon,
    \end{equation}
one obtains from (\ref{5dlg-E36}) that
\begin{eqnarray*}
&&\lim_{\varepsilon\to0+}t(\varepsilon)=t_2{\small -},\quad\lim_{\varepsilon\to1-}t(\varepsilon)=T_1{\small +},\\
&&\lim_{\varepsilon\to0+}(t(\varepsilon)-T_1)^{\frac{q-1}{q+1}}\bigg(\Big(\frac{q-1}{q+1}\Big)^{\frac{q-1}{q+1}}(1-\varepsilon)^{\frac{q-1}{q+1}}+\Big(\frac{q-1}{q+1}\Big)^{\frac{2q}{q+1}}(1-\varepsilon)^{\frac{2q}{q+1}}\varepsilon^{-\frac{2q}{q-1}}\bigg)^{\frac1 q}=+\infty,\\
&&\lim_{\varepsilon\to1-}(t(\varepsilon)-T_1)^{\frac{q-1}{q+1}}\bigg(\Big(\frac{q-1}{q+1}\Big)^{\frac{q-1}{q+1}}(1-\varepsilon)^{\frac{q-1}{q+1}}+\Big(\frac{q-1}{q+1}\Big)^{\frac{2q}{q+1}}(1-\varepsilon)^{\frac{2q}{q+1}}\varepsilon^{-\frac{2q}{q-1}}\bigg)^{\frac1 q}=0,\\
&&\lim_{\varepsilon\to0+}\bigg(\Big(\frac{q-1}{q+1}\Big)^{\frac{q-1}{q+1}}(1-\varepsilon)^{\frac{q-1}{q+1}}+\Big(\frac{q-1}{q+1}\Big)^{\frac{2q}{q+1}}(1-\varepsilon)^{\frac{2q}{q+1}}\varepsilon^{-\frac{2q}{q-1}}\bigg)^{\frac1 q}=+\infty
    \end{eqnarray*}
and the functions $t(\varepsilon)$ and $\Big(\frac{q-1}{q+1}\Big)^{\frac{q-1}{q+1}}(1-\varepsilon)^{\frac{q-1}{q+1}}+\Big(\frac{q-1}{q+1}\Big)^{\frac{2q}{q+1}}(1-\varepsilon)^{\frac{2q}{q+1}}\varepsilon^{-\frac{2q}{q-1}}$ are strictly decreasing on $(0, 1).$

Next, we determine the maximal time step $t(\varepsilon_0)-T_1$ for some $\varepsilon_0\in(0,1)$, which is the key to prove Theorem \ref{1dlg-thm2}. For the convenience of discussion, we replace $\varepsilon$ by $\frac1 k$ and find that
\begin{eqnarray}
&&\lim_{\frac1 k\to0+}\bigg(t(\frac1 k)-T_1\bigg)^{\frac{q-1}{q+1}}\bigg(\Big(\frac{q-1}{q+1}\Big)^{\frac{q-1}{q+1}}\Big(1-\frac1 k\Big)^{\frac{q-1}{q+1}}+\Big(\frac{q-1}{q+1}\Big)^{\frac{2q}{q+1}}\Big(1-\frac1 k\Big)^{\frac{2q}{q+1}}\Big(\frac1 k\Big)^{-\frac{2q}{q-1}}\bigg)^{\frac1 q}\nonumber\\
&&~~~~~~~=+\infty,\label{5dlg-E37}\\
&&\lim_{\frac1 k\to0+}\bigg(\Big(\frac{q-1}{q+1}\Big)^{\frac{q-1}{q+1}}\Big(1-\frac1 k\Big)^{\frac{q-1}{q+1}}+\Big(\frac{q-1}{q+1}\Big)^{\frac{2q}{q+1}}\Big(1-\frac1 k\Big)^{\frac{2q}{q+1}}\Big(\frac1 k\Big)^{-\frac{2q}{q-1}}\bigg)^{\frac1 q}=+\infty.\label{5dlg-E38}
    \end{eqnarray}
Introduce the function
\begin{equation*}
g(k)\Big(\frac{q-1}{q+1}\Big)^{\frac{q-1}{q(q+1)}}\bigg(\Big(1-\frac1 k\Big)^{\frac{q-1}{q+1}}+\frac{q-1}{q+1}\Big(1-\frac1 k\Big)^{\frac{2q}{q+1}}\Big(\frac1 k\Big)^{-\frac{2q}{q-1}}\bigg)^{\frac1 q}.
    \end{equation*}
So,
\begin{equation}\label{5dlg-E39}
g(k)=\Big(\frac{q-1}{q+1}\Big)^{\frac{q-1}{q(q+1)}}\Big(1-\frac1 k\Big)^{\frac{q-1}{q(q+1)}}\bigg(1-\frac{q-1}{q+1}k^{\frac{q+1}{q-1}}(k-1)\bigg)^{\frac1 q}.
\end{equation}
Clearly, $\lim\limits_{k\to\infty}g(k)=+\infty$, $g(k)$ is increasing on $[1,+\infty),$ and (\ref{5dlg-E37}) is rewritten as
\begin{equation}\label{5dlg-E40}
\lim_{k\to\infty}\bigg(t(\frac1 k)-T_1\bigg)^{\frac{q-1}{q+1}}g(k)=+\infty.
    \end{equation}
For any fixed $k_1\in\mathbb Z^+$, it is deduced from (\ref{5dlg-E40}) that there exists $k_2\in\mathbb Z^+$ such that
\begin{equation}\label{5dlg-E41}
\bigg(t(\frac1 k)-T_1\bigg)^{\frac{q-1}{q+1}}g(k)>g(k_1),
    \end{equation}
when $k>k_1$. Especially, (\ref{5dlg-E41}) holds when $k>\max\{k_1,k_2\}$. Representing $k$ as $\eta^{\frac {(q-1)^2}{2(q+1)}} k_1$ with $\eta>1$, one finds that
\begin{equation}
\bigg(t(\frac{1} {\eta^{\frac {(q-1)^2}{2(q+1)}}k_1})-T_1\bigg)^{\frac{q-1}{q+1}}g(\eta^{\frac {(q-1)^2}{2(q+1)}} k_1)>g(k_1)
\notag
    \end{equation}
holds for any $\eta^{\frac {(q-1)^2}{2(q+1)}}>\eta_0^{\frac {(q-1)^2}{2(q+1)}}$, where $\eta_0^{\frac {(q-1)^2}{2(q+1)}}=\frac{\max\{k_1,k_2\}}{k_1}$.
That is, for any fixed $k_1\in\mathbb Z^+$, there exists $\eta_0>1$ such that
\begin{equation}\label{5dlg-E42}
\bigg(t(\frac{1} {\eta^{\frac {(q-1)^2}{2(q+1)}} k_1})-T_1\bigg)^{\frac{q-1}{q+1}}>\frac{g(k_1)}{g(\eta^{\frac {(q-1)^2}{2(q+1)}} k_1)}
    \end{equation}
holds for any $\eta>\eta_0$. Consider the right hand of (\ref{5dlg-E42}) and set
\begin{equation}\label{5dlg-E43}
\begin{aligned}
h(k,\eta)&\triangleq\frac{g(k)}{g(\eta^{\frac {(q-1)^2}{2(q+1)}} k)}\\
&=\Bigg(\frac{(k-1)^{\frac{q-1}{q+1}}+\frac{q-1}{q+1}k^{\frac{q+1}{q-1}}(k-1)^{\frac{2q}{q+1}}}{(k-\frac{1}{\eta^{\frac {(q-1)^2}{2(q+1)}}})^{\frac{q-1}{q+1}}+\frac{q-1}{q+1}(\eta^{\frac {(q-1)^2}{2(q+1)}})^{\frac{4q}{(q-1)(q+1)}}k^{\frac{q-1}{q+1}}(\eta^{{\frac {(q-1)^2}{2(q+1)}}}k-1)^{\frac{2q}{q+1}}}\Bigg)^{\frac1 q}.
\end{aligned}
    \end{equation}
Obviously, for any fixed $k\in\mathbb Z^+$, $h(k,\eta)<1$ for any $\eta>1$, $h(k,\eta)$ is continuous on $[1, +\infty)$ with respect to $\eta$, $h(k, 1)=1$ and $\lim\limits_{\eta\to\infty}h(k,\eta)=0$.

Due to (\ref{5dlg-E42}), there exists $2\widetilde\eta_2^{\frac {(q-1)^2}{2(q+1)}}\in\mathbb Z^+$ for some $\widetilde{\eta_2}>1$ such that
\begin{equation}\label{5dlg-E44}
\bigg(t\left(\frac{1} {2\widetilde\eta_2^{\frac {(q-1)^2}{2(q+1)}}}\right)-T_1\bigg)^{\frac{q-1}{q+1}}g(2\widetilde\eta_2^{\frac {(q-1)^2}{2(q+1)}})>g(2),
    \end{equation}
which implies that
\begin{equation}
t\left(\frac{1} {2\widetilde\eta_2^{\frac {(q-1)^2}{2(q+1)}}}\right)-T_1>g^{\frac{q+1}{q-1}}(2)g^{-\frac{q+1}{q-1}}(2\widetilde\eta_2^{\frac {(q-1)^2}{2(q+1)}})=h^{\frac{q+1}{q-1}}(2,\widetilde\eta_2).
\notag
    \end{equation}
The properties of $h(2,\eta)$, imply that there exists a $\eta_2>1$ such that
\begin{equation}
h^{\frac{q+1}{q-1}}(2,\eta_2)<h^{\frac{q+1}{q-1}}(2,\widetilde\eta_2).
\notag
    \end{equation}
In fact, $\eta_2>\widetilde\eta_2$, and $t\left(\frac{1} {2\eta_2^{\frac {(q-1)^2}{2(q+1)}}}\right)-T_1>t\left(\frac{1} {2\widetilde\eta_2^{\frac {(q-1)^2}{2(q+1)}}}\right)-T_1$. Therefore, $T_2$ can be regarded as
$$T_2=t\left(\frac{1} {2\eta_2^{\frac {(q-1)^2}{2(q+1)}}}\right)\mbox{ and }T_2-T_1>h^{\frac{q+1}{q-1}}(2,\eta_2)\mbox{ for some } \eta_2>1.$$
Using the iterative approach, we set
\begin{equation}
J_{l-1}(y)=J(y,T_{l-1}),\ \varrho_{l-1}(y)=\varrho(y,T_{l-1}),\  v_{l-1}(y)=v(y,T_{l-1}),\ \Theta_{l-1}(y)=\Theta(y,T_{l-1})
\notag
    \end{equation}
for $l\geqslant3$. Clearly, $(J,\varrho,v,\Theta)|_{t=T_{l-1}}=(\ J_{l-1},\varrho_{l-1}, v_{l-1},\Theta_{l-1})$ fulfills the conditions on the initial data stated in Theorem \ref{1dlg-thm1}. Taking $T_{l-1}$ as the initial time, one gets that there exists a strong solution $(J,\varrho,v,\Theta)$ to the system (\ref{1dlg-E16})-(\ref{1dlg-E17})  on $\mathbb R\times[T_{l-1},T_l]$ with the help of Theorem \ref{1dlg-thm1} and Proposition \ref{5dlg-P5.2}, where $T_l$ is given in (\ref{5dlg-E45}). Moreover, one obtains that
\begin{eqnarray*}
&&\bigg(\int\varrho_{l-1}^{\alpha}\left({\varrho_{l-1}}v^2+{\varrho_{l-1}}\Theta+\varrho_{l-1}^2\Theta^2
+{\varrho_{l-1}}v_y^2+{\varrho_{l-1}}\Theta_y^2+\frac{1}{J_{l-1}}|\frac{v_y}{J}|^qJ\right.\\
&&\ \ \ \ \ \ \ \ \ \ \ \ \ \ \ \ \ \left.+\frac{\varrho_{l-1}^{\alpha}}{J_{l-1}}|\frac{\Theta_y} J|^pJ+J_y^2\right)dy\bigg)(t)\\
&&\leqslant CH_l(t)\bigg(1-\frac{q+1}{q-1}H_l^{\frac{q+1}{q-1}}(t)(t-{T_{l-1}})\bigg)^{-\frac{q-1}{q+1}}.
    \end{eqnarray*}
Furthermore,
\begin{eqnarray*}
&&\sup\limits_{s\in[T_{l-1},t]}\bigg(\int\varrho_{l-1}^{\alpha}\left({\varrho_{l-1}}v^2+{\varrho_{l-1}}\Theta+\varrho_{l-1}^2\Theta^2
+{\varrho_{l-1}}v_y^2+{\varrho_{l-1}}\Theta_y^2+\frac{1}{J_{l-1}}|\frac{v_y}{J}|^qJ\right.\\
&&\ \ \ \ \ \ \ \ \ \ \ \ \ \ \ \ \ \left.+\frac{\varrho_{l-1}^{\alpha}}{J_{l-1}}|\frac{\Theta_y} J|^pJ+J_y^2\right)dy\bigg)(s)\\
&&+\int_{T_{l-1}}^t\int\varrho_{l-1}^{\alpha}\left(\frac{1}{J_{l-1}}|\frac{v_y}{J}|^qJ+\frac{\varrho_{l-1}}{J_{l-1}}|\frac{\Theta_y} J|^pJ+\frac{1}{J_{l-1}}|\frac{v_y}{J}|^{q-2}\frac { v_{yy}^2}J+\frac{1}{J_{l-1}}|\frac{\Theta_y} J|^{p-2}\frac {\Theta_{yy}^2}J\right.\\
&&\ \ \ \ \ \ \ \ \ \ \ \ \ \ \ \ \ \ \ \ \ \ \left.+\varrho_{l-1}^3\Theta_t^2+{\varrho_{l-1}}v_t^2\right)dyds\\
&&\leqslant C\bigg(H_l(t)+\int_{T_{l-1}}^tH_l^{\frac{2q} {q-1}}(s)\bigg(1-\frac{q+1}{q-1}H_l^{\frac{q+1}{q-1}}(s)(s-T_{l-1})\bigg)^{-\frac{2q}{q+1}}ds\bigg)
\end{eqnarray*}
holds for all $t\in[{T_{l-1}},{T_l}]$, where $\delta_{l}(t)=\inf\limits_{y\in{\mathbb R}}J(y,t)$ on $[T_{l-1},T_l]$ and
\begin{eqnarray*}
&&H_l(t)=\int\varrho_{l-1}^{\alpha}\Big({\varrho_{l-1}}v_{l-1}^2+{\varrho_{l-1}}\Theta_{l-1}
+\varrho_{l-1}^2\Theta_{l-1}^2+{\varrho_{l-1}}v_{l-1,y}^2+{\varrho_{l-1}}\Theta_{l-1,y}^2+|\frac{v_{l-1,y}} {J_{l-1}}|^q\\
&&\ \ \ \ \ \ \ \ \ \ \ \ +\varrho_{l-1}^2|\frac{\Theta_{l-1,y}} {J_{l-1}}|^p+ J_{l-1,y}^2\Big)dy
\\&&\ \ \ \ \ \ \ \ \ \ \ \ +C({\sup\limits_{y\in\mathbb R}{\varrho_{l-1}}})^{-\min\{\frac{(3q-2)\alpha+q}{2-q},\frac{10\alpha}{q-1}\}}
\left(\|(\varrho_{l-1}^{\alpha})_y\|_{L^p}\|(\varrho_{l-1}^{\alpha})_y\|_{L^\infty}\right)^{\frac{16q}{2-q}}
\int^t_{T_1}\left(\delta_{l}(s)\right)^{-\frac{24}{2-q}}ds,\\
&&M_l(t)=\frac{C}{q}\bigg(q\sup\limits_{y\in\mathbb R}J_{l-1}+H_l(t)+\int_{T_{l-1}}^tH_l^{\frac{2q}{q-1}}(s)\Big(1-\frac{q+1}{q-1}H_l^{\frac{q+1}{q-1}}(s)s\Big)^{-\frac{2q}{q-1}}ds\bigg),\\
&&F_l(t)={(\sup\limits_{y\in\mathbb R}\varrho_{l-1})}^{-\frac{\alpha}{q-1}}M_l^{-\frac{q+1}{q-1}}(t)\exp\bigg(-\frac{q+1}{q}{(\sup\limits_{y\in\mathbb R}\varrho_{l-1})}^{-\frac{\alpha}{q}}t\bigg),
\end{eqnarray*}
on $[T_{l-1},T_l]$. In order to find $T_l$,  one deduces from the steps to choose $T_2$ and get that $T_l$ satisfies the condition
\begin{equation}\label{5dlg-E45}
T_l-T_{l-1}>t\left(\frac{1} {2\widetilde\eta_l^{\frac {(q-1)^2}{2(q+1)}}}\right)-T_{l-1}>g^{\frac{q+1}{q-1}}(2)g^{-\frac{q+1}{q-1}}(2\widetilde\eta_l^{\frac {(q-1)^2}{2(q+1)}})=h^{\frac{q+1}{q-1}}(2,\widetilde\eta_l)
    \end{equation}
for some $\widetilde\eta_l>1$, where $g(k)$ and $h(k,\eta)$ are given in (\ref{5dlg-E39}) and (\ref{5dlg-E43}) respectively. Therefore, the strong solution $(J,\varrho,v,\Theta)$ is extended uniquely on $\mathbb R\times[0,T_l]~(l=2,3,\cdots),$ by bonding the  strong solution on $\mathbb R\times[0,T_{l-1}]$ and the  strong solution on $\mathbb R\times[T_{l-1},T_l]$ together at time $T_{l-1}$. Moreover, the extended step $T_l-T_{l-1}$ is with the following property
\begin{equation}\label{5dlg-E46}
T_l-T_{l-1}>h^{\frac{q+1}{q-1}}(2,\widetilde\eta_l).
    \end{equation}
It is easy to get that
\begin{eqnarray}\label{5dlg-E47}
&&\frac{1}{\bigg((k-\frac{1}{\eta^{\frac {(q-1)^2}{2(q+1)}}})^{\frac{q-1}{q+1}}+\frac{q-1}{q+1}(\eta^{\frac {(q-1)^2}{2(q+1)}})^{\frac{4q}{(q-1)(q+1)}}k^{\frac{q+1}{q-1}}(\eta^{{\frac {(q-1)^2}{2(q+1)}}}k-1)^{\frac{2q}{q+1}}\bigg)^{\frac{q+1}{q(q-1)}}}\nonumber\\
&&\geqslant
\bigg({\frac{q+1}{2q}}\bigg)^{\frac{q+1}{q(q-1)}}\frac{1}{k^{\frac{1}{q}}\Big(1+k^{\frac{q^2+4q-1}{(q-1)^2}}(\eta^{\frac {(q-1)^2}{2(q+1)}})^{\frac{2(q+1)}{(q-1)^2}}\Big)}
\end{eqnarray}
holds for any fixed $\eta>1$ and $k\in\mathbb Z^+$ with $k>2$. So, the integral
\begin{eqnarray}\label{5dlg-E48}
&&\int_1^\infty h^{\frac{q+1}{q-1}}(2,\eta)d\eta\\
&&=\int_1^\infty \bigg(\frac{1+\frac{q-1}{q+1}2^{\frac{q+1}{q-1}}}{(2-\frac{1}{\eta^{\frac {(q-1)^2}{2(q+1)}}})^{\frac{q-1}{q+1}}+\frac{q-1}{q+1}(\eta^{\frac {(q-1)^2}{2(q+1)}})^{\frac{4q}{(q-1)(q+1)}}2^{\frac{q+1}{q-1}}(2\eta^{{\frac {(q-1)^2}{2(q+1)}}}-1)^{\frac{2q}{q+1}}}\bigg)^{\frac{q+1}{q(q-1)}}d\eta\nonumber\\
&&\geqslant\bigg(1+\frac{q-1}{q+1}2^{\frac{q+1}{q-1}}\bigg)^{\frac{q+1}{q(q-1)}}
\bigg({\frac{q+1}{2q}}\bigg)^{\frac{q+1}{q(q-1)}}\frac{1}{2^{\frac{1}{q}}}\int_1^\infty\frac{1}{1+2^{\frac{q^2+4q-1}{(q-1)^2}}(\eta^{\frac {(q-1)^2}{2(q+1)}})^{\frac{2(q+1)}{(q-1)^2}}}d\eta=+\infty.\nonumber
    \end{eqnarray}
Thus, it is deduced from (\ref{5dlg-E46}) that
\begin{equation*}
\sum_{l=1}^{\infty}(T_l-T_{l-1})=\infty.
\end{equation*}
Therefore, the system (\ref{1dlg-E16})-(\ref{1dlg-E17}) admits a unique strong solution $(J,\varrho,v,\Theta)$ on $\mathbb R\times[0,T]$ for any $T\in(0,\infty)$.

\section*{Acknowledgement}

The work of Li Fang was supported in part  by the National Natural Science Foundation of China (Grant no. 11501445). The work of Aibin Zang  was partially supported by the National Science Foundation of China (Grant no. 12261039), and Jiangxi Provincial Natural Science Foundation  (Grant no. 20224ACB201004).


\begin{thebibliography}{sl}
\bibitem{Abbatiello-2020}
A. Abbatiello, E. Feireisl, A. Novotn\'{y}, Generalized solutions to models of compressible viscous fluids, Discrete Contin. Dyn. Syst. 41(1) (2021) 1-28.

\bibitem{Bohme-1987}
G. B\"{o}hme, Non-Newtonian Fluid Mechanics, Translations of North-Holland Series in Applied Mathematics and Mechanics, 31. North-Holland Publishing Co., Amsterdam, 1987.

\bibitem{Bellout-1994}
H. Bellout, F. Bloom, J. Ne\v{c}as, Young measure-valued solutions for non-Newtonian incompressible fluids, Commun. Partial Differential Equations, 19 (1994) 1763--1803.

\bibitem{Chhabra-2008}
R.P. Chhabra, J.F. Richardson, Non-Newtonian Flow and Applied Rheology (Second Edition). Oxford, 2008.


\bibitem{Diening-2002}L. Diening, Theoretical and numberical results for electrorheological fluids, PhD thesis, Univ, Freiburg im Breisgau, Mathematische Fakult\"{a}t, 156 p., 2002.

\bibitem{Diening-2010}
L. Diening, M. R\.{u}\v{z}i\v{c}ka, J. Wolf, Existence of weak solutions for unsteady motion of generalized Newtonian fluids, Ann. Scuola Norm. Sup. Pisa., 9 (2010) 1-46.


\bibitem{Feireisl-2004}
E. Feireisl, Dynamics of viscous compressible fluids, Oxford University Press, Oxford, 2004.

\bibitem{Fang-Guo-2012}
L. Fang, Z.H. Guo, Analytical solutions to a class of non-Newtonian fluids with free boundaries, J. Math. Phys, 53(2012), 103701.

\bibitem{Fang-Li-2015}
L. Fang, Z.L. Li, On the existence of local classical solution for a class of one-dimensional compressible non-Newtonian fluids, Acta Math. Sci. Ser. B (English Ed)., 35(2015), 157-181.

\bibitem{Fang-Zang-2023}
L. Fang, A.B. Zang, Global existence of strong solutions to the Cauchy problem for a one-dimensional  compressible non-Newtonian fluid, J. Math. Fluid Mech. (2023), 25:15.

\bibitem{Figueredo-2003}R. Figueredo, E. Sabadini, Firefighting foam stability, the effect of the drag reducer poly(ethylene) oxide, 2003, 215(1-3): 77-86.


\bibitem{Guo-Shang-2006}
B.L. Guo, Y.D. Shang, Dynamics of non-Newtonian fluid, National Defence Industry Press, Beijing, 2006.

\bibitem{Guo-Zhu-2002}
B.L. Guo, P.C. Zhu, Partial regularity of suitable weak solutions to the system of the incompressible non-Newtoniana fluids, J. Differential Equations, 178 (2002) 281-297.


\bibitem{Kalousek-Macha-Necasova-2020}
M. Kalousek, V. M\'{a}cha, \u{S}. Ne\v{c}asov\'{a}, Local-in-time existence of strong solutions to a class of compressible non-Newtonian Navier-Stokes equations, Math. Ann. 384(3-4) (2022) 1057-1089.

\bibitem{Ladyzhenskaya-1967}O A. Ladyzhenskaya, New equation for description of motion of viscous incompressible fluids and solvability in the
large boundary value problems for them. Proc Steklov Inst Math, 1967, 102: 95-118.

\bibitem{Ladyzhenskaya-1969}
O.A. Ladyzhenskaya, The mathematical theory of viscous incompressible flow 2nd ed, Gordon and Breach, New York, (1969).


\bibitem{Ladyzhenskaya-1970}
O.A. Ladyzhenskaya, Mathematical questions of the dynamics of a viscous incompressible fluid, Second revised and supplemented edition Izdat, Moscow, 1970.


\bibitem{Li-Xin-2020}
J.K. Li, Z.P. Xin, Entropy bounded solutions to the one-dimensional compressible Navier-Stokes equations with zero heat conduction and far field vacuum, Advances in Mathematics, 361(2020), 106923.

\bibitem{Li-Xin-2021}
J.K. Li, Z.P. Xin, Entropy-bounded solutions to the one-dimensional heat conductive compressible Navier-Stokes equations with far field vacuum, Communications on Pure and Applied Mathematics, (2021).

\bibitem{Lions-1969}
J.L. Lions, Quelques methods de resolutions des problemes aux limites non linaires, Gauthier-Villars, Paries, 1969.


\bibitem{Lindqvist-2006}P. Lindqvist, Notes on the p-Laplace equation, University of Jyv\"{a}skyl\"{a}, Jyv\"{a}skyl\"{a}, 2006.



\bibitem{Mamontov-1999}
A.E. Mamontov, On the global solvability of the multidimensional Navier-Stokes equations of a nonlinearly viscous fluid I, Sibirsk. Mat. Zh. 40(2) (1999) 408-420.

\bibitem{Mamontov-1999-1}
A.E. Mamontov, On the global solvability of the multidimensional Navier-Stokes equations of a nonlinearly viscous fluid II, Sibirsk. Mat. Zh. 40(3) (1999) 635-649.


\bibitem{Malek-1997}
J. M\'{a}lek, J. Ne\^{c}as, M. Rokyta, M. Ru\v{z}i\v{c}ka, Weak and Measure-Valued Solution to Evolutionary PDEs. Applied Mathematics and Mathematical Computation, 13. Chapman and Hall, London (1996)

\bibitem{Bartlomiej-Aneta-2018}
B. Matejczyk, A. Wr\'{o}blewska-Kami\'{n}ska, Unsteady flows of heat-conducting non-Newtonian fluids in Musielak-Orlicz spaces, Nonlinearity, 2018

\bibitem{Mowla-2006}D. Mowla,  A. Naderi, Experimental study of drag reduction by a polymeric additive in slug two-phase flow of crude oil and air in horizontal pipes, Chemical Engineering Science, 2006, 61(5): 1549-1554.


\bibitem{Necasova-1993}
\u{S}. Ne\v{c}asov\'{a}, A. Novotn\'{y}, Measure-valued solution for non-Newtonian compressible isothermal monopolar fluid, Acta Appl. Math., 37(1-2) (1994) 109-128, 1994. 

\bibitem{Necasova-2001}
\u{S}. Ne\v{c}asov\'{a}, P. Penel, $L^2-$decay for weak solution to equations of non-Newtonian incompressible fluids in the whole space, Nonlinear Anal., 47 (2001) 4181-4192.



\bibitem{Yuan-Si-Feng-2019}
H.J. Yuan, X. Si, Z.S. Feng, Global strong solutions of a class of non-Newtonian fluids with small initial energy, J. Math. Anal. Appl., 474 (2019), 72-93.

\bibitem{Yuan-Xu-2008}
H.J. Yuan, X.J. Xu, Existence and uniqueness of solutions for a class of non-Newtonian fluids with singularity and vacuum, J. Differential Equations, 245 (2008) 2871-2916.

\bibitem{Zang-2018}
A.B. Zang, Existence of weak solutions for non-stationary flows of fluids with shear thinning dependent viscosities under slip boundary conditions in half space, Sci. China Math., 61 (2018) 727-744.

\bibitem{Zhikov-2009}
V.V. Zhikov, S.E. Pastukhova, On the solvability of the Navier-Stokes system for a compressible non-Newtonian fluid(Russian), Dokl. Akad. Nauk, 427 (2009) 303-307; translation in Dokl. Math., 80 (2009) 511-515.


































%














\end{thebibliography}
\end{document}